\documentclass{article}
\usepackage{amssymb}
\usepackage{amsmath}
\usepackage{amsfonts}
\usepackage{amscd}
\usepackage{epsfig}
\usepackage{amsmath,amstext,amsthm,amsbsy,amssymb}

\newcommand{\be}{{\bf e}}
\newcommand{\bc}{{\mathbb C}}
\newcommand{\br}{{\mathbb R}}

\newcommand{\bh}{{\mathbb H}}
\newcommand{\bi}{{\bf i}}

\newtheorem{thm}{Theorem}[section]
\newtheorem{lem}{Lemma}[section]
\newtheorem{pro}{Proposition}[section]
\newtheorem{cor}{Corollary}[section]

\newtheorem{defi}{Definition}[section]
\newtheorem{exam}{Example}[section]

\setlength{\oddsidemargin}{-0.12in}
 \setlength{\textwidth}{6.98in}
\setlength{\textheight}{9.18 in}
 \setlength{\topmargin}{-.5in}

\begin{document}

\title{The Moore-Penrose Inverses of Clifford Algebra $C\ell_2$}
\author{Rong lan ZHENG, Wen sheng CAO, Hui hui CAO }
\date{}
\maketitle

\bigskip
{\bf Abstract.} \,\, In this paper, we introduce a ring  isomorphism between the  Clifford algebra $C\ell_2$ and a ring of matrices, and represent the elements in $C\ell_2$ by real matrices. By  such a ring isomorphism, we  introduce the concept of the Moore-Penrose inverse in  Clifford algebra $C\ell_2$. we solve the linear equation $axb=d$, $ax=xb$ and $ax=\overline{x}b$.  We also obtain necessary and sufficient conditions for two numbers in $C\ell_2$ to be similar and pseudosimilar.

{\bf Mathematics Subject Classification.}\ \ Primary 15A24; Secondary  15A33.

{\bf Key words}\,\, Moore-Penrose inverse; linear equation; similar; pseudosimilar

\section{Introduction}

\qquad Based on Grassmann's exterior algebra, Clifford had earlier considered the multiplication rules of the Clifford algebra $C\ell_{0,n}$ in 1878. And Clifford created the multiplication rules of the Clifford algebra $C\ell_n$ in 1882. Finally Clifford generated Clifford algebra $C\ell_{p,q}$. Clifford algebra, also known as geometric algebra, has a wide range of applications in geometry and physics.

\begin{defi} The Clifford algebra  $C\ell_{p,q}$ with $p+q=n$  is generated by the orthonormal basis $\{i_1,\cdots,i_n\}$ of $\br^{p,q}$ with  the multiplication rules\cite{Lou}
\begin{equation}\label{mulrul}i_t^2=1,\,1\le t\le p,\,\,\,\,i_t^2=-1,\,p<t\le n,\,\,\,\,i_ti_m=-i_mi_t,\,t<m.\end{equation}
\end{defi}
Let $\br$, $\bc$, $\bh$ and $\bh_s$  be  respectively the real numbers, the complex numbers, the quaternions and the split quaternions.  Then we have $\bc\cong C\ell_{0,1}$, $\bh\cong C\ell_{0,2}$ and $\bh_s\cong C\ell_{1,1}$.

In this paper we focus on the Clifford algebra $C\ell_2$. The Clifford algebra $C\ell_2$ is a 4-dimensional real linear space with basis elements
\begin{equation}\label{basise}
	\be_0=1,\,\be_1=i_1,\,\be_2=i_2,\,\be_3=i_1i_2,
\end{equation}
which have the multiplication table
\begin{table}[h]\label{tab1}
	\centering
	\caption{Multiplication table for the Clifford algebra $C\ell_2$}
	\begin{tabular}{c|ccc}
		& $\be_1$ &  $\be_2$  &  $\be_3$\\
		\hline
		$\be_1$ &1 & $\be_3$ & $\be_2$ \\
		$\be_2$& -$\be_3$ &1& -$\be_1$ \\
		$\be_3$& -$\be_2$ & $\be_1$ & -1
	\end{tabular}
\end{table}

According to the multiplication rules, we have the following proposition.
 \begin{pro}
 \label{subalgebra}
	\item[(1)] $\bc=span\{1,\be_3\}$ and
	\begin{equation} C\ell_2=\bc+\bc \be_2.\end{equation}
	  and therefore each $a=a_0+a_1\be_1+a_2\be_2+a_3\be_3\in C\ell_2$ can be represented by
	\begin{equation}
			a=z_1+z_2\be_2,
	\end{equation}
	where $z_1=a_0+a_3\be_3$, $z_2=a_2+a_1\be_3\in \bc.$
	\item[(2)] Let $Cent(C\ell_2)=\{a\in C\ell_2:xa=ax,\forall x\in C\ell_2\}.$ we have   \begin{equation}Cent(C\ell_2)=\br.	\end{equation} 	
\end{pro}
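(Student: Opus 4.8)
The plan is to extract two structural facts from Table~1 and bootstrap the whole proposition from them. First, reading off $\be_3^2=-1$ shows that $span\{1,\be_3\}$ is closed under multiplication and, via $\be_3\mapsto i$, is a real subalgebra isomorphic to $\bc$; this is what is meant by $\bc=span\{1,\be_3\}$. Second, $\be_2\be_3=-\be_1=-\be_3\be_2$, so $\be_2$ anticommutes with $\be_3$, and hence by linearity one obtains the conjugation identity
\begin{equation}
	\be_2 z=\overline{z}\,\be_2\qquad\text{for all } z=c_0+c_1\be_3\in\bc,
\end{equation}
where $\overline{\,\cdot\,}$ denotes complex conjugation on $\bc$. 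These two observations drive everything.

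For part (1): from $\be_3\be_2=\be_1$ we get $\bc\be_2=span\{\be_2,\be_3\be_2\}=span\{\be_1,\be_2\}$, which together with $\bc=span\{1,\be_3\}$ spans all of $C\ell_2$; a dimension count ($2+2=4$) shows the sum is direct, so $C\ell_2=\bc\oplus\bc\be_2$ as real vector spaces. Regrouping $a=a_0+a_1\be_1+a_2\be_2+a_3\be_3$ and using $(a_2+a_1\be_3)\be_2=a_2\be_2+a_1\be_3\be_2=a_2\be_2+a_1\be_1$ gives $a=(a_0+a_3\be_3)+(a_2+a_1\be_3)\be_2=z_1+z_2\be_2$ with $z_1,z_2\in\bc$ as claimed; uniqueness of $z_1,z_2$ follows from directness of the sum.

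For part (2): the inclusion $\br\subseteq Cent(C\ell_2)$ is immediate. For the converse, by bilinearity of the product it suffices that $a$ commute with $\be_2$ and $\be_3$ (these generate $C\ell_2$, since $\be_2\be_3=-\be_1$). Write $a=z_1+z_2\be_2\in Cent(C\ell_2)$. Using commutativity of $\bc$ and the conjugation identity,
\begin{equation}
	\be_3 a=z_1\be_3+z_2\be_3\be_2,\qquad a\be_3=z_1\be_3+z_2\be_2\be_3=z_1\be_3-z_2\be_3\be_2,
\end{equation}
so $\be_3 a=a\be_3$ forces $2z_2\be_3\be_2=0$, i.e. $z_2\be_1=0$; since $w\mapsto w\be_1$ is injective on $\bc$ (check $z_2\be_1=0\Rightarrow z_2=0$ coordinatewise) we conclude $z_2=0$. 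Thus $a=z_1\in\bc$, and then $\be_2 a=\overline{z_1}\,\be_2$ must equal $a\be_2=z_1\be_2$, which (again since right multiplication by $\be_2$ is injective) forces $z_1=\overline{z_1}$, i.e. $z_1\in\br$. Hence $Cent(C\ell_2)=\br$.

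There is no genuine obstacle here; the only points needing care are keeping the sign in the conjugation identity $\be_2 z=\overline{z}\,\be_2$ correct (this is exactly where the anticommutation of $\be_2$ and $\be_3$ enters) and noting that it suffices to test commutativity against a generating set rather than all of $C\ell_2$.
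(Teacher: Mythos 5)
Your proof is correct. The paper offers no proof of this proposition (it is stated as an immediate consequence of the multiplication table), and your verification --- reading off $\be_3^2=-1$ and the anticommutation $\be_2\be_3=-\be_3\be_2$ from Table~1, then checking the decomposition and testing centrality against the generators $\be_2,\be_3$ --- is exactly the direct computation the paper implicitly has in mind, with all signs and injectivity claims checked correctly.
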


\begin{defi}\label{defi1.2}
To each $a=a_0+a_1\be_1+a_2\be_2+a_3\be_3\in C\ell_2$ where $a_i\in \br,i=0,\cdots,3$, we define the following associated notations and maps of $a$:

the conjugate of $a$:\ \ $\bar{a}=a_0-a_1\be_1-a_2\be_2-a_3\be_3;$

the prime of $a$:\ \ $a'=a_0+a_1\be_1+a_2\be_2-a_3\be_3;$

the real part of $a$:\ \ $Cre(a)=(a+\overline{a})/2=a_0;$

the imaginary part of $a$:\ \ $Cim(a)=(a-\overline{a})/2=a_1\be_1+a_2\be_2+a_3\be_3;$

the modulus of $a$: $|a|=\sqrt{a_0^2+a_1^2+a_2^2+a_3^2}$;

the map $H$: $C\ell_2\longmapsto\br$: $H_a=\overline{a}a=a\overline{a}=a_0^2-a_1^2-a_2^2+a_3^2$;

the map $G$: $C\ell_2\longmapsto\br$: $G(a)=\Im(a)^2=a_1^2+a_2^2-a_3^2.$

\end{defi}

It is easy to verify the following proposition.
\begin{pro}\label{pro1.2}
\begin{itemize}
\item[(1)] Let $a,b\in C\ell_2$. Then $\overline{ab}=\bar{b}\bar{a},\,\,(ab)'=b'a',\,\,
H_{ab}=H_aH_b,\, H_{a'}=H_a$.
\item[(2)] Let $z \in \bc$. Then
   $\be_2 z=\overline{z}\be_2$.
    \item[(3)]  Let $a=z_1+z_2\be_2$, $z_1, z_2 \in \bc$. Then $H_a=|z_1|^2-|z_2|^2$, where $|z_1|^2=a_0^2+a_3^2,\,|z_2|^2=a_1^2+a_2^2$.

\end{itemize}
\end{pro}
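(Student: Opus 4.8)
The plan is to derive all three items from two facts already in place: the multiplication rules \eqref{mulrul} (equivalently, the multiplication table for $C\ell_2$) and the closed coordinate formula $H_a=a_0^2-a_1^2-a_2^2+a_3^2$ recorded in Definition~\ref{defi1.2}. None of the items needs a new idea; the entire content is bookkeeping of signs, and I would organize the argument to keep that bookkeeping minimal.

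For part (1), I would first prove $\overline{ab}=\bar b\,\bar a$ and $(ab)'=b'a'$. Both conjugation and priming are $\br$-linear, so by bilinearity it is enough to check these two identities when each of $a,b$ ranges over the basis $\{\be_0,\be_1,\be_2,\be_3\}$. Conjugation fixes $\be_0$ and negates $\be_1,\be_2,\be_3$, and priming fixes $\be_0,\be_1,\be_2$ and negates $\be_3$; for each of the sixteen products $\be_i\be_j$ one reads the product off the table and verifies that the induced sign is the one predicted by the right-hand side — the only thing to respect is the reversal of order. With $\overline{ab}=\bar b\,\bar a$ available, multiplicativity of $H$ is then purely formal:
\[
H_{ab}=\overline{ab}\,(ab)=\bar b\,(\bar a a)\,b=\bar b\,H_a\,b=H_a\,(\bar b b)=H_aH_b,
\]
where we used $\bar a a=H_a\in\br$ together with the centrality of $\br$ in $C\ell_2$ (Proposition~\ref{subalgebra}(2)). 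Finally $H_{a'}=H_a$ is immediate from the coordinate formula, since $a'$ has coordinates $(a_0,a_1,a_2,-a_3)$ and $(-a_3)^2=a_3^2$.

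For part (2), write $z=a_0+a_3\be_3\in\bc$; then $\be_2 z=a_0\be_2+a_3\,\be_2\be_3$ and $\bar z\,\be_2=a_0\be_2-a_3\,\be_3\be_2$, and since $\be_2\be_3=-\be_1=-\be_3\be_2$ both sides equal $a_0\be_2-a_3\be_1$. For part (3), substitute $z_1=a_0+a_3\be_3$ and $z_2=a_2+a_1\be_3$ into the coordinate formula:
\[
H_a=a_0^2-a_1^2-a_2^2+a_3^2=(a_0^2+a_3^2)-(a_1^2+a_2^2)=|z_1|^2-|z_2|^2,
\]
which is exactly the claim; alternatively one can expand $H_a=\bar a a$ directly from $a=z_1+z_2\be_2$, using part (2) to move $\be_2$ past complex scalars, and recover the same identity as a consistency check. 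The one place a slip is plausible is the sign discipline in part (1): keeping the factors in reversed order in $\bar b\,\bar a$ and $b'a'$, and using table entries such as $\be_2\be_1=-\be_3$ and $\be_3\be_3=-1$ with the right orientation. Once those basis-level checks are done cleanly, everything else falls out by linearity and the algebra above, so I do not expect any genuine obstacle.
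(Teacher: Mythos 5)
Your proof is correct: the bilinearity reduction to basis elements for the two anti-automorphism identities, the formal derivation of $H_{ab}=H_aH_b$ from $\overline{ab}=\bar b\bar a$ together with $\bar aa=H_a\in\br=Cent(C\ell_2)$, and the coordinate computations for parts (2) and (3) all check out against the multiplication table. The paper offers no proof at all (it simply states the proposition is ``easy to verify''), so your write-up is exactly the routine verification the authors leave to the reader, with no divergence in approach to report.
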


The Moore-Penrose inverse plays an important role in the study of Clifford algebra. Cao\cite{cao}obtained the The Moore-Penrose inverse of $C\ell_{0,3}$ and studied the similarity and consimilarity in $C\ell_{0,3}$. Ablamowicz\cite{abla},  Cao and Chang\cite{caochang} used different algebra isomorphisms to find the Moore-Penrose inverse of split quaternions, which can be thought of as $C\ell_{1,1}$. Yildiz and Kosal\cite{onder}studied the semisimilarity and comsimilarity.

In this paper, we focus on the concepts of the Moore-Penrose inverse, similarity and pseudosimilar in Clifford algebra $C\ell_2$.

The paper is organized as follows.  In Section(\ref{matrixrep}),we will introduce a ring isomorphism between the Clifford algebra
$C\ell_2$ and a ring of matrices. By such a ring isomorphism, we will represent $a \in C\ell_2$ by a matrix L(a). In Section (\ref{mpinvsec}), by such a ring isomorphism, we introduce the concept of the Moore-Penrose inverse in Clifford algebra $C\ell_2$. In section (\ref{linear}), we solve the linear equation
$axb =d$. In Section (\ref{simsec}), we will obtain some necessary and sufficient conditions for two numbers in $C\ell_2$ to be similar and pseudosimilar.

\section{Ring isomorphism}\label{matrixrep}

\qquad Let $A^T$ be  the transpose of matrix $A$. Denote $\overrightarrow{x}=(x_0,x_1,x_2,x_3)^T\in \br^4$ for $x=x_0+x_1\be_1+x_2\be_2+x_3\be_3\in C\ell_2$. Each $a\in  C\ell_2$ define two maps form  $C\ell_2$ to $C\ell_2$ by
\begin{equation}\label{isomorph1}
	L_a:x\to ax
\end{equation}
and
\begin{equation}\label{isomorph2}
	R_a:x\to xa.
\end{equation}
Based on  multiplication rules for the basis of $C\ell_2$, the multiplication of two elements can be represented by an ordinary matrix-by-vector product. Gro{\ss}, Trenkler and Troschke\cite{gros} use this  method to study some properties of quaternions.  In such a way, we can verify the  following proposition.

\begin{pro} \label{promat}
For $a,x\in C\ell_2$, we  have
\begin{equation}\label{isomorph3}
	\overrightarrow{ax}=L(a)\overrightarrow{x}
\end{equation}
and
\begin{equation}\label{isomorph4}
	\overrightarrow{xa}=R(a)\overrightarrow{x},
\end{equation}
where
\begin{equation} \label{la}
	L(a)=\left(
	\begin{array}{cccccccc}
	a_0 &a_1  &a_2  &-a_3  \\
    a_1 &a_0   &a_3  & -a_2  \\
    a_2 &-a_3  &a_0   &a_1  \\
    a_3 &  -a_2 &a_1   & a_0
	\end{array}
	\right)
\end{equation}
and
\begin{equation} \label{ra}
	R(a)=CL(a)^TC=\left(\begin{array}{cccc}
	a_0 &a_1  &a_2  &-a_3  \\
	a_1 &a_0   & -a_3  & a_2  \\
	a_2 & a_3  &a_0   &-a_1  \\
	a_3 &  a_2 &-a_1   & a_0
\end{array}\right).
\end{equation}
where $C=diag(1,1,1,-1)$.
\end{pro}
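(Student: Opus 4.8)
The plan is to verify the two matrix identities \eqref{isomorph3} and \eqref{isomorph4} by direct computation from the multiplication table, and then check the claimed relation $R(a)=CL(a)^TC$ as a purely mechanical consequence. Since the maps $L_a$ and $R_a$ are $\br$-linear in $x$, it suffices to compute their action on the basis $\{\be_0,\be_1,\be_2,\be_3\}$; the columns of the representing matrices are then exactly the coordinate vectors $\overrightarrow{a\be_j}$ and $\overrightarrow{\be_j a}$ respectively. Concretely, I would write $a=a_0\be_0+a_1\be_1+a_2\be_2+a_3\be_3$ and expand $a\be_j$ using Table~1 (extended by $\be_0$ acting as identity), collecting coefficients of each $\be_k$.

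First I would do the left-multiplication case. For $j=0$ we trivially get the column $(a_0,a_1,a_2,a_3)^T$. For $j=1$, using $\be_1\be_1=1$, $\be_2\be_1=-\be_3$, $\be_3\be_1=-\be_2$, we get $a\be_1=a_1\be_0+a_0\be_1-a_3\be_2-a_2\be_3$, which must be read off as a column; comparing with \eqref{la} one sees the ordering convention is that the $k$-th row, $j$-th column entry of $L(a)$ is the $\be_{k}$-coefficient of $a\be_{j}$ — so I must be careful to match the transpose/indexing convention actually used in \eqref{la}. I would similarly handle $j=2$ (using $\be_1\be_2=\be_3$, $\be_2\be_2=1$, $\be_3\be_2=\be_1$) and $j=3$ (using $\be_1\be_3=\be_2$, $\be_2\be_3=-\be_1$, $\be_3\be_3=-1$), and assemble $L(a)$. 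The right-multiplication matrix $R(a)$ is obtained the same way, now using the other half of each table entry (e.g. $\be_1\be_2=\be_3$ but $\be_2\be_1=-\be_3$), which is precisely where the sign differences between \eqref{la} and \eqref{ra} originate.

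Once both matrices are in hand, the identity $R(a)=CL(a)^TC$ with $C=\mathrm{diag}(1,1,1,-1)$ is checked by inspection: conjugating $L(a)^T$ by $C$ flips the sign of every entry in the last row and the last column except their common entry $(4,4)$, and one verifies entry-by-entry that this turns the transpose of \eqref{la} into \eqref{ra}. (Conceptually, this reflects that right multiplication by $a$ is the adjoint of left multiplication with respect to the indefinite form $H$, and $C$ is the Gram matrix of that form on the basis — but for the proof a direct comparison is enough.)

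The computation is entirely routine; the only real pitfall is bookkeeping, namely keeping the row/column ordering consistent with the explicit matrices in \eqref{la} and \eqref{ra} and not confusing $a\be_j$ with $\be_j a$. So \textbf{the main obstacle} is purely notational care rather than any mathematical difficulty; I would organize the sixteen coefficient computations in a small table mirroring Table~1 to make the matching transparent.
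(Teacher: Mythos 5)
Your proposal is correct and matches the paper's intent exactly: the paper states this proposition without proof as a routine verification, and computing the columns of $L(a)$ and $R(a)$ as $\overrightarrow{a\be_j}$ and $\overrightarrow{\be_j a}$ from the multiplication table, then checking $R(a)=CL(a)^TC$ entry-by-entry, is precisely that verification. One small caveat on your parenthetical aside (which you rightly do not rely on): $C=\mathrm{diag}(1,1,1,-1)$ is not the Gram matrix of $H$ (that would be $D=\mathrm{diag}(1,-1,-1,1)$); rather $C$ represents the reversion $a\mapsto a'$, and the identity follows from $\overrightarrow{xa}=C\,\overrightarrow{(a'x')}=CL(a')C\overrightarrow{x}=CL(a)^TC\overrightarrow{x}$.
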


\begin{pro}\label{pro2.2}Let $D=diag(1,-1,-1,1)$. Then we have
\begin{equation} \label{bara} L(\bar{a})=DL(a)^TD \end{equation}
and
\begin{equation} \label{Rbara} R(\bar{a})=DR(a)^TD. \end{equation}
\end{pro}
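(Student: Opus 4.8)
The plan is to verify both identities by direct computation, exploiting the explicit matrix forms already available. For \eqref{bara}, I would start from the formula \eqref{la} for $L(a)$, compute $L(a)^T$ by transposing, then conjugate on both sides by $D=\mathrm{diag}(1,-1,-1,1)$. Conjugation by a diagonal $\pm 1$ matrix simply multiplies the $(i,j)$ entry by $D_{ii}D_{jj}$, i.e.\ it flips the sign of every entry lying in exactly one of rows/columns $2,3$ and leaves the rest unchanged. So I would tabulate the $16$ entries of $L(a)^T$, apply this sign pattern, and check the result equals $L(\bar a)$, which by Definition~\ref{defi1.2} is just $L$ evaluated with $(a_0,a_1,a_2,a_3)$ replaced by $(a_0,-a_1,-a_2,-a_3)$. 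Both sides are $4\times 4$ matrices with entries linear in the $a_i$, so matching them reduces to a handful of sign checks.

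For \eqref{Rbara}, I would like to avoid a second brute-force computation by reusing \eqref{bara} together with the relation $R(a)=CL(a)^TC$ from Proposition~\ref{promat}, where $C=\mathrm{diag}(1,1,1,-1)$. Then
\[
R(\bar a)=C\,L(\bar a)^T\,C = C\,\bigl(DL(a)^TD\bigr)^T\,C = C\,D\,L(a)\,D\,C,
\]
using that $D^T=D$. On the other hand $DR(a)^TD = D\,\bigl(CL(a)^TC\bigr)^T\,D = D\,C\,L(a)\,C\,D$. Since $C$ and $D$ are both diagonal they commute, so $CDL(a)DC = DCL(a)CD$, and the two expressions coincide. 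Thus \eqref{Rbara} follows formally from \eqref{bara} and the commutativity of the two diagonal matrices, with no further entrywise work.

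I do not expect a genuine obstacle here: the statement is a routine verification. The only place to be careful is the bookkeeping in \eqref{bara} — getting the sign pattern of conjugation by $D$ right and correctly reading off $L(\bar a)$ from the definition of the conjugate — and, in \eqref{Rbara}, making sure the transpose is distributed correctly over the product $DL(a)^TD$ (recalling $D^T=D$) and that one genuinely uses $CD=DC$. Once those are in place the proof is a two-line reduction plus a short table.
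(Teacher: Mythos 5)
Your proposal is correct: the entrywise check of $DL(a)^TD$ against $L(\bar a)$ works out (the sign pattern $D_{ii}D_{jj}$ applied to $L(a)^T$ reproduces exactly the matrix obtained by replacing $(a_1,a_2,a_3)$ with $(-a_1,-a_2,-a_3)$), and your derivation of \eqref{Rbara} from \eqref{bara} via $R(a)=CL(a)^TC$ and the commutativity of the diagonal matrices $C$ and $D$ is valid. The paper states this proposition without proof as a routine verification, so your argument matches the intended approach; the only added value is your tidy reduction of the second identity to the first, which spares a second entrywise computation.
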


\begin{pro}\label{pro2.3} Let $E_n$ be the identity matrix of order $n$. For $a,b\in C\ell_2, \,\lambda\in \br$, we have
\begin{itemize}
  \item[(1)]  $a=b\Longleftrightarrow L(a)=L(b)\Longleftrightarrow R(a)=R(b),\,\, L(\be_0)=R(\be_0)=E_4;$
 \item[(2)] $L(a+b)=L(a)+L(b),\,\, L(\lambda a)=\lambda L(a),\,\, L(a')=L(a)^T;$
  \item[(3)]$ R(a+b)=R(a)+R(b), \,\,R(\lambda a)=\lambda R(a),\,\, R(a')=R(a)^T;$
\item[(4)]  $ L(a)R(b)=R(b)L(a),\,\,R(ab)=R(b)R(a),\,\,L(ab)=L(a)L(b).$
\end{itemize}
\end{pro}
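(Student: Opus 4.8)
The plan is to derive each assertion either from a one-line inspection of the explicit matrices \eqref{la} and \eqref{ra} or from the associativity of multiplication in $C\ell_2$; nothing beyond Proposition \ref{promat} is needed.

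For part (1): that $a=b$ implies $L(a)=L(b)$ and $R(a)=R(b)$ is simply well-definedness of the maps $L$ and $R$; for the converse, if $L(a)=L(b)$ then equating the first columns in \eqref{la} forces $(a_0,a_1,a_2,a_3)=(b_0,b_1,b_2,b_3)$, i.e.\ $a=b$, and the first column of \eqref{ra} does the same for $R$. The identities $L(\be_0)=R(\be_0)=E_4$ come from substituting $a_0=1$, $a_1=a_2=a_3=0$ into \eqref{la} and \eqref{ra}.

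For parts (2) and (3): additivity and $\br$-homogeneity are immediate because every entry of $L(a)$ and of $R(a)$ is a linear form in $(a_0,a_1,a_2,a_3)$, so $L$ and $R$ are linear maps of real vector spaces. For the transpose formulas, note that passing from $a$ to $a'$ replaces $a_3$ by $-a_3$ while fixing $a_0,a_1,a_2$; carrying out this substitution in \eqref{la} reproduces exactly the transpose of \eqref{la}, giving $L(a')=L(a)^T$, and the analogous substitution in \eqref{ra} gives $R(a')=R(a)^T$ (alternatively one may combine $R(a)=CL(a)^TC$ with $L(a')=L(a)^T$ and $C^2=E_4$). This entrywise check is the only place where a short computation is genuinely unavoidable, and it is routine.

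For part (4): rather than multiplying $4\times 4$ matrices directly I would invoke associativity in $C\ell_2$. For every $x\in C\ell_2$, Proposition \ref{promat} yields
\[
L(ab)\overrightarrow{x}=\overrightarrow{(ab)x}=\overrightarrow{a(bx)}=L(a)\overrightarrow{bx}=L(a)L(b)\overrightarrow{x},
\]
and since $\overrightarrow{x}$ ranges over all of $\br^4$ (take it to run through the standard basis) we obtain $L(ab)=L(a)L(b)$; the same reasoning applied to $x(ab)=(xa)b$ gives $R(ab)=R(b)R(a)$, and applied to $(ax)b=a(xb)$ gives $R(b)L(a)=L(a)R(b)$. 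The only thing to watch is the bookkeeping of factor orders — $L$ is an algebra homomorphism whereas $R$ reverses products — but the correct order is dictated in each case by which side of $x$ the multiplication acts on, so I do not anticipate any real obstacle: the proposition is essentially a formal consequence of Proposition \ref{promat} together with the ring axioms of $C\ell_2$.
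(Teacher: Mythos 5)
Your proposal is correct and complete. The paper gives no proof of Proposition~\ref{pro2.3} at all --- it is stated as a routine consequence of the explicit matrices \eqref{la} and \eqref{ra} --- so there is nothing to compare against except the implicit expectation of direct entrywise verification. Your handling of (1)--(3) is exactly that verification (reading off the first column for injectivity, linearity of the entries, and the substitution $a_3\mapsto -a_3$ for the transpose identities), while for (4) your use of associativity, $L(ab)\overrightarrow{x}=\overrightarrow{(ab)x}=\overrightarrow{a(bx)}=L(a)L(b)\overrightarrow{x}$ together with the fact that the vectors $\overrightarrow{x}$ span $\br^4$, is cleaner than the brute-force $4\times4$ matrix products the paper tacitly relies on, and it correctly accounts for the order reversal $R(ab)=R(b)R(a)$ and the commutation $L(a)R(b)=R(b)L(a)$.
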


Let $Mat(4,\br)$ be the set of real matrices of order $4$. By Proposition\ref{pro2.3}, $C\ell_2$ and $Mat(4,\br)$ can be thought of
as the rings $(C\ell_2,+,\cdot)$ and $( Mat(4,\br),+,\cdot)$. Then we have the following theorem.
\begin{thm}\label{ringiso}
Let $a \in C\ell_2$. Denot the map  $$L:C\ell_2\to Mat(4,\br)$$
 by
 \begin{equation}\label{isomorphmat}
	L: a\to L(a).
\end{equation}
Then $L$ is  a ring homomorphism from $(C\ell_2,+,\cdot)$ to $(Mat(4,\br),+,\cdot)$. Especially, let $im(L)$ be the image of such a
homomorphism. Then $$L:C\ell_{1,2}\to im(L)$$ is a ring isomorphism.
\end{thm}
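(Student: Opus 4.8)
The plan is to verify directly that the map $L \colon C\ell_2 \to Mat(4,\br)$ is a ring homomorphism, and then to check injectivity on the image, from which the isomorphism onto $\mathrm{im}(L)$ follows formally. Most of the work has already been packaged into Proposition~\ref{pro2.3}. First I would recall that a ring homomorphism must respect addition, multiplication, and (for unital rings) the identity. Part (2) of Proposition~\ref{pro2.3} gives $L(a+b)=L(a)+L(b)$, part (4) gives $L(ab)=L(a)L(b)$, and part (1) gives $L(\be_0)=E_4$, where $\be_0=1$ is the identity of $C\ell_2$ and $E_4$ is the identity of $Mat(4,\br)$. Hence $L$ is a ring homomorphism; this part is essentially a citation of the preceding proposition.

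Next I would establish injectivity. By part (1) of Proposition~\ref{pro2.3}, $L(a)=L(b)$ if and only if $a=b$; equivalently $\ker L = \{0\}$ in the sense that $L(a)=0$ forces $a=0$ (reading off $a_0,a_1,a_2,a_3$ from the first column of $L(a)$ in~\eqref{la} already does this). A homomorphism of rings that is injective, viewed as a map onto its image $\mathrm{im}(L)$, is automatically a bijective homomorphism; since its set-theoretic inverse $\mathrm{im}(L)\to C\ell_2$ then also preserves $+$, $\cdot$ and the identity (being the inverse of a map that does), $L \colon C\ell_2 \to \mathrm{im}(L)$ is a ring isomorphism. I would remark that $\mathrm{im}(L)$ is indeed closed under the matrix operations and contains $E_4$, so it is genuinely a subring of $Mat(4,\br)$; this is immediate from the homomorphism property together with $L(\be_0)=E_4$.

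One point that needs a word of care is purely notational: the statement of the theorem writes ``$L\colon C\ell_{1,2}\to \mathrm{im}(L)$'', but the entire construction and Proposition~\ref{pro2.3} concern $C\ell_2 = C\ell_{2,0}$, the $4$-dimensional algebra with basis~\eqref{basise} and Table~\ref{tab1}. I would read $C\ell_{1,2}$ here as a typo for $C\ell_2$ and state the isomorphism for $C\ell_2$; no mathematical content changes. If one wanted to be scrupulous, one could instead note that the argument only uses the structure constants encoded in~\eqref{la}, so it proves the isomorphism for whichever $4$-dimensional Clifford algebra those constants describe.

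There is no real obstacle: the theorem is a formal consequence of Proposition~\ref{pro2.3}, whose parts (1), (2) and (4) supply injectivity, additivity, and multiplicativity respectively, with $L(\be_0)=E_4$ handling the unit. The only thing resembling a ``hard part'' is the verification of Proposition~\ref{pro2.3}~(4), namely $L(ab)=L(a)L(b)$, which amounts to checking that the matrix representation of left multiplication is compatible with composition; but that proposition is assumed as already proved, so here the proof is essentially a two-line deduction plus the remark on notation.
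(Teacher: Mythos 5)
Your proof is correct and follows the same route the paper intends: the paper gives no explicit proof of this theorem, presenting it as an immediate consequence of Proposition~\ref{pro2.3}, and your deduction (additivity from part (2), multiplicativity from part (4), the unit and injectivity from part (1)) is exactly that argument spelled out. Your observation that $C\ell_{1,2}$ is a typo for $C\ell_2$ is also correct.
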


\begin{pro}
	$\det(L(a))=\det(R(a))=H_a^2.$
\end{pro}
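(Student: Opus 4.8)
The plan is to compute $\det(L(a))$ directly from the explicit $4\times 4$ matrix in \eqref{la} and show it equals $H_a^2 = (a_0^2 - a_1^2 - a_2^2 + a_3^2)^2$, and then deduce the statement for $R(a)$ using the relations already established. First I would observe that the claim for $R(a)$ follows immediately from the claim for $L(a)$: by \eqref{ra} we have $R(a) = C L(a)^T C$ with $C = \mathrm{diag}(1,1,1,-1)$, so $\det(R(a)) = \det(C)^2 \det(L(a)^T) = \det(L(a))$ since $\det(C)^2 = 1$ and transposition preserves the determinant. Thus the whole proposition reduces to the single identity $\det(L(a)) = H_a^2$.

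For that identity, the cleanest route I would take is to exploit the ring homomorphism of Theorem~\ref{ringiso} together with the conjugate. Since $L(\bar a a) = L(\bar a) L(a)$ and $\bar a a = H_a \in \br$, we get $L(H_a) = H_a E_4$ by Proposition~\ref{pro2.3}(1)--(2), so $\det(L(\bar a)) \det(L(a)) = \det(H_a E_4) = H_a^4$. By Proposition~\ref{pro2.2}, $L(\bar a) = D L(a)^T D$ with $D = \mathrm{diag}(1,-1,-1,1)$, hence $\det(L(\bar a)) = \det(D)^2 \det(L(a)) = \det(L(a))$. Combining these gives $\det(L(a))^2 = H_a^4$, so $\det(L(a)) = \pm H_a^2$. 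To pin down the sign, I would evaluate at a single convenient point, say $a = \be_0 = 1$: then $L(a) = E_4$, $\det = 1$ and $H_a = 1$, so the plus sign holds identically (the determinant being a polynomial in $a_0,\dots,a_3$, continuity or the fact that a polynomial equal to $\pm$ a square cannot switch sign rules out the minus branch on the region $H_a \neq 0$, and it extends to all of $C\ell_2$ by polynomial identity).

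Alternatively, and perhaps more transparently, I would simply expand $\det(L(a))$ by cofactors or by recognizing the block structure. Writing $z_1 = a_0 + a_3\be_3$, $z_2 = a_2 + a_1\be_3 \in \bc$ as in Proposition~\ref{subalgebra}, the matrix $L(a)$ has a $2\times 2$ block form reflecting $C\ell_2 = \bc + \bc\be_2$, and a direct computation shows $\det(L(a)) = (|z_1|^2 - |z_2|^2)^2 = H_a^2$ using Proposition~\ref{pro1.2}(3). This brute-force expansion is routine and I would not grind through it in the writeup; I would present the homomorphism argument as the main line since it is shortest.

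The only mild obstacle is the sign ambiguity produced by the $\det(L(a))^2 = H_a^4$ step; it is resolved in one line by testing $a = 1$, so there is no real difficulty here. I would state the proof in three or four lines: reduce $R$ to $L$, apply $L(\bar a a) = H_a E_4$ and $L(\bar a) = D L(a)^T D$ to get $\det(L(a))^2 = H_a^4$, then fix the sign by evaluating at $\be_0$.

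\begin{flushright}\BBox\end{flushright}
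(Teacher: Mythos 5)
Your proof is correct and follows essentially the same route as the paper: both use $L(a\bar a)=H_aE_4$ together with $L(\bar a)=DL(a)^TD$ to obtain $\det(L(a))^2=H_a^4$, and both reduce the $R(a)$ case to the $L(a)$ case via $R(a)=CL(a)^TC$. The only difference is that you explicitly resolve the sign ambiguity in $\det(L(a))=\pm H_a^2$ by evaluating at $a=\be_0$ and invoking a polynomial-identity argument, a step the paper's proof passes over silently; this is a small but genuine improvement in rigor.
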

\begin{proof} By the definition\ref{defi1.2}, we have $$L(a\bar{a})=L(H_a)=H_aE_4.$$
Thus $$det(L(a\bar{a}))=H_a^4.$$
By the proposition\ref{pro2.2}, proposition\ref{pro2.3}, we have $$L(a\bar{a})=L(a)L(\bar{a})=L(a)DL(a)^TD.$$
 Thus $\det(L(a\bar{a}))=\det(L(a))^2$, thus $\det(L(a))=H_a^2$. Since $R(a)=CL(a)^TC$, we have $\det(L(a))=\det(R(a)).$
\end{proof}

\begin{defi}
The set of zero divisors in Clifford algebra $C\ell_2$ is
\begin{equation}
 Z(C\ell_2)=\{a\in C\ell_2:H_a=0\}.
 \end{equation}
\end{defi}

\begin{pro}
Let $a\in C\ell_2-Z(C\ell_2)$, then $H_a\neq 0$. The  inverse of $a$ is $$a^{-1}=\frac{\overline{a}}{H_a},$$ and $a^{-1}a=aa^{-1}=1$.
\end{pro}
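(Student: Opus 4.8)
The plan is to verify directly that the displayed element is a two-sided inverse; the only point needing care is that division by $H_a$ is meaningful inside the noncommutative algebra $C\ell_2$. First I would simply unwind the definitions: by the definition of $Z(C\ell_2)$, the hypothesis $a\in C\ell_2-Z(C\ell_2)$ is literally the statement $H_a\neq 0$, so there is nothing to prove there. Next I would justify the notation $\overline{a}/H_a$. By Definition \ref{defi1.2} we have $H_a=\overline{a}a=a\overline{a}\in\br$, and by Proposition \ref{subalgebra}(2), $\br=Cent(C\ell_2)$; hence $H_a$ is a nonzero scalar, its reciprocal $H_a^{-1}\in\br$ is central, and $\overline{a}/H_a:=H_a^{-1}\overline{a}=\overline{a}H_a^{-1}$ is unambiguously defined.

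The verification is then a one-line computation in each direction, using the centrality of $H_a^{-1}$ to pull the scalar through and the identities $a\overline{a}=\overline{a}a=H_a$ from Definition \ref{defi1.2}:
$$a\cdot\frac{\overline{a}}{H_a}=\frac{a\overline{a}}{H_a}=\frac{H_a}{H_a}=1,\qquad\frac{\overline{a}}{H_a}\cdot a=\frac{\overline{a}a}{H_a}=\frac{H_a}{H_a}=1.$$
This shows $a^{-1}=\overline{a}/H_a$ is a two-sided inverse of $a$. If desired, uniqueness follows in the standard way: any $b\in C\ell_2$ with $ab=ba=1$ satisfies $b=b(aa^{-1})=(ba)a^{-1}=a^{-1}$.

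There is essentially no obstacle here; the single step requiring attention is the appeal to $\br=Cent(C\ell_2)$, which makes the fractional notation legitimate and lets the same scalar $H_a^{-1}$ serve on both sides. An alternative, slightly heavier, route would pass through the homomorphism $L$ of Theorem \ref{ringiso}: since $\det L(a)=H_a^2\neq 0$, the matrix $L(a)$ is invertible in $Mat(4,\br)$ with $L(a)^{-1}=H_a^{-1}L(\overline{a})=L(\overline{a}/H_a)$, and injectivity of $L$ on $im(L)$ then yields $a(\overline{a}/H_a)=(\overline{a}/H_a)a=1$; but the direct computation above is cleaner and self-contained.
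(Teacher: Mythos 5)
Your proposal is correct, and it matches the argument the paper intends: the paper states this proposition without any proof, treating it as immediate from Definition 1.2, where $H_a=\overline{a}a=a\overline{a}$ is a real (hence central, by Proposition 1.1(2)) scalar which is nonzero precisely when $a\notin Z(C\ell_2)$. Your direct two-sided verification, with the remark that centrality of $H_a^{-1}$ legitimizes the fraction notation, is exactly the omitted computation, and the uniqueness remark and the alternative route via $L$ are harmless extras.
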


\begin{exam}Let $a=1-\be_1+2\be_3$. Then $H_a=4$ and $$a^{-1}=\frac{1+\be_1-2\be_3}{4}.$$
\end{exam}

\section{The Moore-Penrose Inverse of Elements in $C\ell_2$}\label{mpinvsec}
By the theory of matrix, we have the following definition.
\begin{defi}
For any $m\times n$ real matrix $A$, if exist a $n\times m$ real matrix $X$, satisfy\cite{daih}
\begin{equation}\label{mpeqdefi}
AXA=A,XAX=X,(AX)^T=AX,(XA)^T=XA, \end{equation}
then $X$ be the matrix of Moore-Penrose inverse of $A$.
\end{defi}
We represent the Moore-Penrose inverse of $A$ by $A^+$.

Note that $L(a')=L(a)^T$, By Theorem\ref{ringiso} and the concept of Moore-Penrose inverse of real matrices, we have
the following Definition.
\begin{defi}\label{lemueq}
Let $a\in C\ell_2$. Then exist a $x\in C\ell_2$ satisfy:
\begin{equation}\label{mpeqs} axa=a,\,xax=x, \, (ax)'=ax, (xa)'=xa, \end{equation}
then $x$ be the Moore-Penrose inverse of $a$.
\end{defi}

By Theorem\ref{ringiso}and Lemma\ref{lemueq}, we have.
\begin{pro}
	\begin{equation}\label{laplus}L(a^+)=L(a)^+\end{equation}
	and
	\begin{equation}\label{raplus}R(a^+)=R(a)^+.\end{equation}
\end{pro}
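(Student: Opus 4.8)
The plan is to transport the defining equations of the Moore--Penrose inverse through the ring isomorphism $L$ and its companion $R$, using the translation dictionary built up in Propositions~\ref{pro2.3} and~\ref{pro2.2}. First I would apply $L$ to each of the four equations in~\eqref{mpeqs}. Since $L$ is multiplicative (Proposition~\ref{pro2.3}(4)) and additive, $axa=a$ becomes $L(a)L(x)L(a)=L(a)$ and $xax=x$ becomes $L(x)L(a)L(x)=L(x)$. For the remaining two, I would use the identity $L(c')=L(c)^T$ from Proposition~\ref{pro2.3}(2): applying $L$ to $(ax)'=ax$ gives $(L(a)L(x))^T=L(a)L(x)$, and similarly $(L(x)L(a))^T=L(x)L(a)$. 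Thus $L(x)$ satisfies the four matrix equations~\eqref{mpeqdefi} with $A=L(a)$, so by uniqueness of the Moore--Penrose inverse of a real matrix, $L(x)=L(a)^+$; that is, $L(a^+)=L(a)^+$.

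For the second identity I would argue analogously with $R$. Applying $R$ to $axa=a$ and $xax=x$ and using $R(ab)=R(b)R(a)$ (Proposition~\ref{pro2.3}(4)) gives $R(a)R(x)R(a)=R(a)$ and $R(x)R(a)R(x)=R(x)$ — note the order reversal is harmless since the middle factor is sandwiched symmetrically. For the Hermitian-type conditions, $R(c')=R(c)^T$ from Proposition~\ref{pro2.3}(3) turns $(ax)'=ax$ into $(R(x)R(a))^T=R(x)R(a)$ and $(xa)'=xa$ into $(R(a)R(x))^T=R(a)R(x)$, which are exactly the two symmetry axioms for $R(x)$ to be the Moore--Penrose inverse of $R(a)$. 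Uniqueness then yields $R(a^+)=R(a)^+$.

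The one genuine subtlety — and the step I would flag as the main point to get right — is the order-reversal in the multiplicative property of $R$: because $R$ is an \emph{anti}-homomorphism rather than a homomorphism, one must check that each of the four Penrose axioms is invariant under reversing all products. The equations $AXA=A$ and $XAX=X$ are visibly symmetric under reversal, and reversing a product inside a ``$(\,\cdot\,)^T=(\,\cdot\,)$'' condition just swaps the roles of the two projector conditions $(AX)^T=AX$ and $(XA)^T=XA$ with each other, so the full system is preserved as a set. Once that observation is in place the argument is a direct substitution. I would also remark that existence and uniqueness of $x=a^+$ as an element of $C\ell_2$ is automatic here: $L(a)^+$ always exists as a real matrix, and one must note that it actually lies in $im(L)$ so that it has a preimage — this follows because $L(a)^+$ can be written as a limit (or an explicit polynomial expression, e.g. $L(a)^+ = L(a)^T(L(a)L(a)^T)^{+}$ unwound via $(\,\cdot\,)^+$ of the symmetric part) of matrices in the subring $im(L)$, and $im(L)$ is closed; alternatively one invokes Definition~\ref{lemueq} directly as the definition of $a^+$ and the proposition merely records the compatibility.
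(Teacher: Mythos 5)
Your proposal is correct and follows essentially the same route the paper intends: the paper offers no written proof, merely citing Theorem~\ref{ringiso} and Definition~\ref{lemueq}, and your argument is exactly the transport of the four Penrose equations through $L$ (and the anti-homomorphism $R$) together with uniqueness of the matrix Moore--Penrose inverse. Your explicit check that the order reversal under $R$ merely swaps the two symmetry axioms, and your remark on existence, are careful elaborations of what the paper leaves implicit.
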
	

Obviously, $0^+=0$. If $H_a\neq0$, then $a^+=a^{-1}$. We have
$$L(a)^+=L\big(\frac{\overline{a}}{H_a}\big)=\frac{1}{H_a}\left(\begin{array}{cccc}
	a_0 &-a_1  &-a_2  &a_3  \\
	-a_1 &a_0   & -a_3  & a_2  \\
	-a_2 & a_3  &a_0   &-a_1  \\
	-a_3 &  a_2 &-a_1   & a_0
	\end{array}\right),\,  \forall a\in C\ell_2-Z(C\ell_2),$$
where $L(a)^+$ satisfies the equation(\ref{mpeqdefi}).

\begin{pro}\label{mppro}
For $a\in Z(C\ell_2)-\{0\}$. Let $$x=\frac{a'}{4(a_0^2+a_3^2)},$$ we have\begin{equation} axa=a,\,xax=x,\,(ax)'=ax,\, (xa)'=xa. \end{equation}
\end{pro}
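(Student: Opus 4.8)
The plan is to verify the four defining equations of Proposition~\ref{mppro} directly, exploiting the structure $a = z_1 + z_2\be_2$ from Proposition~\ref{subalgebra} together with the fact that $a \in Z(C\ell_2)$ means $H_a = |z_1|^2 - |z_2|^2 = 0$, i.e. $|z_1| = |z_2|$. First I would unwind the candidate: since $a' = a_0 + a_1\be_1 + a_2\be_2 - a_3\be_3$, writing $a = z_1 + z_2\be_2$ with $z_1 = a_0 + a_3\be_3$, $z_2 = a_2 + a_1\be_3$ gives $a' = \overline{z_1} + z_2\be_2$ (the $\be_3$-component flips sign in $z_1$ but not in $z_2$, since $z_2\be_2$ carries the $\be_1,\be_2$ parts). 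Also note $4(a_0^2 + a_3^2) = 4|z_1|^2 = 2(|z_1|^2 + |z_2|^2)$ using $H_a = 0$; call this quantity $N$, so $x = a'/N$ with $N = 2(|z_1|^2+|z_2|^2) \neq 0$ because $a \neq 0$.

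The key computation is $aa'$ and $a'a$. Using Proposition~\ref{pro1.2}(2), $\be_2 z = \overline{z}\be_2$, I compute
\[
aa' = (z_1 + z_2\be_2)(\overline{z_1} + z_2\be_2) = z_1\overline{z_1} + z_1 z_2 \be_2 + z_2\be_2\overline{z_1} + z_2\be_2 z_2\be_2 = |z_1|^2 + z_1 z_2\be_2 + z_2 z_1 \be_2 + z_2 \overline{z_2}\,\be_2^2.
\]
Now $\be_2^2 = -1$ (from the multiplication table, or $i_2^2 = -1$), so $z_2\overline{z_2}\be_2^2 = -|z_2|^2$, and the middle terms combine to $2z_1 z_2 \be_2$. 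Hence $aa' = (|z_1|^2 - |z_2|^2) + 2z_1 z_2 \be_2 = 2z_1 z_2 \be_2$ by $H_a = 0$. A symmetric calculation gives $a'a = (\overline{z_1} + z_2\be_2)(z_1 + z_2\be_2) = |z_1|^2 - |z_2|^2 + (\overline{z_1}z_2 + \overline{z_1}z_2)\be_2$ — wait, I must be careful with the order; more precisely $a'a = \overline{z_1}z_1 + \overline{z_1}z_2\be_2 + z_2\be_2 z_1 + z_2\be_2 z_2\be_2 = |z_1|^2 + \overline{z_1}z_2\be_2 + z_2\overline{z_1}\be_2 - |z_2|^2 = 2\,\mathrm{Re}(\overline{z_1}z_2)$-type expression times $\be_2$; in any case $a'a = 2 z_2\overline{z_1}\be_2$ after collecting (since $\overline{z_1}z_2$ and $z_2\overline{z_1}$ are complex numbers and $\bc$ is commutative, they are equal). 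So both $aa'$ and $a'a$ are purely of the form $(\text{complex})\cdot\be_2$, hence have zero $\be_3$-component, so $(aa')' = aa'$ automatically; this will handle the two idempotency-of-projection conditions once I divide by the scalar $N$.

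Next I would verify $axa = a$, i.e. $a a' a = N a$. From $aa' = 2z_1z_2\be_2$ I compute $(aa')a = 2z_1 z_2\be_2(z_1 + z_2\be_2) = 2z_1 z_2 \overline{z_1}\be_2 + 2 z_1 z_2 \overline{z_2}\be_2^2 = 2|z_1|^2 z_2 \be_2 - 2|z_2|^2 z_1$; using $|z_1|^2 = |z_2|^2$ this equals $2|z_1|^2(z_2\be_2 - z_1)$, which is \emph{not} obviously $N a = 2(|z_1|^2+|z_2|^2)(z_1 + z_2\be_2) = 4|z_1|^2(z_1 + z_2\be_2)$. This sign discrepancy is the main obstacle I anticipate: I expect that the honest computation forces a re-examination of either the claimed formula or the definition of $a'$, and the resolution is likely that one should track $(aa')a$ versus $a(a'a)$ carefully, or that the correct identity is $a a' a = -2|z_1|^2(z_1 - z_2\be_2)\cdot(\text{sign})$, matching $Na$ only after noting $a_0^2+a_3^2$ may need reinterpretation. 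Concretely, I would recompute assuming instead $x = a'/\big(2(a_0^2+a_3^2)\big)$ or check whether $a(a'a) = a\cdot 2z_2\overline{z_1}\be_2 = 2z_2\overline{z_1}\cdot\overline{z_1}\cdot\be_2\cdot(\dots)$; once the scalar normalization is pinned down so that $a x a = a$ holds, the remaining three equations follow: $xax = x$ reduces to $a'a a' = N a'$ by the same algebra with primes, $(ax)' = ax$ and $(xa)' = xa$ follow from the observation above that $aa'$ and $a'a$ lie in $\bc\be_2$ and thus are fixed by the prime operation (which only negates the $\be_3$-coefficient). So the skeleton is: (i) rewrite everything in $z_1, z_2$ coordinates; (ii) compute $aa'$, $a'a$ using $\be_2 z = \overline z\be_2$ and $\be_2^2=-1$; (iii) use $H_a=0$ to simplify; (iv) pin down the scalar so $axa=a$; (v) deduce the other three identities from steps (ii)--(iii). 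The arithmetic in step (iv) — reconciling the signs with the stated denominator $4(a_0^2+a_3^2)$ — is where I expect to spend the real effort.
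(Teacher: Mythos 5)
Your skeleton is essentially the paper's own argument (decompose $a=z_1+z_2\be_2$, use $\be_2 z=\overline{z}\be_2$ and $H_a=|z_1|^2-|z_2|^2=0$, compute $aa'$ and $a'a$, read off the four identities), but your execution contains a genuine error that derails step (iv): you take $\be_2^2=-1$. In this paper $C\ell_2$ means $C\ell_{2,0}$, so $i_1^2=i_2^2=+1$, and the multiplication table explicitly lists $\be_1\be_1=\be_2\be_2=1$ and $\be_3\be_3=-1$. A quick sanity check would have caught this: if $\be_2^2$ were $-1$, then $\overline{a}a=(\overline{z_1}-z_2\be_2)(z_1+z_2\be_2)$ would equal $|z_1|^2+|z_2|^2$, contradicting Proposition~\ref{pro1.2}(3) and forcing $Z(C\ell_2)=\{0\}$, i.e.\ the proposition would be vacuous. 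With the correct sign the key products are
\[
aa'=|z_1|^2+|z_2|^2+2z_1z_2\be_2=2|z_1|^2\Big(1+\tfrac{z_2}{\overline{z_1}}\be_2\Big),
\qquad
a'a=|z_1|^2+|z_2|^2+2\overline{z_1}z_2\be_2=2|z_1|^2\Big(1+\tfrac{z_2}{z_1}\be_2\Big),
\]
so $ax=\frac12\big(1+\frac{z_2}{\overline{z_1}}\be_2\big)$ and $xa=\frac12\big(1+\frac{z_2}{z_1}\be_2\big)$ exactly as in the paper, and then $axa=\frac12\big(z_1+z_2\be_2+z_2\be_2+\frac{|z_2|^2}{\overline{z_1}}\big)=a$ using $|z_2|^2=|z_1|^2=z_1\overline{z_1}$; the stated denominator $4(a_0^2+a_3^2)$ is correct and needs no reinterpretation. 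So the ``sign discrepancy'' you flag as the main obstacle is not a defect of the statement but of your value of $\be_2^2$, and your proposal as written does not establish $axa=a$.

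One further small point: for $(ax)'=ax$ and $(xa)'=xa$ the paper argues more cleanly than your component-counting, directly from Proposition~\ref{pro1.2}(1): $(aa')'=(a')'a'=aa'$ and likewise for $a'a$. Your observation that $aa'$ has no $\be_3$-component is salvageable (it holds for the corrected $aa'$ as well, since the real part and a $\bc\,\be_2$ term contribute only to the $1,\be_1,\be_2$ components), but it is extra work compared with the one-line identity.
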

\begin{proof} Since $ax=\frac{aa'}{4(a_0^2+a_3^2)},\,xa=\frac{a'a}{4(a_0^2+a_3^2)}$, by Proposition\ref{pro1.2}(1), we have\begin{equation}(ax)'=ax,
	\,\,\,(xa)'=xa.\end{equation} Let $a=z_1+z_2\be_2$, where $z_1=a_0+a_3\be_3$, $z_2=a_2+a_1\be_3$. Then $a'=\overline{z_1}+z_2\be_2$, $|z_1|^2=|z_2|^2$. By Proposition\ref{pro1.2}(2), we have $$e_2\overline{z_1}=z_1e_2,\, e_2z_2=\overline{z_2}e_2,\, e_2z_1=\overline{z_1}e_2.$$	
Then \begin{equation}\label{eq1} ax=\frac{1}{2}\big(1+\frac{z_2}{\overline{z_1}}\be_2\big),\end{equation}
\begin{equation}\label{eq2} xa=\frac{1}{2}\big(1+\frac{z_2}{z_1}\be_2\big).  \end{equation} Thus, we have $axa=a,\,xax=x.$
\end{proof}

By Proposition\ref{mppro}, we have $$L(a)^+=L\big(\frac{a'}{4(a_0^2+a_3^2)}\big)=\frac{1}{4(a_0^2+a_3^2)}\left(\begin{array}{cccc}
	a_0 &a_1  &a_2  &a_3  \\
	a_1 &a_0   & -a_3  & -a_2  \\
	a_2 & a_3  &a_0   &a_1  \\
	-a_3 &  -a_2 &a_1   & a_0
	\end{array}\right),\, \forall a\in Z(C\ell_2)-\{0\}, $$  where $L(a)^+$ satisfies the equation(\ref{mpeqdefi}).

By Proposition\ref{mppro}, we have the following Theorem.
\begin{thm}\label{defition} Let $a=a_0+a_1\be_1+a_2\be_2+a_3\be_3\in
	C\ell_2, \,a_i\in \br,i=0,\cdots,3$. Then
$$a^+=\left\{\begin{array}{ll}
0, & \hbox{ $a=0$;} \\
\frac{\overline{a}}{H_a}, & \hbox{ $H_a\neq 0$;} \\
\frac{a'}{4(a_0^2+a_3^2)}, & \hbox{ $H_a=0$ and $a\neq 0$.} \\
\end{array}
\right.$$
\end{thm}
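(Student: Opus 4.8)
The plan is to verify Theorem \ref{defition} by treating its three cases separately, since the formula for $a^+$ is already assembled case-by-case from earlier results. The first case, $a=0$, is immediate: $x=0$ satisfies $axa=a$, $xax=x$, $(ax)'=ax$, $(xa)'=xa$ trivially, so $0^+=0$. The second case, $H_a\neq 0$, is where I would invoke the earlier proposition giving the ordinary inverse $a^{-1}=\overline{a}/H_a$ with $a^{-1}a=aa^{-1}=1$. One checks that when $a$ is invertible, $x=a^{-1}$ satisfies all four equations of Definition \ref{lemueq}: indeed $axa=aa^{-1}a=a$, $xax=a^{-1}aa^{-1}=a^{-1}=x$, and $ax=xa=1$ so $(ax)'=1'=1=ax$ and similarly $(xa)'=xa$. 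Since the Moore-Penrose inverse is unique (this follows from the uniqueness of $A^+$ for real matrices together with the ring isomorphism $L$ of Theorem \ref{ringiso}, via $L(a^+)=L(a)^+$), we conclude $a^+=a^{-1}=\overline{a}/H_a$ in this case.

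The third case, $H_a=0$ and $a\neq 0$, is the substantive one, but essentially all the work has been done in Proposition \ref{mppro}: for such $a$ one has $a_0^2+a_3^2=a_1^2+a_2^2$ (this is $H_a=0$ rewritten via Proposition \ref{pro1.2}(3)), and since $a\neq 0$ forces $a_0^2+a_3^2=a_1^2+a_2^2>0$, the element $x=a'/(4(a_0^2+a_3^2))$ is well-defined; Proposition \ref{mppro} shows it satisfies $axa=a$, $xax=x$, $(ax)'=ax$, $(xa)'=xa$. Hence $x$ is a Moore-Penrose inverse of $a$ in the sense of Definition \ref{lemueq}, and again by uniqueness it is the Moore-Penrose inverse. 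I would also remark that the three conditions in the theorem are mutually exclusive and exhaustive (either $a=0$, or $a\neq0$ with $H_a\neq0$, or $a\neq0$ with $H_a=0$), so the piecewise definition is well-posed.

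The only genuine subtlety — the main obstacle, though a mild one — is justifying uniqueness of $a^+$, since Definition \ref{lemueq} merely asserts existence of a solution $x$ without asserting it is unique. The clean way is to push everything through $L$: if $x$ and $y$ both satisfy the four equations in \eqref{mpeqs}, then applying $L$ and using $L(a')=L(a)^T$ from Proposition \ref{pro2.3}(2) together with $L(ab)=L(a)L(b)$, we get that $L(x)$ and $L(y)$ both satisfy the matrix Moore-Penrose equations \eqref{mpeqdefi} for $L(a)$; by the uniqueness of the matrix Moore-Penrose inverse, $L(x)=L(y)$, and since $L$ is injective on $C\ell_2$ (Proposition \ref{pro2.3}(1)), $x=y$. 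With uniqueness in hand, the theorem follows by simply checking that in each of the three cases the stated element satisfies \eqref{mpeqs}, which is exactly what the cited propositions provide. I would present the proof as: state uniqueness via $L$, then dispatch the three cases by citing $0^+=0$, the inverse proposition, and Proposition \ref{mppro} respectively.
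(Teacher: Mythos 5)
Your proposal is correct and follows essentially the same route as the paper, which assembles the theorem from the observation $0^+=0$, the inverse $a^{-1}=\overline{a}/H_a$ when $H_a\neq 0$, and Proposition \ref{mppro} for the zero-divisor case. Your explicit uniqueness argument via the injectivity of $L$ and the uniqueness of the matrix Moore-Penrose inverse is a point the paper leaves implicit (through $L(a^+)=L(a)^+$), and your check that $a\neq 0$ with $H_a=0$ forces $a_0^2+a_3^2>0$ is a worthwhile detail the paper also omits.
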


We have another representation of the matrix of Clifford algebra $C\ell_2$.
\begin{thm}
Clifford algebra $C\ell_2$ as associative algebra, is  isomorphic to $\br^{2\times2}$\cite{Lou}. We have the map£º
\begin{equation}\label{isomorphm2}
	\varphi:a=a_0+a_1\be_1+a_2\be_2+a_3\be_3 \in C\ell_2 \to \varphi(a):=\left(\begin{array}{cc}
	a_0+a_1&a_2+a_3  \\
a_2-a_3&a_0-a_1   \\
		\end{array}\right).
	\end{equation}
\end{thm}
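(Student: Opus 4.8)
The plan is to verify directly that $\varphi$ is an isomorphism of associative unital $\br$-algebras, reducing everything to the action of $\varphi$ on the basis $\{\be_0,\be_1,\be_2,\be_3\}$ together with Table~1. First note that $\varphi$ is $\br$-linear by construction, since each entry of $\varphi(a)$ is a real linear form in $(a_0,a_1,a_2,a_3)$. On the basis it gives
\[
\varphi(\be_0)=\begin{pmatrix}1&0\\0&1\end{pmatrix},\quad
\varphi(\be_1)=\begin{pmatrix}1&0\\0&-1\end{pmatrix},\quad
\varphi(\be_2)=\begin{pmatrix}0&1\\1&0\end{pmatrix},\quad
\varphi(\be_3)=\begin{pmatrix}0&1\\-1&0\end{pmatrix},
\]
and these four matrices are linearly independent over $\br$. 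Since $\dim_\br C\ell_2=\dim_\br\br^{2\times2}=4$, it follows that $\varphi$ is already an $\br$-linear isomorphism, and in particular $\varphi(\be_0)=E_2$. (Injectivity can also be checked by hand: $\varphi(a)=0$ forces $a_0+a_1=a_0-a_1=a_2+a_3=a_2-a_3=0$, hence $a=0$.)

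Next I would establish multiplicativity, i.e. $\varphi(ab)=\varphi(a)\varphi(b)$ for all $a,b\in C\ell_2$. Because both the product in $C\ell_2$ and matrix multiplication in $\br^{2\times2}$ are $\br$-bilinear and $\varphi$ is linear, it suffices to verify the identity on basis pairs: $\varphi(\be_s\be_t)=\varphi(\be_s)\varphi(\be_t)$ for $s,t\in\{0,1,2,3\}$. The four cases with $s=0$ or $t=0$ are immediate from $\varphi(\be_0)=E_2$. For the remaining nine pairs one computes the products of the $2\times2$ matrices listed above and compares with the corresponding entry of Table~1; for example $\varphi(\be_1)\varphi(\be_2)=\begin{pmatrix}0&1\\-1&0\end{pmatrix}=\varphi(\be_3)$ matches $\be_1\be_2=\be_3$, $\varphi(\be_2)\varphi(\be_1)=-\varphi(\be_3)$ matches $\be_2\be_1=-\be_3$, and $\varphi(\be_3)^2=-E_2=\varphi(-1)$ matches $\be_3^2=-1$. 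Running through all nine entries of Table~1 confirms that $\varphi$ respects multiplication.

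Combining the two steps, $\varphi$ is an $\br$-linear bijection with $\varphi(\be_0)=E_2$ and $\varphi(ab)=\varphi(a)\varphi(b)$, hence an isomorphism of associative unital $\br$-algebras, which is precisely the assertion. There is no substantive obstacle in this proof; the only thing that needs care is bookkeeping the signs in the $\be_3$-related entries (the $a_2-a_3$ and $-a_3$ positions of $\varphi$), where the antisymmetry $\be_1\be_2=-\be_2\be_1$ and the relation $\be_3^2=-1$ must reproduce correctly — everything else is a mechanical check against Table~1. One may optionally remark that this also follows from the classification of real Clifford algebras, $C\ell_{2,0}\cong\br(2)$, but the explicit map $\varphi$ still requires the computation above.
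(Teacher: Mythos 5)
Your verification is correct: the images $\varphi(\be_0)=E_2$, $\varphi(\be_1)=\mathrm{diag}(1,-1)$, $\varphi(\be_2)=\left(\begin{smallmatrix}0&1\\1&0\end{smallmatrix}\right)$, $\varphi(\be_3)=\left(\begin{smallmatrix}0&1\\-1&0\end{smallmatrix}\right)$ are linearly independent, and all nine products match Table~1 (I checked the sign-sensitive ones: $\varphi(\be_3)^2=-E_2$, $\varphi(\be_2)\varphi(\be_3)=-\varphi(\be_1)$, $\varphi(\be_3)\varphi(\be_2)=\varphi(\be_1)$). The comparison point is that the paper offers no proof at all for this theorem — it simply cites Lounesto's classification $C\ell_{2,0}\cong\br(2)$ and asserts the explicit map; your basis-by-basis check is therefore more than the paper provides, and it is the right way to justify that this \emph{particular} matrix assignment (as opposed to merely the abstract existence of an isomorphism) is multiplicative and unital. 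Your reduction of multiplicativity to basis pairs via bilinearity is the standard and correct shortcut, and your closing remark that the abstract classification alone does not pin down $\varphi$ is exactly the gap the paper leaves open.
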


It's easy to verify that $det(\varphi(a))=H_a$. By the concept of the Moore-Penrose inverse, we have the following theorems.
\begin{thm}\label{thm3.3}
Let $a \in C\ell_2-Z(C\ell_2)$, then $H_a\neq0$. Let
		$$\varphi(a)^+=\varphi \big(\frac{\overline{a}}{H_a}\big)=\frac{1}{H_a}\left(\begin{array}{cccc}
	a_0-a_1 &-a_2-a_3   \\
	-a_2+a_3 &a_0+a_1
	\end{array}\right),$$
then $\varphi(a)^+$ satisfies the equation(\ref{mpeqdefi}).
\end{thm}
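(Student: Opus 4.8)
The plan is to reduce the statement to the elementary fact that the Moore--Penrose inverse of an invertible square matrix coincides with its ordinary inverse. First I would note that since $a\in C\ell_2-Z(C\ell_2)$ we have $H_a\neq 0$, so by the earlier proposition on inverses $a$ is invertible in $C\ell_2$ with $a^{-1}=\overline a/H_a$. Because $\varphi$ is an (associative) algebra isomorphism of $C\ell_2$ onto $\br^{2\times 2}$, it is in particular multiplicative and unital, so $\varphi(a)\varphi\bigl(\overline a/H_a\bigr)=\varphi\bigl(a\cdot\overline a/H_a\bigr)=\varphi(1)=E_2$ and likewise $\varphi\bigl(\overline a/H_a\bigr)\varphi(a)=E_2$. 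Hence $\varphi(a)$ is an invertible $2\times 2$ real matrix and its ordinary inverse is exactly $\varphi(a)^+$ as written in the statement.

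Then it remains only to substitute $A=\varphi(a)$ and $X=\varphi(a)^+$ into the four defining relations \eqref{mpeqdefi}. From $AX=XA=E_2$ we get $AXA=A$ and $XAX=X$ immediately, and $(AX)^{T}=E_2^{T}=E_2=AX$, $(XA)^{T}=XA$; thus $X$ satisfies all of \eqref{mpeqdefi}, which is precisely the assertion. As a bookkeeping check I would also verify directly from \eqref{isomorphm2}, applied to $\overline a=a_0-a_1\be_1-a_2\be_2-a_3\be_3$, that $\varphi(\overline a/H_a)$ really equals the matrix displayed in the statement, and use $\det\varphi(a)=H_a$ to recognize it as $\tfrac{1}{\det\varphi(a)}\,\mathrm{adj}\,\varphi(a)$, the standard cofactor formula for the inverse of a $2\times 2$ matrix.

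There is no genuinely hard step here. The only things requiring care are invoking the correct earlier facts --- that $\varphi$ is an isomorphism and that $a^{-1}=\overline a/H_a$ when $H_a\neq 0$ --- and making sure the matrix in the statement is $\varphi$ applied to $\overline a/H_a$, and not something off by a sign or a transpose. If one prefers a fully self-contained argument that does not quote the isomorphism, one can instead just compute the two $2\times 2$ products $\varphi(a)X$ and $X\varphi(a)$ by hand, observe both equal $E_2$ (this is the usual $2\times 2$ adjugate identity together with $\det\varphi(a)=H_a\neq 0$), and then read off the four Penrose equations as above.
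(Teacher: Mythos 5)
Your proposal is correct and matches the paper's (implicit) argument: the paper offers no written proof of this theorem, relying on the earlier observation that $a^+=a^{-1}=\overline{a}/H_a$ when $H_a\neq 0$ together with the fact that $\varphi$ is a unital algebra isomorphism with $\det\varphi(a)=H_a$, which is exactly the reduction you carry out. Your verification that the displayed matrix is $\tfrac{1}{\det\varphi(a)}\,\mathrm{adj}\,\varphi(a)$ and that an ordinary inverse trivially satisfies the four Penrose equations fills in the details the paper leaves to the reader.
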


\begin{thm}\label{thm3.4}
Let $a \in Z(C\ell_2)-\{0\}$, then $H_a=0$. Let
		$$\varphi(a)^+=\varphi \big(\frac{a'}{4(a_0^2+a_3^2)}\big)=\frac{1}{4(a_0^2+a_3^2)}\left(\begin{array}{cccc}
	a_0+a_1 &a_2-a_3   \\
	a_2+a_3 &a_0-a_1
	\end{array}\right),$$
then $\varphi(a)^+$ satisfies the equation(\ref{mpeqdefi}).
\end{thm}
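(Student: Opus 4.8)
The plan is to verify directly that the proposed matrix $\varphi(a)^+$ satisfies the four Moore-Penrose equations \eqref{mpeqdefi}, and the cleanest way to do this is to leverage the algebra isomorphism $\varphi$ rather than multiplying $2\times 2$ matrices by brute force. First I would observe that since $\varphi$ is an associative algebra isomorphism onto $\br^{2\times2}$, it respects products: $\varphi(ab)=\varphi(a)\varphi(b)$, and $\varphi$ carries the identity of $C\ell_2$ to $E_2$. Next I would identify what operation on matrices corresponds to the prime map $a\mapsto a'$. A short check on the basis elements $\be_0,\be_1,\be_2,\be_3$ against the formula \eqref{isomorphm2} shows that $\varphi(a')$ equals the adjugate of $\varphi(a)$ up to the factor $H_a=\det\varphi(a)$; more precisely, writing $\mathrm{adj}$ for the classical adjugate of a $2\times2$ matrix, one finds $\varphi(a')=\mathrm{adj}\,(\varphi(a'))^{\ast}$-type relations are not needed — instead the relevant fact is that $\varphi(\bar a)$ and $\varphi(a')$ are the two natural "transpose-like" involutions, and $\varphi(a)\varphi(a')=\varphi(aa')=H_aE_2$ because $aa'$ is the norm. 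This is the key algebraic identity: $\varphi(a')$ is, up to the scalar $H_a$, the inverse of $\varphi(a)$ on the invertible locus.

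With that in hand, the case $H_a\neq 0$ of Theorem \ref{thm3.3} is immediate: here $a^+=a^{-1}=\bar a/H_a$ (by the earlier proposition on inverses), so $\varphi(a)^+=\varphi(a)^{-1}$, and a genuine two-sided inverse trivially satisfies all four equations in \eqref{mpeqdefi} since $\varphi(a)\varphi(a)^{-1}=\varphi(a)^{-1}\varphi(a)=E_2$ is symmetric. The only thing to confirm is that the displayed matrix really is $\varphi(\bar a/H_a)$, which is just plugging $\bar a=a_0-a_1\be_1-a_2\be_2-a_3\be_3$ into \eqref{isomorphm2} and scaling by $1/H_a$; this matches the stated matrix.

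For the singular case $H_a=0$, $a\neq 0$ (Theorem \ref{thm3.4}), I would transport Proposition \ref{mppro} through $\varphi$. Proposition \ref{mppro} already shows that $x=a'/(4(a_0^2+a_3^2))$ satisfies $axa=a$, $xax=x$, $(ax)'=ax$, $(xa)'=xa$ in $C\ell_2$. Applying the homomorphism $\varphi$ turns the first two equations into $\varphi(a)\varphi(x)\varphi(a)=\varphi(a)$ and $\varphi(x)\varphi(a)\varphi(x)=\varphi(x)$ verbatim. For the remaining two I need to know that the prime operation corresponds under $\varphi$ to an operation $M\mapsto M^{\dagger}$ for which "$(ax)'=ax$ in $C\ell_2$" translates to "$\varphi(ax)$ is symmetric." So the crucial sub-step is to pin down this correspondence: one checks on the four basis matrices that $\varphi(a')=J\varphi(a)^{T}J^{-1}$ for a fixed matrix $J$ (indeed $J=\begin{pmatrix}0&1\\1&0\end{pmatrix}$ works, since prime fixes $\be_0,\be_1,\be_2$ and negates $\be_3$, and conjugation by $J$ on $\varphi(\be_i)$ reproduces exactly that sign pattern). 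Then $(y)'=y$ is equivalent to $J\varphi(y)^TJ^{-1}=\varphi(y)$, i.e. $\varphi(y)$ is symmetric with respect to the bilinear form given by $J$; a direct check shows $\varphi(y)$ satisfying this is genuinely a symmetric matrix in the cases at hand (or one simply verifies $(\varphi(a)\varphi(x))^T=\varphi(a)\varphi(x)$ by hand, which for $2\times2$ matrices is one short computation). Finally I would confirm the displayed formula is $\varphi(a'/(4(a_0^2+a_3^2)))$ by substituting $a'=a_0+a_1\be_1+a_2\be_2-a_3\be_3$ into \eqref{isomorphm2}.

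The main obstacle is the bookkeeping around the prime involution: \eqref{mpeqdefi} demands ordinary matrix symmetry $(AX)^T=AX$, whereas Proposition \ref{mppro} gives the Clifford-algebra condition $(ax)'=ax$, and these are not literally the same unless $\varphi$ intertwines prime with transpose. I expect that $\varphi$ as defined in \eqref{isomorphm2} does \emph{not} intertwine prime with plain transpose but rather with a $J$-twisted transpose, so the honest fix is either (i) to redo the four Moore-Penrose equations for $\varphi(a)^+$ by direct $2\times2$ matrix multiplication — which is only a handful of lines and sidesteps the issue entirely — or (ii) to note that $C$ and $D$ from Propositions \ref{pro2.2}–\ref{pro2.3} already encode these twists for $L$ and to mirror that analysis for $\varphi$. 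Given how short the direct computation is for $2\times2$ matrices, I would in the write-up simply multiply out $\varphi(a)\varphi(a)^+\varphi(a)$, $\varphi(a)^+\varphi(a)\varphi(a)^+$, $\varphi(a)\varphi(a)^+$, and $\varphi(a)^+\varphi(a)$ using the two displayed matrices and the relation $a_0^2+a_1^2+a_2^2+a_3^2 = 2(a_0^2+a_3^2)$ (valid because $H_a=a_0^2-a_1^2-a_2^2+a_3^2=0$), and read off that the first equals $\varphi(a)$, the second equals $\varphi(a)^+$, and the last two are manifestly symmetric.
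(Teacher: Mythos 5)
Your strategy is sound, and since the paper states Theorem \ref{thm3.4} with no proof at all (it is offered as an immediate consequence of Proposition \ref{mppro} and ``the concept of the Moore-Penrose inverse''), your write-up supplies exactly the argument the paper omits. One intermediate claim, however, is wrong --- and correcting it makes your proof shorter rather than longer. You assert that $\varphi$ does \emph{not} intertwine the prime involution with the plain transpose, and that instead $\varphi(a')=J\varphi(a)^TJ^{-1}$ with $J=\left(\begin{smallmatrix}0&1\\1&0\end{smallmatrix}\right)$. Neither statement is correct: conjugating $\varphi(\be_1)=\mathrm{diag}(1,-1)$ by that $J$ yields $-\varphi(\be_1)$, whereas prime fixes $\be_1$. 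In fact no twist is needed: in \eqref{isomorphm2} the transpose swaps the off-diagonal entries $a_2\pm a_3$, which is precisely the effect of negating $a_3$, so $\varphi(a')=\varphi(a)^T$ identically. With this identity your transport argument closes cleanly: applying the homomorphism $\varphi$ to the four identities of Proposition \ref{mppro} gives $\varphi(a)\varphi(x)\varphi(a)=\varphi(a)$, $\varphi(x)\varphi(a)\varphi(x)=\varphi(x)$, and $\big(\varphi(a)\varphi(x)\big)^T=\varphi\big((ax)'\big)=\varphi(ax)=\varphi(a)\varphi(x)$, and likewise for $xa$, which is exactly \eqref{mpeqdefi}. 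Your fallback of direct $2\times2$ multiplication (using $a_0^2+a_1^2+a_2^2+a_3^2=2(a_0^2+a_3^2)$, valid since $H_a=0$) is also perfectly adequate; note that it amounts to checking that $\varphi(a)^+=\varphi(a)^T/\operatorname{tr}\!\big(\varphi(a)^T\varphi(a)\big)$, the standard Moore-Penrose inverse of a rank-one matrix, since $\det\varphi(a)=H_a=0$ and $\varphi(a)\neq 0$. So the proposal is correct once the erroneous $J$-digression is removed or replaced by the identity $\varphi(a')=\varphi(a)^T$.
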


Thus we have the same definition of the Moore-Penrose inverse in Clifford algebra $C\ell_2$.

\section{Linear equation $axb=d$}\label{linear}

In this section, we will studied the linear equation $axb=d$ in $C\ell_2$. Then we choose the matrix of order 4 of a Clifford algebra. We need some identities. By Proposition\ref{pro2.3} and Definition\ref{lemueq}, we have the following proposition.
\begin{pro}\label{mpprop}  Let $a,b\in C\ell_2$. Then
 \begin{itemize}
	\item[(1)]	$L(a)L(a^+)L(a)=L(a), L(a^+)L(a)L(a^+)=L(a^+),L(a)L(a^+)=\big(L(a)L(a^+)\big)^T,L(a^+)L(a)=\big(L(a^+)L(a)\big)^T;$
	\item[(2)] $R(a)R(a^+)R(a)=R(a), R(a^+)R(a)R(a^+)=R(a^+),R(a)R(a^+)=\big(R(a)R(a^+)\big)^T,R(a^+)R(a)=\big(R(a^+)R(a)\big)^T;$
	\item[(3)] $\big(L(a)R(b)\big)^+=L(a^+)R(b^+).$
\end{itemize}
\end{pro}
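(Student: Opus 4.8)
The plan is to reduce all three parts to facts already established: the matrix identities (\ref{laplus}) and (\ref{raplus}) giving $L(a^+)=L(a)^+$ and $R(a^+)=R(a)^+$, the defining equations (\ref{mpeqdefi}) of the matrix Moore-Penrose inverse, the commutation rule $L(a)R(b)=R(b)L(a)$ from Proposition \ref{pro2.3}(4), and the uniqueness of the Moore-Penrose inverse of a real matrix.

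For part (1), I would simply feed $L(a^+)=L(a)^+$ into the four equations (\ref{mpeqdefi}) written for the real matrix $A=L(a)$ with $X=A^+$: $AXA=A$ reads $L(a)L(a^+)L(a)=L(a)$, $XAX=X$ reads $L(a^+)L(a)L(a^+)=L(a^+)$, and $(AX)^T=AX$, $(XA)^T=XA$ read $\big(L(a)L(a^+)\big)^T=L(a)L(a^+)$ and $\big(L(a^+)L(a)\big)^T=L(a^+)L(a)$. Part (2) is verbatim the same argument with $R$ in place of $L$ and (\ref{raplus}) in place of (\ref{laplus}).

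Part (3) is where a little more is needed, since $(AB)^+=B^+A^+$ fails for general matrices. Set $M=L(a)R(b)$ and $X=L(a^+)R(b^+)$, and recall $L(a^+)=L(a)^+$, $R(b^+)=R(b)^+$. Because every matrix of the form $L(\cdot)$ commutes with every matrix of the form $R(\cdot)$, I can regroup the $L$- and $R$-factors in any product of such matrices. Hence, using parts (1) and (2),
$$MXM=\big(L(a)L(a)^+L(a)\big)\,\big(R(b)R(b)^+R(b)\big)=L(a)R(b)=M,$$
$$XMX=\big(L(a)^+L(a)L(a)^+\big)\,\big(R(b)^+R(b)R(b)^+\big)=L(a)^+R(b)^+=X.$$
For the symmetry conditions, $MX=L(a)L(a)^+\,R(b)R(b)^+$ is the product of two commuting symmetric matrices (each symmetric by (1), (2)), hence symmetric, so $(MX)^T=MX$; likewise $XM=L(a)^+L(a)\,R(b)^+R(b)$ is symmetric. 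Thus $X$ satisfies the four Moore-Penrose equations for $M$, and by uniqueness $X=M^+$, which is the assertion.

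The main (and essentially only) obstacle is to recognize that part (3) cannot be quoted from a general identity for $(AB)^+$ and instead must exploit the commutativity of $L$- and $R$-matrices; once that is in place the four verifications are mechanical, the single subtlety being the elementary fact that a product of two commuting symmetric matrices is symmetric, which secures the two transpose conditions. I would therefore present (3) in the order: reduce to checking (\ref{mpeqdefi}) for $M$, verify $MXM=M$ and $XMX=X$ by regrouping the $L$- and $R$-blocks, verify symmetry of $MX$ and $XM$, and invoke uniqueness.
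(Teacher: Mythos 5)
Your proposal is correct and follows exactly the route the paper intends: the paper states this proposition without proof, citing only Proposition \ref{pro2.3} and Definition \ref{lemueq}, and your argument (substituting $L(a^+)=L(a)^+$, $R(b^+)=R(b)^+$ into the Penrose equations, then using the $L$--$R$ commutation to verify the four equations for $L(a)R(b)$ and invoking uniqueness) is precisely the verification being left to the reader. Your explicit treatment of part (3), including the observation that a product of two commuting symmetric matrices is symmetric, supplies the one nontrivial step the paper glosses over.
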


By the By the theory of matrix, we have the following lemma.
\begin{lem}\label{general} Let $A\in \br^{m\times n},b\in
	\br^{m}$, linear equation $Ax=b$ is solvable if and only if $AA^{+}b=b$,  the general solution is\cite{daih}: $$x=A^{+}b+(E_n-A^{+}A)y,\, \forall y\in \br^{n}.$$
\end{lem}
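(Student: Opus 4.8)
The final statement to prove is Lemma~\ref{general}: the linear system $Ax=b$ with $A\in\br^{m\times n}$, $b\in\br^{m}$ is solvable if and only if $AA^{+}b=b$, and in that case the general solution is $x=A^{+}b+(E_n-A^{+}A)y$ for arbitrary $y\in\br^{n}$. The plan is to argue entirely with the four Moore--Penrose equations $AA^{+}A=A$, $A^{+}AA^{+}=A^{+}$, $(AA^{+})^{T}=AA^{+}$, $(A^{+}A)^{T}=A^{+}A$, which hold by the definition recalled just before the lemma.

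First I would prove the solvability criterion. If $AA^{+}b=b$, then $x_0:=A^{+}b$ is visibly a solution since $Ax_0=AA^{+}b=b$. Conversely, if $Ax=b$ for some $x$, then multiplying on the left by $AA^{+}$ and using $AA^{+}A=A$ gives $AA^{+}b=AA^{+}Ax=Ax=b$, so the condition is necessary. This handles the ``if and only if''.

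Next I would describe the solution set, assuming solvability. For the inclusion ``$\supseteq$'': any $x$ of the stated form satisfies $Ax=AA^{+}b+(A-AA^{+}A)y=AA^{+}b+0=b$, using $AA^{+}A=A$ and $AA^{+}b=b$. For the inclusion ``$\subseteq$'': if $Ax=b$, set $y:=x$; then $A^{+}b+(E_n-A^{+}A)x=A^{+}Ax+x-A^{+}Ax=x$, using $A^{+}b=A^{+}Ax$. Hence every solution is of the claimed form, and the description is complete.

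There is no real obstacle here; the only thing to be slightly careful about is that the symmetry equations $(AA^{+})^{T}=AA^{+}$ and $(A^{+}A)^{T}=A^{+}A$ are actually not needed for this particular lemma — only the two product identities $AA^{+}A=A$ and $A^{+}AA^{+}=A^{+}$ enter (in fact only the first one is used), so I would not invoke them. If desired, one can additionally remark that the map $y\mapsto(E_n-A^{+}A)y$ has image equal to the null space of $A$ (since $A^{+}A$ is idempotent with $A(E_n-A^{+}A)=0$ and conversely $Ay=0$ implies $(E_n-A^{+}A)y=y$), which re-expresses the general solution in the familiar ``particular solution plus homogeneous solutions'' form; this is optional and can be cited as standard.
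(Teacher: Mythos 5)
Your proof is correct and complete: the equivalence of solvability with $AA^{+}b=b$ and the description of the solution set both follow from $AA^{+}A=A$ exactly as you argue, and your remark that only this first Penrose equation is needed is accurate. The paper itself gives no proof of this lemma (it is cited from the matrix-theory reference), so your argument simply supplies the standard textbook proof that the authors omit; there is nothing to reconcile.
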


\begin{thm}\label{thm4.1} Let $a=a_0+a_1\be_1+a_2\be_2+a_3\be_3,\, b=b_0+b_1\be_1+b_2\be_2+b_3\be_3\in  Z(C\ell_2)-\{0\}$ and $d\in  C\ell_2$. Then the equation $axb=d$ is solvable if and only if
 \begin{equation}\label{caxb1}
	\frac{aa'db'b}{16(a_0^2+a_3^2)(b_0^2+b_3^2)}=d,
	\end{equation}  the general solution is:  \begin{equation}\label{saxb1}
	x=\frac{a'db'}{16(a_0^2+a_3^2)(b_0^2+b_3^2)}+y-\frac{a'aybb'}{16(a_0^2+a_3^2)(b_0^2+b_3^2)},\,\forall y\in C\ell_2.
	\end{equation}
\end{thm}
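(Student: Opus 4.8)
The plan is to translate the Clifford-algebra equation $axb=d$ into a matrix equation over $\br^4$ and then invoke the general solvability criterion for linear systems (Lemma \ref{general}). First I would observe that, by Proposition \ref{promat} and Proposition \ref{pro2.3}(4), the map $x\mapsto axb$ corresponds under the vectorization $x\mapsto\overrightarrow{x}$ to the matrix $L(a)R(b)$, since $\overrightarrow{axb}=L(a)\overrightarrow{xb}=L(a)R(b)\overrightarrow{x}$ (using that $L(a)$ and $R(b)$ commute). Hence $axb=d$ is equivalent to the real linear system $L(a)R(b)\,\overrightarrow{x}=\overrightarrow{d}$ with unknown $\overrightarrow{x}\in\br^4$.

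Next I would apply Lemma \ref{general} with $A=L(a)R(b)$ and right-hand side $\overrightarrow{d}$. The system is solvable iff $A A^{+}\overrightarrow{d}=\overrightarrow{d}$, and the general solution is $\overrightarrow{x}=A^{+}\overrightarrow{d}+(E_4-A^{+}A)\overrightarrow{y}$ for arbitrary $\overrightarrow{y}\in\br^4$. The key ingredient is Proposition \ref{mpprop}(3), which gives $A^{+}=\big(L(a)R(b)\big)^{+}=L(a^{+})R(b^{+})$. Since $a,b\in Z(C\ell_2)-\{0\}$, Theorem \ref{defition} supplies $a^{+}=\dfrac{a'}{4(a_0^2+a_3^2)}$ and $b^{+}=\dfrac{b'}{4(b_0^2+b_3^2)}$. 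Substituting these, the operator $AA^{+}$ acting on $\overrightarrow{d}$ corresponds to the Clifford element $a\,a^{+}\,d\,b^{+}\,b$, which equals $\dfrac{aa'db'b}{16(a_0^2+a_3^2)(b_0^2+b_3^2)}$; the solvability condition $AA^{+}\overrightarrow{d}=\overrightarrow{d}$ then reads exactly as \eqref{caxb1}. Likewise, the particular solution $A^{+}\overrightarrow{d}$ translates back to $a^{+}db^{+}=\dfrac{a'db'}{16(a_0^2+a_3^2)(b_0^2+b_3^2)}$, and the homogeneous part $(E_4-A^{+}A)\overrightarrow{y}$ translates to $y-a^{+}ayb b^{+}=y-\dfrac{a'aybb'}{16(a_0^2+a_3^2)(b_0^2+b_3^2)}$, so that the general solution is precisely \eqref{saxb1}.

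The one genuine subtlety — the step I expect to be the main obstacle — is ensuring the correspondence between matrix operations and Clifford multiplication is applied in the correct order and respects the ring isomorphism $L$ faithfully on $im(L)$. Concretely, one must check that $L(a)R(b)\cdot L(a^{+})R(b^{+})$, when reassembled, genuinely represents left-multiplication by $aa^{+}$ followed by right-multiplication by $b^{+}b$, using $R(b)R(b^{+})=R(b^{+}b)$ (Proposition \ref{pro2.3}(4), noting the order reversal for $R$) and the commutation $L(u)R(v)=R(v)L(u)$. One also needs $L$ to be injective on the relevant elements so that the matrix identity $AA^{+}\overrightarrow{d}=\overrightarrow{d}$ can be pulled back to the stated Clifford identity; this is exactly the ring-isomorphism content of Theorem \ref{ringiso}. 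After those bookkeeping points are settled, the rest is a direct substitution and the conclusion follows. A clean way to present it is: (i) vectorize to get the matrix system; (ii) quote Lemma \ref{general}; (iii) use Proposition \ref{mpprop}(3) and Theorem \ref{defition} to identify $A^{+}$ explicitly; (iv) undo the vectorization in each of the three expressions $AA^{+}\overrightarrow{d}$, $A^{+}\overrightarrow{d}$, $(E_4-A^{+}A)\overrightarrow{y}$ to recover \eqref{caxb1} and \eqref{saxb1}.
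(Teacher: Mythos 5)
Your proposal follows exactly the paper's own argument: vectorize $axb=d$ to $L(a)R(b)\overrightarrow{x}=\overrightarrow{d}$, apply Lemma \ref{general}, identify $\big(L(a)R(b)\big)^+=L(a^+)R(b^+)$ via Proposition \ref{mpprop}(3), and substitute the explicit formulas $a^+=\frac{a'}{4(a_0^2+a_3^2)}$, $b^+=\frac{b'}{4(b_0^2+b_3^2)}$ before translating back to Clifford form. The reasoning is correct and, if anything, spells out the bookkeeping (order reversal for $R$, injectivity of $L$ on its image) more carefully than the paper does.
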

\begin{proof}By Proposition\ref{pro2.3} and Theorem\ref{ringiso}, $axb=d$ is equivalent to $L(a)R(b)\overrightarrow{x}=\overrightarrow{d}$.  By Lemma\ref{general}, $axb=d$ is solvable if and only if $$L(a)R(b)\big(L(a)R(b)\big)^+\overrightarrow{d}=\overrightarrow{d}.$$    Returning to Clifford algebra form by Proposition\ref{mpprop}, we have $aa^+db^+b=d$. ¼´(\ref{caxb1}). By Lemma\ref{general},  the general solution of the equation $axb=d$ is $$\overrightarrow{x}=\big(L(a)R(b)\big)^+\overrightarrow{d}+\Big(E_4-\big(L(a)R(b)\big)^+L(a)R(b)\Big)\overrightarrow{y},\forall y\in C\ell_2.$$  Then the general solution can be expressed as $$x=a^+db^++(y-a^+aybb^+),\forall y\in C\ell_2.$$ That is(\ref{saxb1}).
\end{proof}

We have the following corollaries.

\begin{cor}\label{cor4.1}	
	Let $a=a_0+a_1\be_1+a_2\be_2+a_3\be_3\in  Z(C\ell_2)-\{0\}$ and $d\in  C\ell_2$.  Then the equation $ax=d$ is solvable if and only if $$\frac{aa'd}{4(a_0^2+a_3^2)}=d,$$ in which case all the solutions are given by $$x=\frac{a'd}{4(a_0^2+a_3^2)}+y-\frac{a'a}{4(a_0^2+a_3^2)}y,\,\forall y\in C\ell_2.$$
\end{cor}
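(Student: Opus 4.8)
The plan is to derive Corollary~\ref{cor4.1} as the special case $b=1$ of Theorem~\ref{thm4.1}. First I would note that $1\in C\ell_2$ has $H_1=1\neq 0$, so $1\notin Z(C\ell_2)$; hence Theorem~\ref{thm4.1} does not literally apply with $b=1$, and instead I would run the same argument directly. Specifically, the equation $ax=d$ is equivalent, via Proposition~\ref{pro2.3} and Theorem~\ref{ringiso}, to the real linear system $L(a)\overrightarrow{x}=\overrightarrow{d}$. Applying Lemma~\ref{general} to $A=L(a)$, the system is solvable if and only if $L(a)L(a)^+\overrightarrow{d}=\overrightarrow{d}$, with general solution $\overrightarrow{x}=L(a)^+\overrightarrow{d}+(E_4-L(a)^+L(a))\overrightarrow{y}$ for arbitrary $y\in C\ell_2$.

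Next I would translate this back into $C\ell_2$. By Proposition~\ref{laplus} we have $L(a)^+=L(a^+)$, so $L(a)L(a)^+=L(a)L(a^+)=L(aa^+)$ and $L(a)^+L(a)=L(a^+a)$ by Proposition~\ref{pro2.3}(4). Thus the solvability condition $L(aa^+)\overrightarrow{d}=\overrightarrow{d}$ becomes $aa^+d=d$, and the general solution becomes $x=a^+d+y-a^+ay$ for all $y\in C\ell_2$. Finally, since $a\in Z(C\ell_2)-\{0\}$, Theorem~\ref{defition} gives $a^+=\dfrac{a'}{4(a_0^2+a_3^2)}$; substituting this expression for $a^+$ yields precisely the stated solvability condition $\dfrac{aa'd}{4(a_0^2+a_3^2)}=d$ and the stated solution set $x=\dfrac{a'd}{4(a_0^2+a_3^2)}+y-\dfrac{a'a}{4(a_0^2+a_3^2)}y$.

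There is no serious obstacle here; the only point requiring a little care is bookkeeping the order of multiplication when passing between $C\ell_2$ and $Mat(4,\br)$, i.e.\ using $L(ab)=L(a)L(b)$ (Proposition~\ref{pro2.3}(4)) consistently so that the noncommutativity of $C\ell_2$ is respected — in particular keeping $a^+a$ on the left of $y$ and not collapsing it with $aa^+$. One could alternatively phrase the whole corollary as the $b=1$ instance of the proof of Theorem~\ref{thm4.1} (replacing the factor $R(b)$ by $E_4$), which makes it immediate; I would likely present it in the short self-contained form above and simply remark that it follows from the proof of Theorem~\ref{thm4.1} with $b=1$.
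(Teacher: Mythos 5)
Your proposal is correct and matches the paper's intent: the paper states Corollary~\ref{cor4.1} without a separate proof, as an instance of the argument for Theorem~\ref{thm4.1} (replacing $R(b)$ by $E_4$), which is exactly what you carry out. Your remark that $b=1\notin Z(C\ell_2)$ so the theorem's hypotheses do not literally apply, and that one must therefore rerun the argument rather than merely substitute $b=1$, is a worthwhile point of care that the paper glosses over.
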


\begin{cor}\label{cor4.2}	
	Let $a=a_0+a_1\be_1+a_2\be_2+a_3\be_3\in  Z(C\ell_2)-\{0\}$ and $d\in  C\ell_2$.  Then the equation $ax=0$ is solvable:  $$x=y-\frac{a'a}{4(a_0^2+a_3^2)}y,\,\forall y\in C\ell_2.$$
\end{cor}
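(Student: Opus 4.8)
The plan is to derive Corollary~\ref{cor4.2} as the homogeneous ($d=0$) special case of Corollary~\ref{cor4.1}, which in turn is the $b=1$ special case of Theorem~\ref{thm4.1}. First I would observe that setting $b=\be_0=1$ in Theorem~\ref{thm4.1} gives $R(b)=R(\be_0)=E_4$ by Proposition~\ref{pro2.3}(1), so the matrix equation $L(a)R(b)\overrightarrow{x}=\overrightarrow{d}$ collapses to $L(a)\overrightarrow{x}=\overrightarrow{d}$, i.e. $ax=d$. Since $b=1$ is not a zero divisor one cannot literally quote Theorem~\ref{thm4.1}, so I would instead re-run its short argument: by Proposition~\ref{pro2.3} and Theorem~\ref{ringiso} the equation $ax=d$ is equivalent to $L(a)\overrightarrow{x}=\overrightarrow{d}$, and Lemma~\ref{general} applies directly with $A=L(a)$, $b=\overrightarrow{d}$, $n=4$.

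Next I would translate the general solution of Lemma~\ref{general}, namely $\overrightarrow{x}=L(a)^+\overrightarrow{d}+(E_4-L(a)^+L(a))\overrightarrow{y}$, back into $C\ell_2$. Using $L(a^+)=L(a)^+$ (equation~(\ref{laplus})) and the multiplicativity $L(a)L(b)=L(ab)$ from Proposition~\ref{pro2.3}(4), this becomes $x=a^+d+(y-a^+ay)$ for all $y\in C\ell_2$. For $a\in Z(C\ell_2)-\{0\}$ Theorem~\ref{defition} gives $a^+=\dfrac{a'}{4(a_0^2+a_3^2)}$, so $a^+d=\dfrac{a'd}{4(a_0^2+a_3^2)}$ and $a^+a=\dfrac{a'a}{4(a_0^2+a_3^2)}$, yielding exactly the formulas claimed in Corollary~\ref{cor4.1}; the solvability condition $aa^+d=d$ becomes $\dfrac{aa'd}{4(a_0^2+a_3^2)}=d$ after multiplying through.

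Finally, for Corollary~\ref{cor4.2} I would simply set $d=0$. The solvability condition $\dfrac{aa'd}{4(a_0^2+a_3^2)}=d$ is then satisfied trivially ($0=0$), so $ax=0$ is always solvable, and the general solution formula degenerates to $x=y-\dfrac{a'a}{4(a_0^2+a_3^2)}y$ for all $y\in C\ell_2$, which is the asserted description of the (left) null set of $a$. I do not anticipate a genuine obstacle here: the only point requiring care is that one must not invoke Theorem~\ref{thm4.1} verbatim (its hypothesis forces $b\in Z(C\ell_2)-\{0\}$, whereas here $b=1$), but rather reuse its proof with $R(b)=E_4$; everything else is a routine substitution into Lemma~\ref{general} together with the dictionary between $C\ell_2$ and $im(L)$ supplied by Theorem~\ref{ringiso} and Proposition~\ref{mpprop}.
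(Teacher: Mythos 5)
Your proposal is correct and follows exactly the route the paper intends: the paper states Corollary~\ref{cor4.2} without proof as an immediate specialization of Theorem~\ref{thm4.1} (via Corollary~\ref{cor4.1} with $b=1$ and then $d=0$), which is precisely what you carry out. Your remark that the hypothesis $b\in Z(C\ell_2)-\{0\}$ of Theorem~\ref{thm4.1} excludes $b=1$, so that one must rerun the argument with $R(1)=E_4$ rather than quote the theorem verbatim, is a point of care the paper glosses over.
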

		
\begin{cor}\label{cor4.3}
 Let $b=b_0+b_1\be_1+b_2\be_2+b_3\be_3\in  Z(C\ell_2)-\{0\}$ and $d\in  C\ell_2$.  Then the equation $xb=d$ is solvable if and only if $$\frac{db'b}{4(b_0^2+b_3^2)}=d,$$ in which case all the solutions are given by $$x=\frac{db'}{4(b_0^2+b_3^2)}+y-\frac{ybb'}{4(b_0^2+b_3^2)},\,\forall y\in C\ell_2.$$
\end{cor}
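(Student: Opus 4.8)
The plan is to follow the proof of Theorem \ref{thm4.1} essentially verbatim, with $a$ specialized to $1$. Although $1 \notin Z(C\ell_2)$ so the \emph{statement} of Theorem \ref{thm4.1} does not apply directly, its \emph{argument} does: the hypothesis $a\in Z(C\ell_2)-\{0\}$ was only used there to invoke Theorem \ref{defition} for $a^+$, and here $1^+=1$ since $H_1=1\neq 0$. First I would record that the hypotheses force $b_0^2+b_3^2\neq 0$: since $b\in Z(C\ell_2)$ we have $H_b=b_0^2-b_1^2-b_2^2+b_3^2=0$, so if $b_0^2+b_3^2=0$ then also $b_1^2+b_2^2=0$, contradicting $b\neq 0$. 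Hence by Theorem \ref{defition}, $b^+=\frac{b'}{4(b_0^2+b_3^2)}$, and all denominators below are legitimate.

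By Proposition \ref{pro2.3} and Theorem \ref{ringiso}, the equation $xb=d$ is equivalent to $R(b)\overrightarrow{x}=\overrightarrow{d}$. By Lemma \ref{general} this is solvable if and only if $R(b)R(b)^+\overrightarrow{d}=\overrightarrow{d}$, with general solution $\overrightarrow{x}=R(b)^+\overrightarrow{d}+\big(E_4-R(b)^+R(b)\big)\overrightarrow{y}$ for arbitrary $y\in C\ell_2$. Now $R(b)^+=R(b^+)$ by \eqref{raplus} (equivalently, Proposition \ref{mpprop}(3) with the trivial factor $L(1)=E_4$). Using Proposition \ref{pro2.3}(4), namely $R(uv)=R(v)R(u)$, we obtain $R(b)R(b)^+=R(b)R(b^+)=R(b^+b)$ and $R(b)^+R(b)=R(b^+)R(b)=R(bb^+)$. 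Translating $R(b^+b)\overrightarrow{d}=\overrightarrow{d}$ back to $C\ell_2$ via Proposition \ref{promat} gives $d\,b^+b=d$, which upon substituting $b^+=\frac{b'}{4(b_0^2+b_3^2)}$ is precisely $\frac{db'b}{4(b_0^2+b_3^2)}=d$.

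For the solution set, $R(b)^+\overrightarrow{d}=R(b^+)\overrightarrow{d}$ corresponds to $db^+$, and $\big(E_4-R(bb^+)\big)\overrightarrow{y}$ corresponds to $y-y(bb^+)$; hence $x=db^+ + y - ybb^+=\frac{db'}{4(b_0^2+b_3^2)}+y-\frac{ybb'}{4(b_0^2+b_3^2)}$, as claimed. The only point requiring care is the order reversal $R(uv)=R(v)R(u)$ when passing between matrix identities and Clifford-algebra identities: getting this right is exactly what makes the factor $b^+b$ (rather than $bb^+$) appear in the solvability condition and $bb^+$ appear in the homogeneous part of the parametrization. Everything else is a routine substitution of $b^+=\frac{b'}{4(b_0^2+b_3^2)}$.
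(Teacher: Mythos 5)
Your proposal is correct and follows essentially the route the paper intends: the corollary is just the argument of Theorem \ref{thm4.1} rerun with $a=1$, reducing $xb=d$ to $R(b)\overrightarrow{x}=\overrightarrow{d}$ and applying Lemma \ref{general} together with $R(b)^+=R(b^+)$ and the order reversal $R(uv)=R(v)R(u)$. Your added observations --- that $1\notin Z(C\ell_2)$ so one must reuse the proof rather than the statement of Theorem \ref{thm4.1}, and that $b_0^2+b_3^2\neq 0$ follows from $H_b=0$ and $b\neq 0$ --- are correct and in fact tighten details the paper leaves implicit.
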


\begin{cor}\label{cor4.4}
Let $b=b_0+b_1\be_1+b_2\be_2+b_3\be_3\in Z(C\ell_2)-\{0\}$.  Then the equation$xb=0$ is solvable: $$x=y-\frac{ybb'}{4(b_0^2+b_3^2)},\,\forall y\in C\ell_2.$$
\end{cor}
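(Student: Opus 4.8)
The statement to prove is Corollary \ref{cor4.4}: for $b \in Z(C\ell_2) - \{0\}$, the equation $xb = 0$ is solvable with general solution $x = y - \frac{ybb'}{4(b_0^2+b_3^2)}$ for all $y \in C\ell_2$.

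The plan is to derive this as a direct specialization of Corollary \ref{cor4.3}, which handles $xb = d$. First I would set $d = 0$ in Corollary \ref{cor4.3}. The solvability condition there reads $\frac{db'b}{4(b_0^2+b_3^2)} = d$, which becomes $0 = 0$, so the equation $xb = 0$ is always solvable — in particular $x = 0$ works, confirming it is a genuine (homogeneous) solvable system. Then the general solution formula from Corollary \ref{cor4.3}, namely $x = \frac{db'}{4(b_0^2+b_3^2)} + y - \frac{ybb'}{4(b_0^2+b_3^2)}$, collapses upon setting $d = 0$ to exactly $x = y - \frac{ybb'}{4(b_0^2+b_3^2)}$, which is the claimed answer.

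Alternatively, and more self-containedly, I would translate $xb = 0$ via the ring isomorphism of Theorem \ref{ringiso}: since $\overrightarrow{xb} = R(b)\overrightarrow{x}$, the equation is equivalent to the real linear system $R(b)\overrightarrow{x} = \overrightarrow{0}$. By Lemma \ref{general} this homogeneous system is always solvable, with general solution $\overrightarrow{x} = (E_4 - R(b)^+ R(b))\overrightarrow{y}$ for arbitrary $\overrightarrow{y} \in \br^4$. Using $R(b^+) = R(b)^+$ (equation \eqref{raplus}) together with $R(b)R(b^+) = R(b^+ b)$ — here I would use that for $b \in Z(C\ell_2)-\{0\}$ one has $b^+ = \frac{b'}{4(b_0^2+b_3^2)}$ by Theorem \ref{defition} — the term $R(b)^+ R(b) = R(b^+)R(b) = R(bb^+)$ corresponds in Clifford form to right multiplication by $bb^+ = \frac{bb'}{4(b_0^2+b_3^2)}$. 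Returning to $C\ell_2$ form then gives $x = y - \frac{ybb'}{4(b_0^2+b_3^2)}$.

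There is essentially no obstacle here: the result is a one-line corollary of Corollary \ref{cor4.3} (itself a corollary of Theorem \ref{thm4.1}). The only point requiring a word of care is the order of multiplication when passing between $R(b^+)R(b)$ and the Clifford-algebra expression — by Proposition \ref{pro2.3}(3), $R(uv) = R(v)R(u)$, so $R(bb^+)$, not $R(b^+ b)$, is what appears, which is why the solution has $bb'$ on the right of $y$ rather than $b'b$. I would simply state that the corollary follows by taking $d = 0$ in Corollary \ref{cor4.3}.
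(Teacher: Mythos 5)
Your proposal is correct and matches the paper's (implicit) argument: the paper states Corollary \ref{cor4.4} without proof as an immediate consequence of the $xb=d$ case, and setting $d=0$ in Corollary \ref{cor4.3} gives exactly the claimed solvability and solution formula. Your remark on the multiplication order ($R(bb^+)=R(b^+)R(b)$ via $R(uv)=R(v)R(u)$, so the factor $bb'$ sits to the right of $y$) is also the right point to flag.
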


We provide some examples as follows.
\begin{exam}
 Let $a=1+\be_2,\,b=\be_1+\be_3,\,d=1+\be_2$. Then $$a^+=\frac{1+\be_2}{4},\,\,b^+=\frac{\be_1-\be_3}{4},\,\,aa^+=a^+a=b^+b=\frac{1+\be_2}{2},\,\,bb^+=\frac{1-\be_2}{2},\,\,aa^+db^+b=d.$$ This case belongs to Theorem\ref{thm4.1}.  The solutions of $axb=d$  are given by
$$x=\frac{\be_1-\be_3}{4}+y-\frac{(1+\be_2)y(1-\be_2)}{4},\,\forall y\in C\ell_2.$$
\end{exam}

\begin{exam}
Let $a=1-\be_2,\,d=1+\be_1-\be_2+\be_3$. Then $$a^+=\frac{1-\be_2}{4},\,\,aa^+=a^+a=\frac{1-\be_2}{2},\,\,aa^+d=d. $$  This case belongs to Corollary\ref{cor4.1}.  The solutions of $ax=d$ are given by
$$x=\frac{1+\be_1-\be_2+\be_3}{2}+y-\frac{(1-\be_2)y}{2},\forall y\in C\ell_2.$$
\end{exam}

\begin{exam}
Let $a=1+\be_1+\be_2+\be_3$. Then $$ a^+a=\frac{1+\be_2}{2}.$$  This case belongs to Corollary\ ref{cor4.2}.  The solutions of $ax=0$ are given by
$$x=y-\frac{(1+\be_2)y}{2},\,\forall y\in C\ell_2.$$
\end{exam}

\begin{exam}
Let $b=\be_2+\be_3,\,d=1-\be_1$, Then $$b^+=\frac{\be_2-\be_3}{4},\,\,b^+b=\frac{1-\be_1}{2},\,\,bb^+=\frac{1+\be_1}{2},\,\,db^+b=d.$$
This case belongs to Corollary\ref{cor4.3}.   The solutions of $xb=d$ are given by
$$x=\frac{\be_2-\be_3}{2}+y-\frac{y(1+\be_1)}{2},\,\forall y\in C\ell_2.$$
\end{exam}

\begin{exam}
Let $b=2+\be_1+2\be_2+\be_3$. Then $$bb^+=\frac{1}{2}+\frac{2}{5}\be_1+\frac{3}{10}\be_2.$$
This case belongs to Corollary\ref{cor4.4}.  The solutions of $xb=0$ are given by
$$x=y-\frac{y(5+4\be_1+3\be_2)}{10},\,\forall y\in C\ell_2.$$
\end{exam}

\section{Similarity and Pseudosimilarity}\label{simsec}

It is well known that two quaternions are similar if and only if they have the same norm and real part. Such relationships were extended to other algebra systems.  In this section, we will studied the necessary and sufficient conditions for two elements in $C\ell_2$  to be similar and pseudosimilar.

\begin{defi}
	If $a,b\in C\ell_2$ are similar, then exist an element $u\in C\ell_2-Z(C\ell_2)$ such that $$au=ub.$$
\end{defi}

\begin{defi}
	If $a,b\in C\ell_2$ are pseudosimilar, then exist an element $u\in C\ell_2-Z(C\ell_2)$ such that $$au=\overline{u}b.$$

\end{defi}

Since $(L(a)-R(b))\overrightarrow{x}=\overrightarrow{0}$ is equivalent to
\begin{equation}\label{eq3}ax=xb.\end{equation}
Thus, we need studied $(L(a)-R(b))\overrightarrow{x}=\overrightarrow{0}$.

\begin{pro}\label{pro5.1}
The eigenvalues of $L(a)$ are $$\lambda_{1,2}=a_0+\sqrt{G(a)}$$ and $$\lambda_{3,4}=a_0-\sqrt{G(a)}.$$
\end{pro}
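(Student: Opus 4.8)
The plan is to compute the characteristic polynomial of $L(a)$ directly from the explicit $4\times 4$ matrix in \eqref{la} and factor it. First I would write $L(a) = a_0 E_4 + M$ where $M = L(a) - a_0 E_4$ is the matrix obtained by deleting all the $a_0$ entries; since $M$ is exactly $a_1 L(\be_1) + a_2 L(\be_2) + a_3 L(\be_3)$ (the image of the imaginary part $\Im(a)$ under $L$), the eigenvalues of $L(a)$ are $a_0$ plus the eigenvalues of $M = L(\Im(a))$. So it suffices to find the eigenvalues of $L(c)$ for $c = a_1\be_1 + a_2\be_2 + a_3\be_3$.

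Next I would use the ring-homomorphism property from Proposition \ref{pro2.3}(4): $L(c)^2 = L(c^2)$. A short computation with the multiplication table (Table~1) gives $c^2 = a_1^2 + a_2^2 - a_3^2 = G(a)$, a real scalar, so $L(c)^2 = G(a) E_4$. Hence the minimal polynomial of $L(c)$ divides $t^2 - G(a)$, which means every eigenvalue of $L(c)$ is $\pm\sqrt{G(a)}$, and correspondingly every eigenvalue of $L(a)$ is $a_0 \pm \sqrt{G(a)}$. To pin down the multiplicities as stated (each eigenvalue with multiplicity two), I would note $\operatorname{tr} L(c) = 0$ from \eqref{la}, forcing the two values $\sqrt{G(a)}$ and $-\sqrt{G(a)}$ to occur with equal multiplicity, i.e. two each; equivalently one checks $\det(tE_4 - L(c)) = (t^2 - G(a))^2$ by expanding directly. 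Either route gives $\lambda_{1,2} = a_0 + \sqrt{G(a)}$ and $\lambda_{3,4} = a_0 - \sqrt{G(a)}$.

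I expect the only mild subtlety is bookkeeping: when $G(a) = 0$ the matrix $L(c)$ is nilpotent of index at most two (not necessarily diagonalizable), so "eigenvalues" must be understood with algebraic multiplicity, and when $G(a) < 0$ the square root is imaginary and $L(a)$ has genuinely complex eigenvalues — both cases are still covered by the characteristic-polynomial identity $\det(tE_4 - L(a)) = \big((t-a_0)^2 - G(a)\big)^2$, which is the cleanest single statement to prove and from which the displayed formulas read off immediately. The main obstacle, such as it is, is just carrying out the $4\times 4$ determinant expansion (or the $c^2 = G(a)$ computation) carefully with the signs in Table~1; there is no conceptual difficulty. I would therefore structure the proof around establishing $L(\Im(a))^2 = G(a)E_4$ via $L(ab)=L(a)L(b)$, then combining with $\operatorname{tr} L(\Im(a)) = 0$ to conclude.
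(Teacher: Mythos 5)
Your proposal is correct, and it reaches the same quadratic as the paper --- note that $H_a=a_0^2-G(a)$, so your $(\lambda-a_0)^2-G(a)$ is exactly the paper's $\lambda^2-2\lambda a_0+H_a$ --- but by a genuinely different route. The paper observes that $L(\bar a)=DL(a)^TD$ is similar to $L(a)^T$, hence any eigenvalue $\lambda$ of $L(a)$ is also an eigenvalue of $L(\bar a)$; it then multiplies the two characteristic matrices and uses $L(ab)=L(a)L(b)$ to get $\det\big((\lambda^2-2\lambda a_0+H_a)E_4\big)=0$, from which the eigenvalues follow. You instead split off the scalar part, establish $L(\mathrm{Cim}(a))^2=G(a)E_4$ from $c^2=G(a)$ and the same multiplicativity, and read off the spectrum from the minimal polynomial together with $\operatorname{tr}L(\mathrm{Cim}(a))=0$. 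Your version has a concrete advantage: the paper simply asserts that ``each eigenvalue occurs with algebraic multiplicity $2$'' without justification, whereas your trace argument (or the direct identity $\det(tE_4-L(a))=\big((t-a_0)^2-G(a)\big)^2$) actually pins the multiplicities down, and you correctly flag the degenerate case $G(a)=0$ where $L(\mathrm{Cim}(a))$ may be nilpotent rather than diagonalizable. The paper's approach, for its part, generalizes more directly to the companion statement for $R(b)$ (Lemma \ref{lem5.1}) since it never uses the explicit matrix entries beyond the relation $L(\bar a)=DL(a)^TD$.
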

\begin{proof}Let $\lambda$ is the eigenvalues of $L(a)$, Then $\det(\lambda E_4-L(a))=0$. Since $L(\bar{a})=DL(a)^{T}D$, such a $\lambda$ is also a eigenvalue of $L(\bar{a})$. i.e. $\det(\lambda E_4-L(\bar{a}))=0$.
we have $$\det((\lambda E_4-L(a))(\lambda E_4-L(\bar{a})))=0.$$   By Proposition\ref{pro2.3}, we have $$\det(\lambda^2E_4-\lambda(L(a+\overline{a}))+L(a\overline{a}))=0.$$ That is $$\det((\lambda^2-2\lambda a_0+H_a)E_4)=0.$$
Hence\begin{equation}\label{eq4}\lambda^2-2\lambda a_0+H_a=0.\end{equation}
The solutions of the equation(\ref{eq4}) are  the eigenvalues of $L(a)$, and each eigenvalue occurs with algebraic multiplicity 2.
\end{proof}

Similarly, we have the following lemma.
\begin{lem}\label{lem5.1}
The eigenvalues of $R(b)$ are given by $$\omega_{1,2}=b_0+\sqrt{G(b)}$$ and $$\omega_{3,4}=b_0-\sqrt{G(b)}.$$
\end{lem}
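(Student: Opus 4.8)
The plan is to mirror, almost verbatim, the argument used for $L(a)$ in Proposition~\ref{pro5.1}, exploiting the symmetry between the left- and right-multiplication representations. First I would recall from Proposition~\ref{pro2.2} the identity $R(\bar b)=DR(b)^TD$, which is the exact counterpart of $L(\bar a)=DL(a)^TD$ used before. From this, if $\omega$ is an eigenvalue of $R(b)$, then $\det(\omega E_4-R(b))=0$, and since $R(\bar b)=DR(b)^TD$ is similar to $R(b)^T$, which has the same characteristic polynomial as $R(b)$, the scalar $\omega$ is also an eigenvalue of $R(\bar b)$, i.e. $\det(\omega E_4-R(\bar b))=0$.

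Next I would multiply these two vanishing determinants: $\det\bigl((\omega E_4-R(b))(\omega E_4-R(\bar b))\bigr)=0$. Expanding the product and using Proposition~\ref{pro2.3}(3)--(4) — specifically $R(b)R(\bar b)=R(\bar b b)=R(H_b)=H_b E_4$ and $R(b)+R(\bar b)=R(b+\bar b)=R(2b_0)=2b_0E_4$ — the matrix inside the determinant collapses to $(\omega^2-2b_0\omega+H_b)E_4$. Hence $\det\bigl((\omega^2-2b_0\omega+H_b)E_4\bigr)=(\omega^2-2b_0\omega+H_b)^4=0$, so every eigenvalue of $R(b)$ satisfies the quadratic $\omega^2-2b_0\omega+H_b=0$. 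Solving, $\omega=b_0\pm\sqrt{b_0^2-H_b}$, and since $H_b=b_0^2-b_1^2-b_2^2+b_3^2$ gives $b_0^2-H_b=b_1^2+b_2^2-b_3^2=G(b)$, we get $\omega_{1,2}=b_0+\sqrt{G(b)}$ and $\omega_{3,4}=b_0-\sqrt{G(b)}$.

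To finish I would note that each root must occur with algebraic multiplicity exactly $2$: the characteristic polynomial of $R(b)$ has degree $4$, it is annihilated (as a factor relation, via the determinant computation) by the square of the quadratic $\omega^2-2b_0\omega+H_b$, and since $\det(R(b))=H_b^2$ equals the product of the four eigenvalues while the quadratic has constant term $H_b$, the only possibility consistent with a degree-$4$ monic polynomial is $\bigl(\omega^2-2b_0\omega+H_b\bigr)^2$; thus the two values each have multiplicity $2$ (coalescing when $G(b)=0$). The one genuinely delicate point — and the place I would be most careful — is the passage from ``$\omega$ is a root of the product of the two determinants'' to ``$\omega$ is a root of $\omega^2-2b_0\omega+H_b$'': this step is clean here only because $R(b)$ and $R(\bar b)$ commute (both lie in the commutative image $R(C\ell_2)$ by Proposition~\ref{pro2.3}), so the product $(\omega E_4-R(b))(\omega E_4-R(\bar b))$ really does simplify to a scalar matrix rather than merely having vanishing determinant; I would state this commutativity explicitly. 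Everything else is the routine algebra already carried out for $L(a)$, so I would simply write ``the proof is identical to that of Proposition~\ref{pro5.1}, with $R$ in place of $L$ and Proposition~\ref{pro2.2}'s second identity in place of the first.''
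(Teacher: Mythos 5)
Your proposal is correct and follows exactly the route the paper intends: the paper proves Lemma~\ref{lem5.1} only by the word ``Similarly,'' deferring to the argument of Proposition~\ref{pro5.1}, and your proof is precisely that argument transposed to $R$ via $R(\bar b)=DR(b)^TD$ and $R(b)R(\bar b)=R(\bar b b)=H_bE_4$. Your added remarks on multiplicity and on why the product collapses to a scalar matrix go slightly beyond what the paper records, but they do not change the approach.
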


 \begin{pro}\label{proped}
Let $F(a,b)=L(a)-R(b)$. Then the eigenvalues of$F(a,b)$ are
\begin{equation*}\lambda_{1,2,3,4}=a_0-b_0\pm \Big(\sqrt{G(a)}\pm \sqrt{G(b)}\Big),\end{equation*}
and
\begin{equation*}\det(F(a,b))=(a_0-b_0)^4-2(a_0-b_0)^2\Big(G(a)+G(b)\Big)+\Big(G(a)-G(b)\Big)^2.\end{equation*}
If $a_0=b_0$, $G(a)=G(b)$, then $\det(F(a,b))=0$, rank$(F(a,b))=2$.\\
If $a_0\neq b_0$, $\det(F(a,b))=0$, then $G(a), G(b)\geq0$ and rank$(F(a,b))=3$.
 \end{pro}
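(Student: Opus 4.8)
The plan is to compute the characteristic polynomial of $F(a,b)=L(a)-R(b)$ directly and then read off everything else from it. Since $L(a)$ and $R(b)$ commute by Proposition \ref{pro2.3}(4), they are simultaneously triangularizable (over $\bc$), so the eigenvalues of $F(a,b)=L(a)-R(b)$ are differences $\lambda_i(a)-\omega_j(b)$ of the eigenvalues of $L(a)$ and $R(b)$ — but one must pair them correctly, i.e. identify which eigenvalue of $L(a)$ sits in the same common eigenvector (or common invariant flag slot) as which eigenvalue of $R(b)$. First I would make that pairing explicit. From Proposition \ref{pro5.1} and Lemma \ref{lem5.1}, $L(a)$ has eigenvalues $a_0\pm\sqrt{G(a)}$ each with multiplicity $2$, and $R(b)$ has $b_0\pm\sqrt{G(b)}$ each with multiplicity $2$. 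The claimed eigenvalue list $a_0-b_0\pm(\sqrt{G(a)}\pm\sqrt{G(b)})$ is exactly the four sign combinations, so the content is that on the $2$-dimensional $(+)$-eigenspace of $L(a)$, $R(b)$ acts with both eigenvalues $b_0\pm\sqrt{G(b)}$ (and likewise on the $(-)$-eigenspace). I would verify this by using the basis-$(1,\be_2)$ decomposition $C\ell_2=\bc+\bc\be_2$: in this picture $L(a)$ and $R(b)$ become $2\times 2$ blocks over $\bc$ (or can be analyzed via $\be_2 z=\bar z\be_2$), and a short computation shows the $R(b)$-action on each $L(a)$-eigenspace is itself a $2\times 2$ block whose two eigenvalues are $b_0\pm\sqrt{G(b)}$.

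Next, having the eigenvalue list, the determinant is just the product of the four eigenvalues:
\begin{equation*}
\det(F(a,b))=\prod_{\epsilon,\delta\in\{+,-\}}\Big((a_0-b_0)+\epsilon\sqrt{G(a)}+\delta\sqrt{G(b)}\Big).
\end{equation*}
Grouping the $\epsilon=+$ with $\epsilon=-$ factors in pairs gives $\big((a_0-b_0)^2-(\sqrt{G(a)}+\sqrt{G(b)})^2\big)\big((a_0-b_0)^2-(\sqrt{G(a)}-\sqrt{G(b)})^2\big)$, and expanding (the cross terms involving $\sqrt{G(a)G(b)}$ cancel) yields
\begin{equation*}
\det(F(a,b))=(a_0-b_0)^4-2(a_0-b_0)^2\big(G(a)+G(b)\big)+\big(G(a)-G(b)\big)^2,
\end{equation*}
which is the stated formula. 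Note this is manifestly a polynomial in $a_0-b_0$, $G(a)$, $G(b)$ with no surds, so it is valid even when $G(a)$ or $G(b)$ is negative; this is worth remarking since the eigenvalue formulas a priori only make sense over $\bc$.

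Then I would handle the two rank claims. For $a_0=b_0$ and $G(a)=G(b)$: plugging in makes all four eigenvalues equal to $0$ or to $\pm 2\sqrt{G(a)}$ in pairs — precisely, two of them vanish — so $\det=0$; to pin down $\operatorname{rank}(F(a,b))=2$ I would argue that the generalized $0$-eigenspace has dimension exactly $2$, using that $L(a)$ and $R(b)$ are simultaneously diagonalizable when $G(a)\neq 0$ (distinct eigenvalues on each factor) so $F$ is diagonalizable with exactly two zero eigenvalues, and separately checking the degenerate case $G(a)=G(b)=0$ by a direct rank computation on the explicit $4\times4$ matrices $L(a),R(b)$ (or by noting $L(a)-R(b)$ with $a_0=b_0$ reduces to $L(\operatorname{Cim}a)-R(\operatorname{Cim}b)$, a matrix whose square is $\big(G(a)-G(b)\big)$-ish plus cross terms). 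For $a_0\neq b_0$ with $\det(F(a,b))=0$: exactly one of the four factors vanishes, say $(a_0-b_0)+\epsilon\sqrt{G(a)}+\delta\sqrt{G(b)}=0$; since $a_0-b_0$ is a nonzero real, this forces $\sqrt{G(a)}$ and $\sqrt{G(b)}$ to combine to a nonzero real, which (given that $G(a),G(b)$ are real) forces $G(a)\ge 0$ and $G(b)\ge 0$ — here I would rule out the possibility that one of them is negative and the two imaginary parts cancel, because then $a_0-b_0=0$, a contradiction. With $G(a),G(b)\ge 0$ and only one eigenvalue zero, the three remaining eigenvalues are nonzero (they differ from the zero one by $2\sqrt{G(a)}$, $2\sqrt{G(b)}$, or their sum, and a short case check using $a_0\ne b_0$ shows none of these can also vanish unless $G(a)=G(b)=0$, which with $a_0\ne b_0$ gives $\det\ne 0$); hence $\operatorname{rank}(F(a,b))=3$. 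The main obstacle is the correct simultaneous-eigenvector pairing of $L(a)$ and $R(b)$ and, relatedly, the careful treatment of the non-semisimple boundary cases ($G(a)=0$ or $G(b)=0$, where $L(a)$ or $R(b)$ may fail to be diagonalizable), since there the rank statements need an honest computation on the $4\times4$ matrices rather than an eigenvalue count.
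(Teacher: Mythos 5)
The paper states this proposition without proof, so there is nothing to compare against line by line; your overall strategy (commutativity of $L(a)$ and $R(b)$, simultaneous triangularization, eigenvalue pairing, determinant as the product of eigenvalues) is the natural one, and your derivation of the determinant formula and of $G(a),G(b)\ge 0$ from $\det(F(a,b))=0$, $a_0\neq b_0$ is correct. Two remarks on the eigenvalue step: the pairing claim (that all four sign combinations occur) is the real content and you only assert it; moreover the eigenspace argument you sketch breaks down exactly when $G(a)=0$ or $G(b)=0$, where $L(a)$ or $R(b)$ has no $2$-dimensional eigenspaces. A cleaner route that avoids the pairing issue entirely is to prove the determinant identity first (a polynomial identity in the $a_i,b_j$, checkable directly or on the dense set where everything is diagonalizable) and then observe that $\lambda E_4-F(a,b)=-\big(L(a-\lambda)-R(b)\big)$ with $G(a-\lambda)=G(a)$, so the characteristic polynomial is $\det F(a-\lambda,b)$ and the eigenvalues drop out with no pairing argument at all.

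The genuine gap is in your rank-$3$ argument. You claim that for $a_0\neq b_0$ and $\det(F(a,b))=0$ "exactly one of the four factors vanishes," the other three eigenvalues being nonzero "unless $G(a)=G(b)=0$." That case check is wrong: the spacings between eigenvalues are $2\sqrt{G(a)}$, $2\sqrt{G(b)}$ and $2\sqrt{G(a)}\pm 2\sqrt{G(b)}$, so already $G(a)=0$ alone (with $G(b)=(a_0-b_0)^2>0$) makes the zero eigenvalue have algebraic multiplicity $2$. This is not a vacuous subcase — it is precisely the paper's own example $a=1+3\be_1+4\be_2-5\be_3$, $b=2+\be_1+\be_2+\be_3$, where $G(a)=0$, $G(b)=1$, the eigenvalues are $0,0,-2,-2$, and yet $\mathrm{rank}(F(a,b))=3$. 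In that situation rank $3$ means the geometric multiplicity of the eigenvalue $0$ is $1$ while its algebraic multiplicity is $2$, which cannot be deduced from the eigenvalue list; you must exhibit the nontrivial Jordan block (e.g.\ show $\ker F\subsetneq\ker F^2$, using that $L(a)-a_0E_4=L(Cim(a))$ is a nonzero nilpotent when $G(a)=0$) or compute the kernel of the $4\times 4$ matrix directly. The same issue, in milder form, affects the rank-$2$ claim when $G(a)=G(b)=0$: there all four eigenvalues vanish and the rank must come from the nilpotent structure, which you do flag but do not carry out. (One further nitpick: as stated the rank-$2$ claim fails for $a=b\in\br$, where $F=0$; the intended hypothesis is evidently $a,b\notin\br$ as in Lemma~\ref{sginv}.)
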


\vspace{2mm}

\begin{exam}
	Let $a=2+4\be_1+5\be_2,\,b=2+3\be_1+6\be_2+2\be_3$. Then $G(a)=G(b)=41$, $$F(a,b)=\left(\begin{array}{cccc}
	0 &1  &-1  &2  \\
	1 &0   &2  &-11  \\
	-1 & -2  &0   &7  \\
	-2 &  -11 &7   &0
	\end{array}\right),$$  and $\mathrm{rank}(F(a,b))=2$, the solutions of $\det(\lambda E_4-F(a,b))=0$ are $$\lambda_1=\lambda_2=0,\,\,\lambda_3=2\sqrt{41},\,\,\lambda_4=-2\sqrt{41}.$$
\end{exam}	

\begin{exam}
	Let $a=1+3\be_1+4\be_2-5\be_3,\,b=2+\be_1+\be_2+\be_3$. Then $G(a)=0,\,G(b)=1$, $$F(a,b)=\left(\begin{array}{cccc}
	-1 &2  &3  &6  \\
	2 &-1   &-4  &-5  \\
	3 & 4  &-1   &4  \\
	-6 &  -5 &4   &-1
	\end{array}\right),$$  and $\mathrm{rank}(F(a,b))=3$, the solutions of $\det(\lambda E_4-F(a,b))=0$ are $$\lambda_1=\lambda_2=0,\,\,\lambda_3=\lambda_4=-2.$$
\end{exam}

\begin{lem}\label{sginv}
Let $a,b\in C\ell_2-\br$, $F=L(a)-R(b)$. If $a_0=b_0,\,G(a)=G(b)$, the Moore-Penrose inverse of $F$ is  $$F^+=\frac{L(a')-R(b')}{2(|Cim(a)|^2+|Cim(b)|^2)}.$$
\end{lem}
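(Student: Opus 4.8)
The plan is to verify directly that the candidate matrix $F^{+}=\dfrac{L(a')-R(b')}{2(|Cim(a)|^{2}+|Cim(b)|^{2})}$ satisfies the four Moore–Penrose equations $FF^{+}F=F$, $F^{+}FF^{+}=F^{+}$, $(FF^{+})^{T}=FF^{+}$, $(F^{+}F)^{T}=F^{+}F$. Set $S=|Cim(a)|^{2}+|Cim(b)|^{2}$, which is nonzero since $a,b\notin\br$, and note that by Proposition \ref{pro2.3}(2),(3) we have $L(a')=L(a)^{T}$ and $R(b')=R(b)^{T}$, so $2S\,F^{+}=L(a)^{T}-R(b)^{T}=(L(a)-R(b))^{T}=F^{T}$. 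Thus the claim reduces to showing $F^{+}=\frac{1}{2S}F^{T}$ is the Moore–Penrose inverse of $F$; the symmetry equations $(FF^{T})^{T}=FF^{T}$ and $(F^{T}F)^{T}=F^{T}F$ are then automatic, and only the two product identities $FF^{T}F=2S\,F$ and $F^{T}FF^{T}=2S\,F^{T}$ remain. Since $F^{T}FF^{T}=2S\,F^{T}$ follows from $FF^{T}F=2S\,F$ by transposing, everything comes down to the single matrix identity
\begin{equation}\label{singlekey}
FF^{T}F=2\big(|Cim(a)|^{2}+|Cim(b)|^{2}\big)F.
\end{equation}

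The key computational step is to expand $FF^{T}F$ using $F=L(a)-R(b)$ and the algebraic rules of Proposition \ref{pro2.3}, especially $L(a)R(b)=R(b)L(a)$, $L(ab)=L(a)L(b)$, $R(ab)=R(b)R(a)$, together with $L(a)^{T}=L(a')$, $R(b)^{T}=R(b')$. Writing $FF^{T}=L(a)L(a')-L(a)R(b')-R(b)L(a')+R(b)R(b')=L(aa')-R(b')L(a)-L(a')R(b)+R(b'b)$ and using $aa'=a_{0}^{2}-a_{1}^{2}-a_{2}^{2}+a_{3}^{2}+2a_{0}(a_{1}\be_{1}+a_{2}\be_{2})+\cdots$ — more usefully, I would first record the scalar identities $aa'+\overline{a'}\,\overline{a}$-type relations and the facts $aa'=2a_{0}a-H_{a}\,'$... rather than chase this by brute force, the cleaner route is to observe that under the ring isomorphism $L$ restricted to $\bc$-commuting pieces one can reduce to computing in $C\ell_2$ itself. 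Concretely, since $(L(a)-R(b))\overrightarrow{x}=\overrightarrow{0}\iff ax=xb$, and $L(a)^{T}-R(b)^{T}$ corresponds to the map $x\mapsto a'x-xb'$, the identity \eqref{singlekey} is equivalent to the Clifford-algebra identity: for all $x\in C\ell_2$,
\begin{equation}\label{cliffkey}
a\big(a'(ax-xb)-(ax-xb)b'\big)b - \text{(the two cross terms)} = 2S\,(ax-xb),
\end{equation}
which after using $Cre(a'a)=a_{0}^{2}+a_{1}^{2}+a_{2}^{2}+a_{3}^{2}$ type simplifications and $aa'=2a_{0}Cre - H$ collapses because $a^{2}-2a_{0}a+H_{a}=0$ (the Cayley–Hamilton relation visible in \eqref{eq4}) and the analogous relation for $b$ with $G$ in place of... ; under the hypothesis $a_{0}=b_{0}$ and $G(a)=G(b)$ the relevant quadratic relations for $L(a)$ and $R(b)$ coincide, forcing the quartic $FF^{T}F-2SF$ to vanish.

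I would organize the actual proof as follows: (i) reduce to \eqref{singlekey} via the transpose/symmetry observations above; (ii) expand $FF^{T}F$ and collect it as a polynomial in $L(a),R(b)$ with scalar coefficients, using $R(b)L(a)=L(a)R(b)$; (iii) substitute the Cayley–Hamilton-type relations $L(a)^{2}=2a_{0}L(a)-H_{a}E_{4}$ and $R(b)^{2}=2b_{0}R(b)-H_{b}E_{4}$ (each proved exactly as in Proposition \ref{pro5.1}) to lower all degrees, so that $FF^{T}F$ becomes a linear combination of $E_{4}$, $L(a)$, $R(b)$, $L(a)R(b)$; (iv) plug in $b_{0}=a_{0}$ and $G(b)=G(a)$ — equivalently $H_{a}-a_{0}^{2}=H_{b}-b_{0}^{2}$ after noting $G(a)=|Cim(a)|^{2}-2a_{3}^{2}$... actually $G(a)=a_{1}^{2}+a_{2}^{2}-a_{3}^{2}$ and $H_a = a_0^2 - a_1^2 - a_2^2 + a_3^2$, so $a_0^2 - H_a = a_1^2+a_2^2-a_3^2 = G(a)$; hence $a_{0}=b_{0}$ and $G(a)=G(b)$ give $H_{a}=H_{b}$ — and check that the coefficients of $E_{4}$ and of $L(a)R(b)$ vanish while the coefficient of $L(a)-R(b)$ equals $2S$, recalling $S=|Cim(a)|^{2}+|Cim(b)|^{2}$ and $|Cim(a)|^{2}=a_{0}^{2}-H_{a}+2a_{3}^{2}$... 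I will need the precise identity $2S = (a_0^2 - H_a) + (b_0^2 - H_b) + 2a_3^2 + 2b_3^2$, which under the hypotheses simplifies, and verifying it matches the surviving coefficient is exactly the bookkeeping to be done. The main obstacle is step (iv): keeping track of the scalar coefficients and confirming that the hypothesis $a_{0}=b_{0},\ G(a)=G(b)$ is precisely what is needed to kill the unwanted terms; I expect one genuinely needs both hypotheses (rank $F=2$ from Proposition \ref{proped}) and not merely one of them, so the computation must be done carefully rather than quoted. Once \eqref{singlekey} is established, conclude by the uniqueness of the Moore–Penrose inverse of a real matrix that $F^{+}=\frac{1}{2S}F^{T}=\frac{L(a')-R(b')}{2(|Cim(a)|^{2}+|Cim(b)|^{2})}$, as claimed. $\BBox$
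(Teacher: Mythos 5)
Your opening reduction is correct and is actually a cleaner framing than the paper's: since $L(a')=L(a)^{T}$ and $R(b')=R(b)^{T}$ (Proposition \ref{pro2.3}), the candidate inverse is $\frac{1}{2S}F^{T}$ with $S=|Cim(a)|^{2}+|Cim(b)|^{2}>0$, the two symmetry conditions hold automatically, the identity $F^{T}FF^{T}=2S\,F^{T}$ is the transpose of $FF^{T}F=2S\,F$, and uniqueness of the Moore--Penrose inverse closes the argument. The paper does not make this observation: it writes $F$ and $V=L(a')-R(b')$ out as explicit $4\times4$ matrices and verifies all four Penrose conditions for the pair $(F,V)$ by direct multiplication.

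The gap is that the single identity everything now rests on, $FF^{T}F=2S\,F$, is never actually established, and the mechanism you propose for establishing it is unsound. In step (iii) you claim that substituting $L(a)^{2}=2a_{0}L(a)-H_{a}E_{4}$ and $R(b)^{2}=2b_{0}R(b)-H_{b}E_{4}$ collapses $FF^{T}F$ into the span of $E_{4}$, $L(a)$, $R(b)$, $L(a)R(b)$. But expanding with $F^{T}=L(a')-R(b')$ produces factors such as $L(aa'a)$, $L(a^{2})R(b')$, $L(a')R(b^{2})$ and $L(aa'+a'a)R(b)$, and $a'$ does \emph{not} lie in the subalgebra $\mathrm{span}\{1,a\}$ generated by $a$ unless $a_{3}=0$ or $a_{1}=a_{2}=0$; hence the terms $L(a')$, $R(b')$, $L(a)R(b')$, $L(a')R(b)$ survive your reduction and are outside your proposed spanning set. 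Likewise $aa'+a'a=2|a|^{2}+4a_{0}(a_{1}\be_{1}+a_{2}\be_{2})$ is not a scalar, so $L(aa'+a'a)R(b)$ contributes yet another independent term. The ``bookkeeping'' you defer is therefore not a four-coefficient check but a substantially larger cancellation, and the intermediate identities floated in your second paragraph (the relation claimed for $aa'$, and the displayed Clifford-algebra identity with unspecified ``cross terms'') are garbled or wrong, so they cannot carry it. Some explicit computation --- either the paper's direct $4\times4$ multiplications verifying $FVF=2SF$, $VFV=2SV$, $(FV)^{T}=FV$, $(VF)^{T}=VF$, or an honest expansion in the algebra tracking all of the extra terms above --- is still required, and your proposal as written does not supply it.
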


\begin{proof}Let $a=a_0+a_1\be_1+a_2\be_2+a_3\be_3,\, b=a_0+b_1\be_1+b_2\be_2+b_3\be_3$ and $G(a)=G(b)$.
We have $$|Cim(a)|^2=a_1^2+a_2^2+a_3^2,\,\, |Cim(b)|^2=b_1^2+b_2^2+b_3^2.$$
 Then$$F=\left(\begin{array}{cccc}
0 &a_1-b_1  &a_2-b_2  &b_3-a_3  \\
a_1-b_1  &0 &a_3+b_3  &-a_2-b_2  \\
a_2-b_2  &-a_3-b_3&0  &a_1+b_1 \\
a_3-b_3  &-a_2-b_2 &a_1+b_1 &0
\end{array}\right).$$
Let
$$V=L(a')-R(b')=\left(\begin{array}{cccc}
0 &a_1-b_1  &a_2-b_2  &a_3-b_3  \\
a_1-b_1  &0&-a_3-b_3  &-a_2-b_2  \\
a_2-b_2  &a_3+b_3&0  &a_1+b_1 \\
b_3-a_3  &-a_2-b_2&a_1+b_1&0
\end{array}\right).$$
By direct calculation, we have $$FVF=2(|Cim(a)|^2+|Cim(b)|^2)F,\,\,VFV=2(|Cim(a)|^2+|Cim(b)|^2)V,\,\,(VF)^T=VF,\,\,(FV)^T=FV.$$
By the definition of Moore-Penrose inverse, we have $$F^+=\frac{L(a')-R(b')}{2(|Cim(a)|^2+|Cim(b)|^2)}.$$
\end{proof}

\begin{thm}\label{thmk0}
Let $a,b\in C\ell_2-\br$ and $a_0=b_0,\,G(a)=G(b)$.  Then the general solution of
linear equation $ax=xb$ is
\begin{equation}\label{thmk0e}x=y-\frac{a'ay-a'yb-ayb'+ybb'}{2(|Cim(a)|^2+|Cim(b)|^2)},\,\forall y\in C\ell_2.\end{equation}
\end{thm}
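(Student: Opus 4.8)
The plan is to translate the equation $ax=xb$ into the matrix equation $F\overrightarrow{x}=\overrightarrow{0}$ where $F=L(a)-R(b)$, exactly as set up in the paragraph preceding Proposition~\ref{pro5.1}, and then apply Lemma~\ref{general} with the zero right-hand side. Since $Ax=0$ is always solvable (with $b=0$ the consistency condition $AA^+b=b$ holds trivially), the general solution from Lemma~\ref{general} is $\overrightarrow{x}=(E_4-F^+F)\overrightarrow{y}$ for arbitrary $\overrightarrow{y}\in\br^4$, i.e.\ $x=y-F^+F\,y$ after returning to $C\ell_2$ form via the ring homomorphism $L$ and its companion $R$.

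The key input is Lemma~\ref{sginv}, which under the hypotheses $a,b\in C\ell_2-\br$, $a_0=b_0$, $G(a)=G(b)$ gives the explicit Moore-Penrose inverse
$$F^+=\frac{L(a')-R(b')}{2(|Cim(a)|^2+|Cim(b)|^2)}.$$
So the only real work is to compute $F^+F$ and recognize it as an operator coming from multiplication in $C\ell_2$. First I would write $F^+F=\frac{1}{2(|Cim(a)|^2+|Cim(b)|^2)}(L(a')-R(b'))(L(a)-R(b))$ and expand the product using Proposition~\ref{pro2.3}(4), which tells us $L(a')L(a)=L(a'a)$, $R(b)R(b')=R(b'b)$ (note $R$ reverses order), and $L(u)R(v)=R(v)L(u)$ always commute. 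The cross terms $L(a')R(b)$ and $R(b')L(a)$ are the operators $x\mapsto a'xb$ and $x\mapsto axb'$ respectively. Hence $F^+F$ applied to $y$ returns $\frac{1}{2(|Cim(a)|^2+|Cim(b)|^2)}(a'ay - a'yb - ayb' + ybb')$, and substituting into $x=y-F^+F\,y$ yields precisely \eqref{thmk0e}.

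The main obstacle is bookkeeping rather than conceptual: one must be careful that $R(b')R(b)=R(bb')$ — not $R(b'b)$ — because $R$ is an anti-homomorphism in the ring sense (Proposition~\ref{pro2.3}(3)--(4)), and likewise that the term $R(b')L(a)$ corresponds to $x\mapsto axb'$ with the $b'$ on the \emph{right}. Getting the four terms $a'ay$, $-a'yb$, $-ayb'$, $ybb'$ in the correct positions is the crux. I would also remark that the consistency condition in Lemma~\ref{general} is automatic here since the right-hand side is $\overrightarrow{0}$, so no solvability hypothesis beyond $a_0=b_0$, $G(a)=G(b)$ (needed only so that Lemma~\ref{sginv} applies and $F^+$ has the stated form) is required; this is why the theorem asserts a general solution unconditionally. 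Finally, one observes $2(|Cim(a)|^2+|Cim(b)|^2)\neq 0$ because $a,b\notin\br$, so the denominator in \eqref{thmk0e} is legitimate.
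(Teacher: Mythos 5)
Your proposal is correct and follows essentially the same route as the paper: reduce $ax=xb$ to $(L(a)-R(b))\overrightarrow{x}=\overrightarrow{0}$, apply Lemma~\ref{general} with zero right-hand side together with the explicit $F^+$ from Lemma~\ref{sginv}, and translate $(E_4-F^+F)\overrightarrow{y}$ back via Proposition~\ref{pro2.3}(4). Your bookkeeping of the four terms (in particular $R(b')R(b)=R(bb')$) is accurate, and the remark that the denominator is nonzero because $a,b\notin\br$ is a detail the paper leaves implicit.
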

\begin{proof} Since $ax=xb$ is equivalent to $(L(a)-R(b))\overrightarrow{x}=\overrightarrow{0}$ , By Lemma\ref{general} and Lemma\ref{thmk0}, we have  $$\overrightarrow{x}=(E_4-F^+F)\overrightarrow{y}.$$ By Proposition\ref{pro2.3},  the general solution of equation is(\ref{thmk0e}).
\end{proof}

\begin{pro}\label{pro5.3}
 Let $a=a_0+a_1\be_1+a_2\be_2+a_3\be_3\in C\ell_2-\br$ and $G(a)<0$. Then there exists a $u \in C\ell_2-Z(C\ell_2)$ such that $$u^{-1}au=a_0+\sqrt{-G(a)}\be_3.$$
\end{pro}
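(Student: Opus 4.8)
The plan is to produce the similarity transformation explicitly by exhibiting a candidate $u$ and verifying the identity $au = u\,b_*$, where $b_* = a_0 + \sqrt{-G(a)}\,\be_3$, and then checking that the candidate $u$ is not a zero divisor. Since $G(a) = a_1^2 + a_2^2 - a_3^2 < 0$, the quantity $\sqrt{-G(a)}$ is a positive real number, so $b_*$ is a genuine element of $C\ell_2$ with $Cim(b_*) = \sqrt{-G(a)}\,\be_3$, and one checks $G(b_*) = -(-G(a)) = G(a)$ and $Cre(b_*) = a_0 = Cre(a)$. So the two numbers share real part and share the invariant $G$; by Proposition~\ref{proped} the matrix $F(a,b_*) = L(a) - R(b_*)$ has rank $2$, hence a two-dimensional kernel, which is where the solutions $u$ of $au = ub_*$ live. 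The task is to pick a vector in that kernel that avoids $Z(C\ell_2)$.

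First I would reduce to the imaginary parts. Write $p = Cim(a) = a_1\be_1 + a_2\be_2 + a_3\be_3$ and $q = Cim(b_*) = \sqrt{-G(a)}\,\be_3$; since the real parts agree, $au = ub_*$ is equivalent to $pu = uq$. Note $p^2 = G(a) = q^2$ is a negative real scalar, call it $-s^2$ with $s = \sqrt{-G(a)} > 0$; thus both $p$ and $q$ are square roots of the same negative scalar. The natural candidate is $u = s + $ (something), or more symmetrically $u = p + q$ or $u = q + p$ — but one must be careful about the side. A clean choice: try $u = s - q\,p\,/\,s$-type expressions, or simplest, $u = p - q$ followed by checking, or $u = s\be_0 + \ldots$. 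Concretely I would try $u = q + p$ reversed as $u = p\,(\,\text{?}\,)$; the cleanest device is $u$ such that $pu = uq$ with $u = \alpha + \beta p$ won't generally work since $p$ commutes with itself. Instead use $u = 1 + p q^{-1}$? Since $q^{-1} = -q/s^2 = \bar q / s^2$ scaled. Let me instead set $u = s + $ the ``rotor'' connecting $q$ to $p$: because $p$ and $q$ are both square roots of $-s^2$, there is $u$ with $u^{-1} p u = q$; explicitly one can take $u = p + s\cdot(\text{unit})$ suitably, but the honest approach is to solve the linear system. So the concrete plan: compute the kernel of $F(a,b_*)$ by Gaussian elimination (using $a_1^2+a_2^2 = a_3^2 - s^2$... wait, $a_1^2+a_2^2 = a_3^2 + G(a) = a_3^2 - s^2$, hmm that can be negative — but $G(a)<0$ means $a_3^2 > a_1^2+a_2^2$, consistent), produce an explicit basis of two kernel vectors, and then show that some real combination of them has $H_u \neq 0$.

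The main obstacle I expect is exhibiting a kernel element that is provably outside $Z(C\ell_2)$, i.e. with $H_u \neq 0$, uniformly over all $a$ with $G(a) < 0$; the kernel is two-dimensional so there is freedom, but one must rule out the degenerate possibility that the whole $2$-plane of solutions lies inside the quadric $\{H_u = 0\}$. To handle this cleanly I would argue as follows: if $u$ solves $pu = uq$ and $u$ is invertible then $u^{-1}pu = q$, so conjugation by any invertible solution does the job; and the set of invertible solutions is nonempty because the solution plane cannot be contained in $Z(C\ell_2)$ — indeed, restricted to a $2$-plane, $H_u$ is a quadratic form, and if it vanished identically the plane would be totally isotropic for the split form $H$ of signature $(2,2)$, which is possible in principle, so this needs a genuine check. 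I would therefore instead directly produce one invertible solution: take $u = s\be_0 + p\,\be_3^{-1}$-style, or most reliably, $u := s + \be_3 p$ when $a_3 \ge 0$ (and a sign-adjusted variant when $a_3 < 0$), compute $H_u$ and verify it is positive using $a_3^2 > a_1^2 + a_2^2$. Once a specific invertible $u$ with $pu = uq$ is in hand, $u^{-1}au = u^{-1}(a_0 + p)u = a_0 + q = a_0 + \sqrt{-G(a)}\,\be_3$, which is the claim. The write-up would present the chosen $u$, the one-line verification of $pu = uq$ via Table~\ref{tab1}, and the computation $H_u = a_3^2 - a_1^2 - a_2^2 + (\text{nonnegative}) > 0$, concluding $u \in C\ell_2 - Z(C\ell_2)$.
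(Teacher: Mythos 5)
Your overall strategy is the same as the paper's: reduce $au=ub_*$ to $pu=uq$ with $p=Cim(a)$, $q=s\be_3$, $s=\sqrt{-G(a)}$, exhibit an explicit solution with a case split on the sign of $a_3$, and check $H_u\neq 0$ (this last check is a point the paper's own proof leaves implicit, and you are right that it is genuinely needed, since the two-dimensional solution space could a priori meet $Z(C\ell_2)$ badly). However, the proposal has a concrete gap: it never carries out the verification, and the one candidate it commits to, $u=s+\be_3 p$, does not satisfy $pu=uq$. Indeed $\be_3 p=-a_3+a_2\be_1-a_1\be_2$, so $u=(s-a_3)+a_2\be_1-a_1\be_2$; comparing the $\be_1$-components of $pu$ and of $uq$ gives $a_1(s-a_3)-a_3a_1=a_1s-2a_1a_3$ versus $a_1s$, forcing $a_1a_3=0$, and the $\be_2$-component likewise forces $a_2a_3=0$. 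Since $G(a)<0$ implies $a_3^2>a_1^2+a_2^2\ge 0$, hence $a_3\neq0$, your $u$ is a solution only when $a_1=a_2=0$, and then (for $a_3>0$) it degenerates to $u=s-a_3=0$. So as stated the construction fails for essentially every $a$.

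The repair is a sign/order change. The choice $u=s-p\be_3=(s+a_3)+a_2\be_1-a_1\be_2$ does satisfy $pu=uq$, and for $a_3>0$ one has $H_u=(s+a_3)^2-a_1^2-a_2^2>a_3^2-a_1^2-a_2^2=s^2>0$; this is exactly the paper's candidate in that case (for $a_3\le 0$ the paper uses $x=a_2+(a_3-s)\be_1+a_1\be_3$, for which $H_x=2G(a)+2a_3s<0$). Alternatively, the ``rotor'' $u=p+q$ that you mention and then abandon works at once: since $p^2=q^2=G(a)\in\br$ is central, $p(p+q)=p^2+pq=(p+q)q$, and $H_{p+q}=(a_3+s)^2-a_1^2-a_2^2>0$ when $a_3>0$, with a companion choice handling $a_3<0$. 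Until one such explicit, verified $u$ replaces $s+\be_3p$, the proposal does not prove the proposition.
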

\begin{proof}Consider the equation of Clifford algebra $C\ell_2$
\begin{equation}\label{eq5}ax=x(a_0+\sqrt{-G(a)}\be_3).\end{equation}
It easy to verify that $$x=a_2+(a_3-\sqrt{-G(a)}\be_1+a_1\be_3)$$ is a solution to equation(\ref{eq5}), if $a_3\leq0$. If $a_3>0$, $$x=a_3+\sqrt{-G(a)}+a_2\be_1-a_1\be_2$$ is a solution to equation(\ref{eq5}).
\end{proof}

\begin{exam}
Let $a=1-\be_3 \in C\ell_2$. Then$G(a)=-1$, the solution of equation$ax=x(1+\be_3)$ is $$x=-2\be_1,$$ where $H_x=-4.$
\end{exam}

\begin{exam}
Let $a=1+2\be_1+\be_2+3\be_3 \in C\ell_2$. Then $G(a)=-4$, the solution of equation $ax=x(1+2\be_3)$ is $$x=5+\be_1-2\be_2,$$ where $H_x=20.$
\end{exam}

\begin{pro}\label{pro5.4}
Let $a=a_0+a_1\be_1+a_2\be_2+a_3\be_3\in C\ell_2-\br$ and $G(a)>0$. Then there exists a $u \in C\ell_2-Z(C\ell_2)$ such that $$u^{-1}au=a_0+\sqrt{G(a)}\be_2.$$
\end{pro}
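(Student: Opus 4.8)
The plan is to mirror the proof of Proposition~\ref{pro5.3}: produce an explicit $u\in C\ell_2-Z(C\ell_2)$ solving the linear equation $au=ub$ with $b=a_0+\sqrt{G(a)}\,\be_2$, since then $u^{-1}au=b$. Put $s=\sqrt{G(a)}$; because $G(a)>0$ we have $s>0$. Write $\tilde a=Cim(a)=a_1\be_1+a_2\be_2+a_3\be_3$ and recall from Definition~\ref{defi1.2} that $\tilde a^{\,2}=Cim(a)^2=G(a)=s^2$; note also $(s\be_2)^2=s^2\be_2^2=s^2$. Since $a$ and $b$ have the same scalar part, $au=ub$ is equivalent to $\tilde a\,u=u(s\be_2)=s\,u\be_2$, so it suffices to exhibit $u$ with $H_u\ne0$ solving $\tilde a u=s\,u\be_2$; then $u\notin Z(C\ell_2)$ is invertible and $u^{-1}au=b$.

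\emph{Case $a_2+s\ne0$.} Take $u=\tilde a+s\be_2=a_1\be_1+(a_2+s)\be_2+a_3\be_3$. Using $\tilde a^2=s^2$ and $\be_2^2=1$ one gets $\tilde a u=\tilde a^2+s\,\tilde a\be_2=s^2+s\,\tilde a\be_2=s(s+\tilde a\be_2)=s(\tilde a+s\be_2)\be_2=s\,u\be_2$, so $au=ub$. From $a_1^2-a_3^2=G(a)-a_2^2=s^2-a_2^2$ a one-line computation gives $H_u=-a_1^2-(a_2+s)^2+a_3^2=-2s(s+a_2)$, which is nonzero by the case hypothesis. Hence $u$ is invertible and $u^{-1}au=a_0+\sqrt{G(a)}\,\be_2$.

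\emph{Case $a_2+s=0$, i.e. $a_2=-s$.} Now $G(a)=a_1^2+a_2^2-a_3^2=s^2=a_2^2$ forces $a_1^2=a_3^2$, and the choice $\tilde a+s\be_2$ above becomes a zero divisor, so a different conjugator is needed. I would take $u=-a_3+2s\be_1+a_1\be_2$. Expanding $\tilde a u$ and $s\,u\be_2$ with the multiplication table and using $a_2=-s$ together with $a_1^2=a_3^2$ shows $\tilde a u=s\,u\be_2$, while $H_u=a_3^2-4s^2-a_1^2=-4s^2\ne0$; so again $u$ is invertible with $u^{-1}au=a_0+\sqrt{G(a)}\,\be_2$. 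Since the two cases are exhaustive, this finishes the argument.

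The genuine obstacle is this last case. In Proposition~\ref{pro5.3} the two natural solution formulas degenerate on disjoint loci and hence jointly handle every input; here, by contrast, the two obvious candidates $\tilde a+s\be_2$ and its companion $s+\tilde a\be_2=(\tilde a+s\be_2)\be_2$ both fail exactly when $a_2+\sqrt{G(a)}=0$, so one is forced to locate a genuinely different invertible solution of $\tilde a u=s\,u\be_2$ on that locus. I found $u=-a_3+2s\be_1+a_1\be_2$ by solving the homogeneous system $(L(\tilde a)-R(s\be_2))\overrightarrow{u}=\overrightarrow{0}$ directly (its solution space is two-dimensional by Proposition~\ref{proped}) and then checking $H_u\ne0$. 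Conceptually this must work: $\tilde a/s$ and $\be_2$ are trace-zero involutions in $C\ell_2\cong\br^{2\times2}$, hence conjugate, so an intertwiner exists; but making it explicit and invertible uniformly in $a$ still comes down to the elementary verification above.
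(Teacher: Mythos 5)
Your proposal is correct and follows essentially the same route as the paper: both proofs exhibit an explicit invertible solution $u$ of the intertwining equation $au=u\big(a_0+\sqrt{G(a)}\,\be_2\big)$ and verify $H_u\neq 0$, splitting into two cases according to a condition on $a_2$. The only differences are cosmetic — you split on $a_2+\sqrt{G(a)}=0$ versus $\neq 0$ and organize the verification around the identity $Cim(a)^2=G(a)$, while the paper splits on the sign of $a_2$ and simply writes down coordinate solutions (whose $H$-values are $2\sqrt{G(a)}\,(a_2-\sqrt{G(a)})$ and $2\sqrt{G(a)}\,(a_2+\sqrt{G(a)})$, degenerating on disjoint loci) — and your computations, including the extra conjugator $-a_3+2\sqrt{G(a)}\,\be_1+a_1\be_2$ on the degenerate locus, all check out.
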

\begin{proof}Consider the equation of Clifford algebra $C\ell_2$
\begin{equation}\label{eq6}ax=x(a_0+\sqrt{G(a)}\be_2).\end{equation}
It easy to verify that $$x=a_3+(a_2-\sqrt{G(a)})\be_1-a_1\be_2$$ is a solution to equation(\ref{eq6}), if $a_2\leq0$. If $a_2>0$, $$x=a_2+\sqrt{G(a)}+a_3\be_1+a_1\be_3$$ is a solution to equation(\ref{eq6}).
\end{proof}

\begin{exam}
Let $a=1+5\be_1+3\be_3 \in C\ell_2$. Then $G(a)=16$, the solution of equation $ax=x(1+4\be_2)$ is $$x=3-4\be_1-5\be_2,$$ where $H_x=-32.$
\end{exam}

\begin{exam}
Let $a=1+2\be_1+\be_2-\be_3 \in C\ell_2$. Then $G(a)=4$, the solution of equation $ax=x(1+2\be_2)$ is $$x=3-\be_1+2\be_3,$$ where $H_x=12.$
\end{exam}

\begin{pro}\label{pro5.5}
Let $a=a_0+a_1\be_1+a_2\be_2+a_3\be_3\in C\ell_2-\br$ and $G(a)=0$. Then there exists a $u \in C\ell_2-Z(C\ell_2)$ such that $$u^{-1}au=a_0+\be_2+\be_3.$$
\end{pro}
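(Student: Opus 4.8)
The plan is to reduce the claim to an easy fact about square--zero elements. For an invertible $u$, the identity $u^{-1}au=a_0+\be_2+\be_3$ is equivalent to the linear equation $au=u(a_0+\be_2+\be_3)$, i.e. $(a-a_0)u=u(\be_2+\be_3)$, so it suffices to produce $u\in C\ell_2-Z(C\ell_2)$ solving it. The key observation is that, by Definition \ref{defi1.2}, $G(a)=0$ says exactly $(a-a_0)^2=Cim(a)^2=0$; similarly $(\be_2+\be_3)^2=\be_2^2+\be_2\be_3+\be_3\be_2+\be_3^2=1-\be_1+\be_1-1=0$; and since $a\notin\br$, both $a-a_0$ and $\be_2+\be_3$ are \emph{nonzero} square--zero elements of $C\ell_2$. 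So Proposition \ref{pro5.5} amounts to: any two nonzero square--zero elements of $C\ell_2$ are similar.

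First I would pass to the matrix model $\varphi:C\ell_2\to\br^{2\times2}$ of (\ref{isomorphm2}). Because $\varphi$ is an algebra isomorphism with $\det\varphi(u)=H_u$, conjugation by a non--zero--divisor of $C\ell_2$ corresponds to conjugation by an invertible real $2\times2$ matrix, and every such matrix is $\varphi(u)$ for some $u$. Now $\varphi(a-a_0)$ has vanishing trace (its real part is $0$) and determinant $H_{a-a_0}=-G(a)=0$, so it is a $2\times2$ nilpotent matrix; being nonzero it is similar to $\bigl(\begin{smallmatrix}0&1\\0&0\end{smallmatrix}\bigr)$, as is $\varphi(\be_2+\be_3)=\bigl(\begin{smallmatrix}0&2\\0&0\end{smallmatrix}\bigr)$. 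Hence $\varphi(a-a_0)$ and $\varphi(\be_2+\be_3)$ are conjugate in $\br^{2\times2}$, and pulling the conjugating matrix back through $\varphi$ gives the required $u$.

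In the style of Propositions \ref{pro5.3} and \ref{pro5.4} one can instead exhibit $u$ explicitly and verify $au=ub$ and $H_u\neq0$ by direct computation. With $a_1^2+a_2^2-a_3^2=0$: if $a_2+a_3\neq0$, then
\[
u=(a_2+a_3+2)+(a_2+a_3-2)\be_1-a_1\be_2+a_1\be_3
\]
works, with $H_u=8(a_2+a_3)\neq0$; and if $a_2+a_3=0$, then $G(a)=0$ forces $a_1=0$ and, since $a\notin\br$, $a_3\neq0$, so one may take $u=(1-a_3)\be_2+(1+a_3)\be_3$, with $H_u=4a_3\neq0$.

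The hard part is not producing a solution of $au=ub$ — that is trivial, and Theorem \ref{thmk0} even describes all of them (it applies since $b=a_0+\be_2+\be_3\notin\br$, $b_0=a_0$ and $G(b)=0=G(a)$), the solution space being $2$--dimensional by Proposition \ref{proped}. The point is to find inside this space an element that is \emph{not} a zero divisor. In the matrix--model argument this is automatic from the Jordan form of a nilpotent $2\times2$ matrix; in the explicit argument it is precisely what forces the case split $a_2+a_3\neq0$ vs. $a_2+a_3=0$, the latter being exactly where the generic formula for $u$ degenerates to a zero divisor.
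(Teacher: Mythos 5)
Your proof is correct, and it takes a genuinely different route from the paper's. The paper works directly with the equation $ax=x(a_0+\be_2+\be_3)$ and simply exhibits ad hoc solutions according to a case split on $a_2$ versus $a_3$: for $a_2\neq a_3$ it takes $x=a_1\be_1+(1+a_2)\be_2+(1+a_3)\be_3$ (which has $H_x=2(a_3-a_2)\neq 0$ precisely because $G(a)=0$), and for $a_2=a_3=-1$ (forcing $a_1=0$) it takes $x=\be_1$. You instead observe that $G(a)=0$ means $Cim(a)^2=0$, transport the problem through $\varphi$ to $\br^{2\times 2}$, and invoke the Jordan form of nonzero nilpotent $2\times2$ matrices; this is cleaner, explains \emph{why} the normal form is $a_0+\be_2+\be_3$ rather than merely verifying it, and makes the existence of a non-zero-divisor conjugator automatic instead of something to be checked by hand. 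Your explicit formulas (case split on $a_2+a_3$) also check out: with $s=a_2+a_3\neq0$ one computes $Cim(a)\,u=u(\be_2+\be_3)=-2a_1+2a_1\be_1+2s\be_2+2s\be_3$ using $a_2^2-a_3^2=-a_1^2$, and $H_u=8s\neq0$; in the degenerate case $a_2+a_3=0$ your $u=(1-a_3)\be_2+(1+a_3)\be_3$ works with $H_u=4a_3\neq0$. A further point in your favour: the paper's case analysis actually omits the subcase $a_2=a_3\notin\{0,-1\}$ (there its generic formula degenerates to $x=(1+a_2)(\be_2+\be_3)$, a zero divisor, and only $a_2=a_3=-1$ is treated separately), whereas both of your arguments cover all of $C\ell_2-\br$ with $G(a)=0$.
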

\begin{proof} Consider the equation of Clifford algebra $C\ell_2$
\begin{equation}\label{eq7}ax=x(a_0+\be_2+\be_3).\end{equation}
Let $a_2=a_3=-1$, then $a_1=0$. $$x=\be_1$$ is a solution to equation(\ref{eq7}). If $a_2\neq a_3$ , $$x=a_1\be_1+(1+a_2)\be_2+(1+a_3)\be_3$$ is a solution to equation(\ref{eq7}).
\end{proof}

\begin{exam}
Let $a=1+3\be_1+4\be_2+5\be_3 \in C\ell_2$. Then $G(a)=0$, the solution of equation $ax=x(1+\be_2+\be_3)$ is $$x=3\be_1+5\be_2+6\be_3,$$ where $H_x=2.$
\end{exam}

By Proposition\ref{pro5.3}, Proposition\ref{pro5.4} and Proposition\ref{pro5.5}, we have the following lemma.

\begin{lem}\label{similar}
Let $a=a_0+a_1\be_1+a_2\be_2+a_3\be_3\in C\ell_2-\br$. Then there exists an element $u\in C\ell_2-Z(C\ell_2)$ such that $$u^{-1}au=\left\{                                                                                                                             \begin{array}{lll}                                                                                                                                    a_0+\sqrt{G(a)}\ \be_2, & \hbox{  $G(a)>0$;} \\                                                                                                                         a_0+\sqrt{-G(a)}\ \be_3, & \hbox{ $G(a)<0$;}\\                                                                                                                                  a_0+\be_2+\be_3,& \hbox{  $G(a)=0$.}
\end{array}                                                                                                                             \right.
$$
\end{lem}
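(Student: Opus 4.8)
The plan is to derive Lemma \ref{similar} as an immediate consequence of the three preceding propositions, once the separate cases are organized by the sign of $G(a)$. The key observation is that Propositions \ref{pro5.3}, \ref{pro5.4} and \ref{pro5.5} already produce, in each of the three regimes $G(a)<0$, $G(a)>0$ and $G(a)=0$, an explicit solution $x$ of the corresponding equation $ax=x b_0'$, where $b_0'$ is the proposed canonical form. So the real content that remains is to check that the explicit $x$ exhibited in each proposition is in fact \emph{invertible}, i.e.\ that $x\in C\ell_2-Z(C\ell_2)$, equivalently $H_x\neq 0$; only then does the relation $ax=xb$ upgrade to the conjugacy $u^{-1}au=b$ with $u=x$.

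First I would treat the case $G(a)<0$. By Proposition \ref{pro5.3} we have two subcases according to the sign of $a_3$: if $a_3\le 0$, take $x=a_2+(a_3-\sqrt{-G(a)})\be_1+a_1\be_3$; if $a_3>0$, take $x=a_3+\sqrt{-G(a)}+a_2\be_1-a_1\be_2$. In each subcase I would compute $H_x$ using Proposition \ref{pro1.2}(3) (writing $x=z_1+z_2\be_2$) or directly from Definition \ref{defi1.2}, and check $H_x\neq 0$. For instance in the first subcase $H_x = a_2^2 + a_1^2 - (a_3-\sqrt{-G(a)})^2$; expanding $G(a)=a_1^2+a_2^2-a_3^2$ one finds this reduces to an expression that vanishes only in a degenerate situation which is exactly the case handled by the \emph{other} subcase, so that the two choices of $x$ together cover all $a\in C\ell_2-\br$ with $G(a)<0$ and always yield an invertible $u$. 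The case $G(a)>0$ is handled identically using Proposition \ref{pro5.4}, again splitting on the sign of $a_2$ and computing $H_x$ in each branch; and the case $G(a)=0$ uses Proposition \ref{pro5.5}, with the branch $a_2=a_3=-1$ (forcing $a_1=0$, giving $x=\be_1$, $H_x=-1\neq 0$) and the branch $a_2\neq a_3$ (giving $x=a_1\be_1+(1+a_2)\be_2+(1+a_3)\be_3$, whose $H_x$ I would verify is nonzero). Once invertibility of $u=x$ is established, multiply $ax=xb$ on the left by $x^{-1}=\overline{x}/H_x$ to get $u^{-1}au=b$, and assemble the three cases into the stated piecewise formula.

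The main obstacle I anticipate is the bookkeeping in the nonvanishing-of-$H_x$ verification: one must confirm that in each proposition the two subcases are genuinely exhaustive (every $a\in C\ell_2-\br$ in that $G$-regime falls into at least one subcase that delivers an invertible witness) and that no $a$ slips through with only singular witnesses available. This is a finite case analysis — for each of the (at most) two branches in each of the three propositions, expand $H_x$, substitute the definition of $G(a)$, and show the result is nonzero on the relevant branch — but it is the step where an error would actually break the lemma, so I would carry it out carefully rather than treating it as routine. A minor additional point is the hypothesis $a\in C\ell_2-\br$: this is exactly what guarantees $Cim(a)\neq 0$, which is needed so that the witnesses in Propositions \ref{pro5.3}--\ref{pro5.5} are not all simultaneously zero; I would note this explicitly. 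No deeper idea is required — the lemma is a packaging of results already proved.
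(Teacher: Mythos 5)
Your plan is the same as the paper's: the paper proves Lemma \ref{similar} simply by citing Propositions \ref{pro5.3}, \ref{pro5.4} and \ref{pro5.5}, with no further verification. The extra step you insist on --- checking that each exhibited solution of $ax=xb$ really has $H_x\neq 0$ and that the subcases are exhaustive --- is exactly the right instinct, and it is not mere bookkeeping: carried out, it exposes a genuine gap in the $G(a)=0$ case. The branches for $G(a)<0$ and $G(a)>0$ do close up: for instance, for $a_3\le 0$ one computes $H_x=2\sqrt{-G(a)}\,\bigl(a_3-\sqrt{-G(a)}\bigr)<0$, and the other three branches give $2\sqrt{-G(a)}\,\bigl(a_3+\sqrt{-G(a)}\bigr)>0$, $2\sqrt{G(a)}\,\bigl(a_2-\sqrt{G(a)}\bigr)<0$ and $2\sqrt{G(a)}\,\bigl(a_2+\sqrt{G(a)}\bigr)>0$ respectively; for $G(a)=0$ with $a_2\neq a_3$ one gets $H_x=2(a_3-a_2)\neq 0$. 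But the two subcases of Proposition \ref{pro5.5}, namely $a_2=a_3=-1$ and $a_2\neq a_3$, are not exhaustive: if $G(a)=0$ and $a_2=a_3=c$ with $c\neq -1$, then $a_1=0$ and $a=a_0+c(\be_2+\be_3)$ with $c\neq 0$ (since $a\notin\br$), and the generic witness $a_1\be_1+(1+a_2)\be_2+(1+a_3)\be_3=(1+c)(\be_2+\be_3)$ has $H_x=2(a_3-a_2)=0$, so it is not an admissible $u$. The lemma is nevertheless true there: writing $u=u_0+u_1\be_1$ one finds $(\be_2+\be_3)u=(u_0-u_1)(\be_2+\be_3)$ and $u(\be_2+\be_3)=(u_0+u_1)(\be_2+\be_3)$, so $u=(c+1)+(c-1)\be_1$ satisfies $au=u(a_0+\be_2+\be_3)$ with $H_u=4c\neq 0$. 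With this extra subcase added, your argument is complete --- and strictly more careful than the paper's, which omits all of these invertibility checks.
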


Obviously, we have the following proposition.
\begin{pro}\label{pro5.6}
Let $a\in Cent(C\ell_2)$. Then $a,b \in C\ell_2$ are similar if and only if $a=b$.
\end{pro}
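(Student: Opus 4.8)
The plan is to reduce everything to the fact, established in Proposition~\ref{subalgebra}(2), that $Cent(C\ell_2)=\br$. Thus an element $a\in Cent(C\ell_2)$ is a real scalar, so it commutes with every element of $C\ell_2$; in particular $au=ua$ for all $u\in C\ell_2$. This centrality is the only structural input needed beyond the basic arithmetic of $C\ell_2$.

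For the implication $a=b\Rightarrow$ similar, I would simply produce a witness. Take $u=\be_0=1$. Since $H_{\be_0}=1\neq0$, we have $\be_0\in C\ell_2-Z(C\ell_2)$, and the defining relation of similarity holds because $a\be_0=a=b=\be_0 b$. So $a$ and $b$ are similar.

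For the converse, suppose $a$ and $b$ are similar, so there is some $u\in C\ell_2-Z(C\ell_2)$ with $au=ub$. Because $a$ is central, $au=ua$, and hence $ua=ub$, i.e. $u(a-b)=0$. Since $u\notin Z(C\ell_2)$ we have $H_u\neq0$, so $u$ is invertible with $u^{-1}=\overline{u}/H_u$. Multiplying $u(a-b)=0$ on the left by $u^{-1}$ gives $a-b=0$, that is $a=b$.

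I do not anticipate any genuine obstacle: the statement is an immediate consequence of two facts already in the excerpt — that the center of $C\ell_2$ equals $\br$, and that an element outside $Z(C\ell_2)$ is a two-sided unit. The only care needed is to check that $\be_0$ really lies outside $Z(C\ell_2)$ (so it is an admissible choice of $u$) and to apply left-cancellation by the \emph{invertible} element $u$, rather than trying to cancel an arbitrary (possibly zero-divisor) factor.
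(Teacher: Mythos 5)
Your proof is correct. The paper offers no proof at all for this proposition (it is stated as ``obvious''), and your argument --- taking $u=\be_0$ for sufficiency, and for necessity using centrality to rewrite $au=ub$ as $u(a-b)=0$ and then left-cancelling the invertible $u$ --- is exactly the justification the authors are implicitly relying on.
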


\begin{thm}\label{thmsim}
	Two elements $a,b \in C\ell_2-\br$ are similar if and only if
	$$Cre(a)=Cre(b),\,\, G(a)=G(b).$$
\end{thm}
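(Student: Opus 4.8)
The plan is to deduce both implications from Lemma~\ref{similar}, which already produces explicit similarity transformations carrying any non-real element to a normal form determined by its real part and the sign of $G$. First I would record that similarity is an equivalence relation on $C\ell_2$: if $u\in C\ell_2-Z(C\ell_2)$ then $H_u\neq0$, so $au=ub$ is the same as $u^{-1}au=b$; reflexivity is clear, symmetry uses $u^{-1}$ (and $H_{u^{-1}}=H_u^{-1}\neq0$, so $u^{-1}\notin Z(C\ell_2)$), and transitivity uses the product of the two transforming elements together with $H_{uv}=H_uH_v\neq0$ from Proposition~\ref{pro1.2}(1).

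For the ``if'' direction, suppose $Cre(a)=Cre(b)=:c$ and $G(a)=G(b)$, with $a,b\in C\ell_2-\br$. Then $a$ and $b$ share the value of $G$, in particular its sign, so Lemma~\ref{similar} makes both $a$ and $b$ similar to one and the same element: $c+\sqrt{G(a)}\,\be_2$ if $G(a)>0$, $c+\sqrt{-G(a)}\,\be_3$ if $G(a)<0$, and $c+\be_2+\be_3$ if $G(a)=0$. By symmetry and transitivity of similarity, $a$ and $b$ are similar.

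For the ``only if'' direction, suppose $au=ub$ with $u\in C\ell_2-Z(C\ell_2)$, so $H_u\neq0$, $u^{-1}=\bar u/H_u$ exists, and $u^{-1}au=b$. Applying the ring homomorphism $L$ (Theorem~\ref{ringiso}) and using $L(u^{-1})L(u)=L(\be_0)=E_4$, we get $L(b)=L(u)^{-1}L(a)L(u)$, so $L(a)$ and $L(b)$ are similar matrices. Comparing traces and reading the diagonal of $L(a)$ from (\ref{la}) gives $4\,Cre(a)=\mathrm{tr}\,L(a)=\mathrm{tr}\,L(b)=4\,Cre(b)$, hence $Cre(a)=Cre(b)$. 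Moreover $H$ is a similarity invariant: from $H_{xy}=H_xH_y$ and $u^{-1}u=\be_0$ one gets $H_{u^{-1}}=H_u^{-1}$, whence $H_b=H_{u^{-1}}H_aH_u=H_a$. Since $H_a=a_0^2-G(a)$ by Definition~\ref{defi1.2}, the identities $Cre(a)=Cre(b)$ and $H_a=H_b$ together force $G(a)=G(b)$. (Alternatively, by Proposition~\ref{pro5.1} the characteristic polynomial of $L(a)$ is $\big((\lambda-a_0)^2-G(a)\big)^2$, which matrix similarity preserves, and comparing coefficients yields both equalities at once.)

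I do not anticipate a genuine obstacle: the construction of the transforming elements is the hard part and is already contained in Lemma~\ref{similar}, while the invariance arguments are short consequences of the homomorphism property of $L$ and of $H_{ab}=H_aH_b$. The only points needing care are checking that the auxiliary elements $u^{-1}$ and $uv$ stay outside $Z(C\ell_2)$ where they are used, and observing that the hypothesis $a,b\notin\br$ does not exclude the case $G(a)=G(b)=0$, since $G(a)=0$ is compatible with $Cim(a)\neq0$.
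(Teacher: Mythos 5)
Your proposal is correct and follows essentially the same route as the paper: sufficiency by reducing both elements to the common normal form of Lemma~\ref{similar}, and necessity by showing $Cre$ and $H$ (hence $G$) are similarity invariants via $H_{ab}=H_aH_b$. The only cosmetic difference is that you obtain $Cre(a)=Cre(b)$ from the trace of $L(a)$ under matrix similarity, whereas the paper appeals to $Cent(C\ell_2)=\br$; your version is, if anything, slightly more explicit.
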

\begin{proof} By Lemma\ref{similar}, the sufficiency can be proved.

Now to prove the necessity. If $a=a_0+a_1\be_1+a_2\be_2+a_3\be_3,\,b=b_0+b_1\be_1+b_2\be_2+b_3\be_3 \in C\ell_2$ are similar, then there exist an element $u \in C\ell_2-Z(C\ell_2)$ such that $u^{-1}au=b$, i.e. $u^{-1}(Cre(a)+Cim(a))u=Cre(b)+Cim(b)$. By Proposition
\ref{subalgebra}(2), we have $Cre(a)=Cre(b)$, thus $a_0=b_0$. Then
$$a\overline{a}u\overline{u}=au\overline{au}=ub\overline{ub}=b\overline{b}u\overline{u}.$$
Hence $$(a\overline{a}-b\overline{b})u\overline{u}=0.$$
Since $u\overline{u}=H_u\neq0$, we have $a\overline{a}=b\overline{b}$. Then $$a_0^2-a_1^2-a_2^2+a_3^2=b_0^2-b_1^2-b_2^2+b_3^2.$$  Thus $G(a)=G(b)$.
\end{proof}

Let $U=diag(1,-1,-1,-1)$, we have
\begin{equation}\label{eq8}ax=\overline{x}b\end{equation}
is equivalent to $\big(L(a)-R(b)U\big)\overrightarrow{x}=0.$
\begin{pro}\label{propsab}
 Let $a,b\in C\ell_2-\{0\},\,W(a,b)=L(a)-R(b)U$. Then  the eigenvalues of $W(a,b)$ are
$$\lambda_{1,2}=a_0 \pm \sqrt{\big(G(a)+H_b\big)},$$
$$\lambda_{3,4}=a_0+b_0\pm \sqrt{\big(G(a)+G(b)+2(-a_1b_1-a_2b_2+a_3b_3)\big)}.$$
and
$$\det(W(a,b))=(H_a-H_b)\big(H_a+H_b+2(a_0b_0+a_1b_1+a_2b_2-a_3b_3)\big)=(H_a-H_b)H_{\overline{a}+b}.$$	
 \end{pro}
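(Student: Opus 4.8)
The plan is to mimic the eigenvalue computations of Proposition~\ref{pro5.1} and Proposition~\ref{proped}, but now for the matrix $W(a,b)=L(a)-R(b)U$, which encodes the conjugate-linear equation $ax=\overline{x}b$. First I would write down $W(a,b)$ explicitly as a $4\times4$ real matrix using \eqref{la}, \eqref{ra} and $U=\mathrm{diag}(1,-1,-1,-1)$; the product $R(b)U$ just negates the last three columns of $R(b)$. The characteristic polynomial $\det(\lambda E_4-W(a,b))$ is a degree-4 polynomial in $\lambda$, and the claim is that it factors as $\bigl((\lambda-a_0)^2-(G(a)+H_b)\bigr)\bigl((\lambda-a_0-b_0)^2-(G(a)+G(b)+2(-a_1b_1-a_2b_2+a_3b_3))\bigr)$. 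I would verify this factorization by direct expansion; the two quadratic factors then give the four eigenvalues $\lambda_{1,2},\lambda_{3,4}$ as stated.

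For the determinant formula I would set $\lambda=0$ in the characteristic polynomial, so $\det(W(a,b))=\det(-W(a,b))$ (order $4$, even, so no sign issue) equals the product of the two constant terms, namely $\bigl(a_0^2-(G(a)+H_b)\bigr)\bigl((a_0+b_0)^2-(G(a)+G(b)+2(-a_1b_1-a_2b_2+a_3b_3))\bigr)$. Recalling $H_a=a_0^2-a_1^2-a_2^2+a_3^2$ and $G(a)=a_1^2+a_2^2-a_3^2$, we get $a_0^2-G(a)=H_a$, so the first factor is $H_a-H_b$. For the second factor, $(a_0+b_0)^2-G(a)-G(b)-2(-a_1b_1-a_2b_2+a_3b_3)=a_0^2-G(a)+b_0^2-G(b)+2a_0b_0+2a_1b_1+2a_2b_2-2a_3b_3=H_a+H_b+2(a_0b_0+a_1b_1+a_2b_2-a_3b_3)$. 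Finally I would identify this last expression with $H_{\overline a+b}$: writing $c=\overline a+b=(a_0+b_0)+(b_1-a_1)\be_1+(b_2-a_2)\be_2+(b_3-a_3)\be_3$ and applying the formula for $H_c$ from Definition~\ref{defi1.2}, a short computation gives $H_{\overline a+b}=H_a+H_b+2(a_0b_0+a_1b_1+a_2b_2-a_3b_3)$, which completes the proof.

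Alternatively, and more cleanly, the determinant part can be obtained without the full characteristic polynomial: since $L(\overline a)=DL(a)^TD$ by Proposition~\ref{pro2.2} and, analogously, one expects a relation tying $R(b)U$ to $W$ under a suitable conjugation, one can try to compute $\det\bigl(W(a,b)W'\bigr)$ for a well-chosen companion matrix $W'$ so that the product becomes $L$ or $R$ of a single Clifford element whose $H$-value is already known; this is the same trick used to prove $\det(L(a))=H_a^2$. I would attempt this route first, because it avoids expanding a $4\times4$ characteristic determinant by hand, and fall back on brute-force expansion only if no clean companion presents itself.

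The main obstacle I anticipate is purely computational: verifying that the $4\times4$ characteristic determinant of $W(a,b)$ really splits into the asserted product of two quadratics. Unlike the situation in Proposition~\ref{pro5.1}, the matrix $W(a,b)$ is not symmetric and mixes $a$ and $b$ in a non-symmetric way because of the factor $U$, so the cross terms must be tracked carefully; in particular one must confirm that the degree-$3$ and degree-$1$ coefficients in $\lambda$ match what the claimed factorization predicts (the $\lambda^3$ coefficient should be $-2a_0-(a_0+b_0)$, i.e.\ $-\mathrm{tr}\,W(a,b)$, which is an easy sanity check). Once the factorization is confirmed, reading off the eigenvalues and specializing to $\lambda=0$ for the determinant are routine, and the final rewriting as $(H_a-H_b)H_{\overline a+b}$ is just an application of the definitions.
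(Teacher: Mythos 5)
The paper states Proposition~\ref{propsab} without any proof (it is followed only by numerical examples), so there is no argument of the authors' to compare against; your direct computation is the natural route and it is correct. Writing out $W(a,b)$ entrywise (first row $a_0-b_0,\ a_1+b_1,\ a_2+b_2,\ -a_3-b_3$; remaining diagonal entries $a_0+b_0$), expanding the characteristic polynomial, and checking that it equals $\bigl((\lambda-a_0)^2-(G(a)+H_b)\bigr)\bigl((\lambda-a_0-b_0)^2-(G(a)+G(b)+2(-a_1b_1-a_2b_2+a_3b_3))\bigr)$ does verify the claim, and your subsequent algebra reducing the constant term to $(H_a-H_b)H_{\overline a+b}$ is exactly right; the factorization is also consistent with all four of the paper's worked examples. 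Two small remarks: your trace sanity check is mis-stated --- the $\lambda^3$ coefficient of the claimed factorization is $-2a_0-2(a_0+b_0)=-(4a_0+2b_0)=-\mathrm{tr}\,W(a,b)$, not $-2a_0-(a_0+b_0)$ --- and your proposed ``companion matrix'' shortcut in the spirit of $\det(L(a))=H_a^2$ does not obviously materialize here (conjugating $ax-\overline{x}b$ does not cancel the cross terms cleanly), so the fallback brute-force expansion is in practice the proof.
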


\begin{lem}\label{lem1}
Let $a,b\in C\ell_2-\{0\},\,W(a,b)=L(a)-R(b)U$. When $\det(W(a,b))=0$, we have one of the following conditions.
\begin{itemize}
	\item [(1)] $H_a=H_b$, $\overline{a}+b=0$ and rank$(W(a,b))=1$.
\item [(2)]  $H_a=H_b$, $H_{\overline{a}+b}\neq 0$ and rank $(W(a,b))=3$.
\item [(3)]  $H_a=H_b$, $0\neq \overline{a}+b\in Z(C\ell_2)$ and rank$(W(a,b))=3$.
\item [(4)] $H_a\neq H_b$, $0\neq \overline{a}+b\in Z(C\ell_2)$ and rank $(W(a,b))=3$.
\end{itemize}

\end{lem}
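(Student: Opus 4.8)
The engine is the determinant identity $\det W(a,b) = (H_a - H_b)\,H_{\overline{a}+b}$ from Proposition~\ref{propsab}. It gives $\det W(a,b) = 0$ precisely when $H_a = H_b$ or $\overline{a}+b \in Z(C\ell_2)$, and the four branches are obtained by asking in addition whether $\overline{a}+b$ equals $0$, is a nonzero zero divisor, or is invertible. The one preliminary remark is that $\overline{a}+b = 0$ forces $b = -\overline{a}$ and hence $H_b = H_{\overline{a}} = H_a$; so the case $\overline{a}+b = 0$ always lives under $H_a = H_b$ (it is branch (1), and is impossible in branch (4)), and (1)--(4) exactly cover the locus $\det W = 0$. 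Everything beyond this is the rank value in each branch.

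Next I would write $W = L(a) - R(b)U$ out from Proposition~\ref{promat} and record its two relevant features, with $p := \overline{a}+b$: the last three rows of $W$ coincide with the last three rows of $-R(p)U$, so $\vec{x}$ is killed by those rows iff $Cim(\overline{x}\,p) = 0$, i.e.\ $\overline{x}\,p \in \br$; and the first row of $W$ is the first row of $L(a - \overline{b})$, i.e.\ the functional $\vec{x} \mapsto Cre\big((a-\overline{b})x\big)$. Hence $\ker W = \{x : \overline{x}\,p \in \br \text{ and } Cre\big((a-\overline{b})x\big) = 0\}$. Branch (1) is then immediate: with $b = -\overline{a}$ the equation $ax = \overline{x}b$ reads $ax = -\overline{ax}$, i.e.\ $Cre(ax) = 0$, a single linear equation whose coefficient row is the first row of $L(a)$, nonzero because $a \ne 0$; so $\dim\ker W = 3$ and $\mathrm{rank}\,W = 1$.

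For (2)--(4) it remains to prove $\mathrm{rank}\,W = 3$, and since $\det W = 0$ it suffices to prove $\mathrm{rank}\,W \ge 3$. In branch (2), $p$ is invertible, so $R(p)U$ is invertible, its last three rows are independent, hence so are those of $W$, giving $\mathrm{rank}\,W \ge 3$ and thus $\mathrm{rank}\,W = 3$. In branches (3) and (4), $p$ is a nonzero zero divisor; via the algebra isomorphism $\varphi$, $\varphi(p)$ is a nonzero matrix of rank $1$ (as $\det\varphi(p) = H_p = 0$), so $\mathrm{rank}\,R(p) = \mathrm{rank}\,R(p)U = 2$, and a short check shows the last three rows of $-R(p)U$, namely $(-p_1,p_0,-p_3,p_2),(-p_2,p_3,p_0,-p_1),(-p_3,p_2,-p_1,p_0)$, are never all proportional when $p \ne 0$, so they span a $2$-dimensional space. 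Thus $\mathrm{rank}\,W = 3$ iff the first row of $W$ is not in their span, equivalently iff $\vec{x} \mapsto Cre\big((a-\overline{b})x\big)$ does not vanish identically on the $2$-dimensional space $S = \{x : \overline{x}\,p \in \br\}$. To control this I would use the identity $H_{a-\overline{b}} + H_{\overline{a}+b} = 2(H_a + H_b)$ --- a consequence of $Cre(xy) = Cre(yx)$ (via $Cre(\cdot) = \tfrac12\mathrm{tr}\,\varphi(\cdot)$) --- so that in (3)--(4) we have $H_{a-\overline{b}} = 2(H_a+H_b)$; then, using $\varphi(\overline{x}) = \mathrm{adj}\,\varphi(x)$ to describe $\varphi(S)$ as a space of rank-$\le 1$ matrices sharing one common ``direction,'' one checks that $Cre\big((a-\overline{b})x\big)$ fails to vanish on all of $S$ as soon as $a - \overline{b}$ is invertible; and under the hypotheses of (3) ($H_a = H_b$) or (4) ($H_a \ne H_b$), $H_{a-\overline{b}} = 2(H_a+H_b)$ is arranged so that $a - \overline{b}$ is invertible in the principal sub-cases, with the remaining sub-cases handled by a separate direct argument, giving $\mathrm{rank}\,W = 3$.

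The main obstacle is exactly this last point: passing from $\det W = 0$ to $\mathrm{rank}\,W = 3$ in branches (3) and (4), i.e.\ showing the first row of $W$ is not absorbed by the last three when $p = \overline{a}+b$ is a nonzero zero divisor. This forces a careful split according to whether $a - \overline{b}$ is invertible, a nonzero zero divisor, or zero, and according to the sign of $H_a + H_b$; the boundary sub-cases, where $H_{a-\overline{b}} = 2(H_a+H_b)$ degenerates and $a - \overline{b}$ stops being invertible, are the ones demanding real care, since there a brute-force hunt for a nonvanishing $3\times 3$ minor of $W$ gives the least insight.
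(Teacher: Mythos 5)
The paper itself states this lemma without any proof --- only numerical examples follow it --- so there is no argument of the authors' to compare against; your proposal has to stand on its own. Your structural reduction is correct and well chosen: the last three rows of $W$ are indeed the last three rows of $-R(\overline{a}+b)U$, the first row is the first row of $L(a-\overline{b})$, and your treatment of branches (1) and (2) is complete. For branch (4) you are working harder than necessary: $p=\overline{a}+b$ always lies in $S=\{x:\overline{x}p\in\br\}$ because $\overline{p}p=H_p\in\br$, and $Cre\big((a-\overline{b})(\overline{a}+b)\big)=H_a-H_b$, so when $H_a\neq H_b$ the first row cannot vanish on $S$ and the rank is $3$ with no sub-case analysis at all.

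The genuine gap is in branch (3), and it is not one you could have closed, because the statement is false there. Take $a=\tfrac12(1+\be_1-\be_2-\be_3)$ and $b=\tfrac12(1+\be_1+\be_2-\be_3)$. Then $a,b\neq0$, $H_a=H_b=0$, and $\overline{a}+b=1+\be_2$ is a nonzero zero divisor, so this is exactly branch (3); but the rows of $W(a,b)$ are $(0,1,0,1)$, $(0,1,0,1)$, $(-1,0,1,0)$, $(0,1,0,1)$, so $\mathrm{rank}\,W(a,b)=2$, not $3$. Equivalently, $ax=\overline{x}b$ has the two independent solutions $x=1+\be_2$ and $x=\be_1-\be_3$, for each of which both sides are zero. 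This counterexample sits precisely in the sub-case you defer to ``a separate direct argument'': here $H_{a-\overline{b}}=2(H_a+H_b)-H_{\overline{a}+b}=0$, so $a-\overline{b}=\be_1-\be_3$ is not invertible and your invertibility argument does not apply. Branches (1), (2) and (4) are salvageable by your method (with the shortcut above for (4)), but branch (3) must be weakened to $\mathrm{rank}\,W(a,b)\in\{2,3\}$ --- the paper's own example with $a=1+\be_1+\be_3$, $b=\be_3$ shows rank $3$ does occur there --- and any plan that promises rank $3$ throughout branch (3) cannot be completed.
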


We provide some examples.
\begin{exam}
Let $a=1+\be_1+\be_2+\be_3,\, b=-1+\be_1+\be_2+\be_3$. Then $\overline{a}+b=0,\,H_a=H_b=0,\,G(a)=G(b)=1$, $$W(a,b)=\left(\begin{array}{cccc}
	2&2  &2  &-2  \\
	0 &0   &0  & 0  \\
	0 &0   &0  & 0   \\
	0 &0   &0  & 0
	\end{array}\right).$$ Thus $\mathrm{rank}(W(a,b))=1$, the solutions of  $\det(\lambda E_4-W(a,b))=0$ are $$\lambda_1=2,\,\,\lambda_2=\lambda_3=\lambda_4=0.$$
\end{exam}

\begin{exam}
Let $a=2+3\be_1+4\be_2+5\be_3,\,b=5+3\be_1+4\be_2+2\be_3$. Then $H_{\overline{a}+b}=58,\,H_a=H_b=4,\,G(a)=0,\,G(b)=21$, $$W(a,b)=\left(\begin{array}{cccc}
	-3&6 &8  &-7  \\
	0 &7   &3  &0  \\
	0 &-3   &7  &0   \\
	3 &0   &0  & 7
	\end{array}\right).$$ Thus $\mathrm{rank}(W(a,b))=3$, the solutions of $\det(\lambda E_4-W(a,b))=0$ are $$\lambda_1=0,\,\,\lambda_2=4,\,\,\lambda_3=7+3\bi,\,\,\lambda_4=7-3\bi.$$
\end{exam}

\begin{exam}
Let $a=1+\be_1+\be_3,\,b=\be_3$. Then $H_{\overline{a}+b}=0,\,H_a=H_b=1,\,G(a)=0,\,G(b)=-1$, $$W(a,b)=\left(\begin{array}{cccc}
	1&1  &0  &-2  \\
	1 &1   &0  &0  \\
	0 &0   &1  & 1   \\
	0 &0   &1  & 1
	\end{array}\right).$$ Thus $\mathrm{rank}(W(a,b))=3$, the solutions of $\det(\lambda E_4-W(a,b))=0$ are $$\lambda_1=\lambda_3=2,\,\,\lambda_2=\lambda_4=0.$$
\end{exam}

\begin{exam}
 Let $a=1-\be_1+2\be_2-2\be_3,\,b=6+7\be_1+3\be_2+2\be_3$. Then $H_{\overline{a}+b}=0,\,H_a=0,\,H_b=-18,\,G(a)=1,\,G(b)=54$,  $$W(a,b)=\left(\begin{array}{cccc}
	-5&6  &5  &0  \\
	-8 &7   &-4  &1 \\
	-1 &4   &7  & -8  \\
	-4 &1   &-8  & 7
	\end{array}\right).$$ Thus $\mathrm{rank}(W(a,b))=3$, the solutions of $\det(\lambda E_4-W(a,b))=0$ are $$\lambda_1=1+\sqrt{17}\bi,\,\,\lambda_2=1-\sqrt{17}\bi,\,\,\lambda_3=14,\,\,\lambda_4=0.$$
\end{exam}

\begin{pro}\label{pro5.8}
Let $a,b\in C\ell_2-\{0\}$, $H_a=H_b$. Then there exist an element $u \in C\ell_2-Z(C\ell_2)$ such that $au=\overline{u}b$.
\end{pro}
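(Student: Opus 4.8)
The plan is to reduce the equation to a homogeneous linear system and then to locate a solution lying off the zero‑divisor cone. As noted before Proposition \ref{propsab}, $au=\overline{u}b$ is equivalent to $W(a,b)\overrightarrow{u}=\overrightarrow{0}$ with $W(a,b)=L(a)-R(b)U$, $U=diag(1,-1,-1,-1)$. Since $H_a=H_b$, Proposition \ref{propsab} gives $\det W(a,b)=(H_a-H_b)H_{\overline{a}+b}=0$, so a nonzero solution exists; the content of the statement is to produce one with $H_u\neq 0$, i.e.\ $u\notin Z(C\ell_2)$.

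The first key step is the observation that $u_0:=\overline{a}+b$ \emph{always} solves $au=\overline{u}b$ when $H_a=H_b$. Indeed $au_0=a\overline{a}+ab=H_a+ab$, while $\overline{u_0}\,b=(a+\overline{b})b=ab+\overline{b}b=ab+H_b$, and these agree exactly because $H_a=H_b$. Consequently, whenever $H_{\overline{a}+b}\neq 0$ one may simply take $u=\overline{a}+b\in C\ell_2-Z(C\ell_2)$. Invoking Lemma \ref{lem1} to organize the possibilities (recall $\det W(a,b)=0$ here), this disposes of case (2).

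The second step handles case (1) of Lemma \ref{lem1}, where $\overline{a}+b=0$, i.e.\ $b=-\overline{a}$. Then $au=\overline{u}b=-\overline{u}\,\overline{a}=-\overline{au}$, which is equivalent to $Cre(au)=0$. Since $a\neq 0$, the $\br$-linear functional $u\mapsto Cre(au)$ on $C\ell_2\cong\br^{4}$ is nonzero, so its kernel $S$ is a $3$-dimensional subspace. The quadratic form $u\mapsto H_u=u_0^{2}-u_1^{2}-u_2^{2}+u_3^{2}$ has signature $(2,2)$, hence no totally isotropic subspace has dimension larger than $2$; therefore $S\not\subseteq Z(C\ell_2)$, and any $u\in S$ with $H_u\neq 0$ does the job. (Alternatively, one can exhibit such a $u$ explicitly from the coefficients of $a$, in the spirit of Propositions \ref{pro5.3}--\ref{pro5.5}; the subcase $a\in\br$, where the condition becomes $Cre(u)=0$ and $u=\be_1$ works, is instructive.)

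The delicate case, and the one I expect to be the main obstacle, is case (3) of Lemma \ref{lem1}: $0\neq\overline{a}+b\in Z(C\ell_2)$. Here the natural candidate $u_0=\overline{a}+b$ is itself a zero divisor, and since $\mathrm{rank}\,W(a,b)=3$ the solution space of $au=\overline{u}b$ is one‑dimensional, hence equal to $\br\,u_0$; so the strategy above produces nothing usable, and one needs either a genuinely different mechanism or an extra hypothesis excluding this configuration. A possible route is to pass to the $2\times2$ real matrix model $\varphi$ of Section \ref{mpinvsec}: one checks that $\varphi(\overline{u})=\mathrm{adj}(\varphi(u))$, so $au=\overline{u}b$ becomes $\varphi(a)\varphi(u)=\mathrm{adj}(\varphi(u))\,\varphi(b)$, and for invertible $\varphi(u)$ this rewrites as $\varphi(u)\varphi(a)\varphi(u)=\det(\varphi(u))\,\varphi(b)$; one then searches for an invertible solution of scaled‑square‑root type, exploiting $\det\varphi(a)=\det\varphi(b)$ (recall $\det\varphi(a)=H_a$). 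Deciding whether such a $u$ exists in case (3) — equivalently, whether case (3) can actually occur under $H_a=H_b$ with $a,b\neq 0$ — is the crux that the argument must settle.
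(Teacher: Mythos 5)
Your treatment of cases (1) and (2) of Lemma~\ref{lem1} is correct and matches the paper's proof in substance: the paper likewise verifies that $u=\overline{a}+b$ solves $au=\overline{u}b$ whenever $H_a=H_b$ and uses it directly when $H_{\overline{a}+b}\neq 0$, and in the case $\overline{a}+b=0$ it exhibits explicit solutions $u_1=a_3\be_1+a_1\be_3$, $u_2=a_3\be_2+a_2\be_3$, $u_3=a_3+a_0\be_3$, at least one of which is a non-zero-divisor for any $a\neq 0$. Your alternative for that case --- rewriting the equation as $Cre(au)=0$ and noting that the $3$-dimensional kernel of a nonzero linear functional cannot be totally isotropic for the signature $(2,2)$ form $H$ --- is cleaner and avoids the explicit case-by-case construction.

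The obstacle you isolate in case (3) is not a defect of your argument but of the proposition itself, and the paper's proof silently skips it. Case (3) does occur under the stated hypotheses: the paper's own example $a=1+\be_1+\be_3$, $b=\be_3$ has $a,b\neq 0$, $H_a=H_b=1$, and $0\neq\overline{a}+b=1-\be_1\in Z(C\ell_2)$. There $\mathrm{rank}\,W(a,b)=3$, so the solution set of $au=\overline{u}b$ is exactly the line $\br(\overline{a}+b)$, every element of which is a zero divisor; hence \emph{no} admissible $u$ exists and Proposition~\ref{pro5.8} is false as stated. The hypothesis must be strengthened to ``$\overline{a}+b=0$ or ($H_a=H_b$ and $H_{\overline{a}+b}\neq0$)'', which is precisely the pair of conditions recorded in Theorem~\ref{thmcsim}; with that correction your cases (1) and (2) constitute a complete proof and the problematic case (3) is excluded by hypothesis rather than needing to be resolved.
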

\begin{proof}If $\overline{a}+b\neq0$, let$u=\overline{a}+b$. Since $H_a=H_b$, then $au=\overline{u}b$. By Lemma\ref{lem1}, one of the solutions to the equation $au=\overline{u}b$ is $$u=(\overline{a}+b)y,\,y\in \br.$$
$H_u\neq0$ if and only if $H_{\overline{a}+b}\neq0.$

If $\overline{a}+b=0$, we have $a_0=-b_0,\,a_1=b_1,\,a_2=b_2,\,a_3=b_3$, that is $b=-a_0+a_1\be_1+a_2\be_2+a_3\be_3$. Consider the equation
\begin{equation}\label{eq9} au=\overline{u}b.\end{equation}
It is easy to verify that $$u_1=a_3\be_1+a_1\be_3$$ is a solution of equation(\ref{eq9}), if $a_1^2-a_3^2\neq0$. $$u_2=a_3\be_2+a_2\be_3$$ is a solution of equation(\ref{eq9}), if $a_2^2-a_3^2\neq0$. $$u_3=a_3+a_0\be_3$$ is a solution of equation(\ref{eq9}), if $a_0^2+a_3^2\neq0$.
\end{proof}

Obviously, if $a=b=0$, there exist $x \in C\ell_2-Z(C\ell_2)$ such that $ax=\overline{x}b$. Thus $a,b$ are pseudosimilar.

\begin{thm}\label{thmcsim}
	Two elements $a,b \in C\ell_2-\{0\}$ are  pseudosimilar if and only if we have one of the following two conditions
	$$(1)\quad   \overline{a}+b=0;\  \quad  (2)\quad  H_a=H_b, H_{\overline{a}+b}\neq 0.$$
\end{thm}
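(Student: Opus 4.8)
The plan is to convert pseudosimilarity into a statement about the null space of the real $4\times 4$ matrix $W(a,b)=L(a)-R(b)U$, and then read off the two conditions from Proposition~\ref{propsab} and Lemma~\ref{lem1}. Recall that $ax=\overline{x}b$ is equivalent to $W(a,b)\overrightarrow{x}=\overrightarrow{0}$ and that $\det W(a,b)=(H_a-H_b)\,H_{\overline{a}+b}$. Thus $a,b$ are pseudosimilar precisely when $W(a,b)$ has a null vector $\overrightarrow{u}$ with $H_u\neq 0$, and the whole argument amounts to deciding when such a non-zero-divisor null vector exists.

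For necessity I would start from $au=\overline{u}b$ with $H_u\neq0$, apply the map $H$, and use $H_{cd}=H_cH_d$ (Proposition~\ref{pro1.2}) together with $H_{\overline{u}}=u\overline{u}=H_u$ to get $H_aH_u=H_uH_b$, hence $H_a=H_b$. This already forces $\det W(a,b)=0$, so one of the alternatives of Lemma~\ref{lem1} applies, and alternative (4) is impossible since $H_a=H_b$. The crux is to exclude alternative (3), where $0\neq\overline{a}+b\in Z(C\ell_2)$ and $\mathrm{rank}(W(a,b))=3$: here I would observe, as in the proof of Proposition~\ref{pro5.8}, that $a(\overline{a}+b)=H_a+ab=H_b+ab=\overline{(\overline{a}+b)}\,b$, so $\overrightarrow{\overline{a}+b}$ lies in the one-dimensional null space of $W(a,b)$ and therefore spans it; every solution of $ax=\overline{x}b$ is then a real multiple of $\overline{a}+b$, hence a zero divisor, contradicting $H_u\neq0$. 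Thus only alternatives (1) and (2) of Lemma~\ref{lem1} survive, which are exactly conditions (1) and (2) of the theorem.

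For sufficiency, assume (2): then $H_{\overline{a}+b}\neq0$, so $\overline{a}+b\notin Z(C\ell_2)$, and the identity above (using $H_a=H_b$) shows $u=\overline{a}+b$ solves $au=\overline{u}b$ with $H_u=H_{\overline{a}+b}\neq0$, giving pseudosimilarity. If instead (1) holds, then $b=-\overline{a}$, so $H_b=H_a$, and I would invoke the explicit solutions from the proof of Proposition~\ref{pro5.8}, namely $u_1=a_3\be_1+a_1\be_3$, $u_2=a_3\be_2+a_2\be_3$, $u_3=a_3+a_0\be_3$, whose moduli are $H_{u_1}=a_1^2-a_3^2$, $H_{u_2}=a_2^2-a_3^2$, $H_{u_3}=a_0^2+a_3^2$. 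Were all three to vanish we would obtain $a_0=a_3=0$ and then $a_1=a_2=0$, i.e.\ $a=0$, contradicting $a\neq0$; hence some $u_i$ is a non-zero-divisor solution, and $a,b$ are pseudosimilar.

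I expect the main obstacle to be alternative (3) in the necessity argument: one needs both that $\overline{a}+b$ itself solves $ax=\overline{x}b$ and that $\mathrm{rank}(W(a,b))=3$ collapses the solution set to the span of $\overline{a}+b$, so that no invertible $u$ can occur there; without the rank information from Lemma~\ref{lem1} this step would fail. The remaining parts are routine manipulations with the modulus map $H$ and direct appeals to Proposition~\ref{propsab} and Proposition~\ref{pro5.8}.
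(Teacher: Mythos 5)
Your proposal is correct and follows the same route as the paper: reduce $au=\overline{u}b$ to the null space of $W(a,b)=L(a)-R(b)U$, obtain $H_a=H_b$ by applying the multiplicative map $H$ to $au=\overline{u}b$, and then invoke Lemma~\ref{lem1} and Proposition~\ref{pro5.8}. In fact your write-up is more complete than the paper's own proof. The paper's necessity argument stops after deriving $H_a=H_b$ and never excludes the case $H_a=H_b$ with $0\neq\overline{a}+b\in Z(C\ell_2)$, which is alternative (3) of Lemma~\ref{lem1} and is exactly what separates the theorem's condition (2) from the weaker hypothesis of Proposition~\ref{pro5.8} (whose statement, read literally, omits the needed restriction $\overline{a}+b=0$ or $H_{\overline{a}+b}\neq0$). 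Your observation that in that case $\mathrm{rank}(W(a,b))=3$ while $\overrightarrow{\overline{a}+b}$ is a nonzero null vector, so the solution set of $ax=\overline{x}b$ is exactly $\br(\overline{a}+b)$ and consists entirely of zero divisors, is precisely the missing step; it is corroborated by the paper's example $a=1+\be_1+\be_3$, $b=\be_3$, where $H_a=H_b$ yet no invertible $u$ exists. Your sufficiency argument in case (1), checking that $u_1=a_3\be_1+a_1\be_3$, $u_2=a_3\be_2+a_2\be_3$, $u_3=a_3+a_0\be_3$ cannot all lie in $Z(C\ell_2)$ unless $a=0$, makes explicit what the paper leaves implicit in Proposition~\ref{pro5.8}. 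No gaps.
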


\begin{proof} By Proposition\ref{pro5.8}, the sufficiency can be proved.
 Now to prove the necessity. If $a,b$ are pseudosimilar, then there exist an element $u\in  C\ell_2-Z(C\ell_2)$ such that $au=\overline{u}b$. Thus $$u\overline{u}a\overline{a}=au\overline{au}=\overline{u}b\overline{b}u=\overline{u}u\overline{b}b.$$ that is $H_u(H_a-H_b)=0$. Since $H_u\neq0$, we have $H_a=H_b$.
\end{proof}

\vspace{2mm}

{\bf Acknowledgements.}\quad This work is supported by Natural Science Foundation of China (11871379), Key project of  National Natural Science Foundation  of Guangdong Province Universities (2019KZDXM025)

\end{document}